\definecolor{darkblue}{rgb}{0,0,.75}
\newcommand{\statrv}{S}
\newcommand{\statval}{s}
\newcommand{\statdomain}{\mc{S}}
\newcommand{\stepsize}{\alpha}
\newcommand{\opt}{^\star}
\newcommand{\subopt}{_\star}
\newcommand{\growfunc}{\mathsf{G}_{\rm big}}
\providecommand{\steppow}{\beta}
\newcommand{\lipgrad}{L}
\newcommand{\weakconvexfunc}{\rho}
\newcommand{\aProx}{\textsc{aProx}\xspace}
\newcommand{\stationary}{_{\rm stat}}
\newcommand{\prox}{{\rm prox}}
\begin{document}
	
\begin{center}
	{\Large The importance of better models in stochastic optimization}
	
	\vspace{.3cm}
	
	\begin{tabular}{cc}
		Hilal Asi & John C.\ Duchi \\
		\texttt{asi@stanford.edu} &
		\texttt{jduchi@stanford.edu} \\
		\multicolumn{2}{c}{
			Stanford University
		} \\
	\end{tabular}
\end{center}


\begin{abstract}
  Standard stochastic optimization methods are brittle, sensitive to
  stepsize choices and other algorithmic parameters, and they exhibit
  instability outside of well-behaved families of objectives. To address
  these challenges, we investigate models for stochastic optimization and
  learning problems that exhibit better robustness to problem families and
  algorithmic parameters. With appropriately accurate models---which we call
  the \aProx family~\cite{AsiDu18}---stochastic methods can be made stable,
  provably convergent and asymptotically optimal; even modeling
  that the objective is nonnegative is sufficient for this
  stability. We extend these results beyond convexity to weakly convex
  objectives, which include compositions of convex losses with smooth
  functions common in modern machine learning applications. We highlight the
  importance of robustness and accurate modeling with a careful experimental
  evaluation of convergence time and algorithm sensitivity.
\end{abstract}




\section{Introduction}
\label{sec:intro}

A major challenge in stochastic optimization---the algorithmic workhorse for
much of modern statistical and machine learning applications---is in setting
algorithm parameters (or hyper-parameter tuning).
The challenge arises because most algorithms are sensitive to
their parameters, and different applications require different tuning. This
sensitivity causes multiple issues. It results in thousands to
millions of wasted engineer and computational hours. It also leads to a
lack of clarity in research and development of algorithms---in claiming
that one algorithm is better than another, it is unclear if this is due to
judicious choice of dataset, judicious parameter settings,
or if indeed the algorithm
does exhibit new desirable behavior. Consequently, in this paper we have two
main thrusts: first, by using better models than
naive first-order models common in stochastic gradient methods,
we develop families of stochastic optimization
algorithms that are provably more robust to input parameter choices, with
several corresponding optimality properties. Second, we argue for a
different type of experimental evidence in evaluating stochastic
optimization methods, where one jointly evaluates convergence speed and
sensitivity of the methods.

The wasted computational and engineering energy is especially pronounced in
deep learning, where engineers use models with millions of parameters,
requiring a days to weeks to train single models.  To get a sense of this
energy use, we consider a few recent papers that we view as exemplars of
this broader trend: in searching for optimal neural network architectures
and hyperparameters, the papers~\cite{ZophVaShLe18, RealAgHuLe18, ZophLe16}
used approximately 2000, 3150, 22000 GPU-days of computation,
respectively. The paper~\cite{CollinsSoSu16} uses approximately 750000
CPU days in its parameter search.  To put this in
perspective, assuming standard CPU energy use of between 60-100 Watts,
the energy (ignoring network interconnect, monitors, etc.)
for the paper~\cite{CollinsSoSu16} is roughly between $4$ and $6
\cdot 10^{12}$ Joules. At $10^9$ Joules per tank of gas, this is
sufficient to drive 4000 Toyota Camrys the $380$ miles between San
Francisco and Los Angeles.


To address these challenges,
we develop stochastic optimization procedures that exhibit similar convergence
to classical approaches---when the classical approaches are
provided good tuning parameters---but they enjoy
better robustness and achieve this performance over a range of
parameters. We additionally argue for
evaluation of optimization algorithms
based not only on convergence time, but also on robustness
to input choices. Briefly, a fast algorithm that converges for
a small range of stepsizes is too brittle; we argue
instead for (potentially slightly slower) algorithms that
converge for broad ranges of stepsizes and other parameters.
Our theory and experiments demonstrate the effectiveness of our methods
for many applications, including phase retrieval,
matrix completion, and deep learning.

\subsection{Problem setting and approach}

We begin by making our setting concrete. We study the stochastic
optimization problem
\begin{equation}
  \label{eqn:objective}
  \begin{split}
    \minimize ~ & F(x) \defeq \E_P[f(x;\statrv)]
    = \int_\statdomain f(x; \statval) dP(\statval) \\
    \subjectto ~ & x \in \mc{X}.
  \end{split}
\end{equation}
In problem~\eqref{eqn:objective}, the set $\statdomain$ is a sample space,
$\mc{X} \subset \R^n$ is a closed convex, and $f(x; \statval)$ is the
instantaneous loss parameter $x$ suffers on sample $\statval$.  In this
paper, we move beyond convex optimization by considering
$\weakconvexfunc(\statval)$-weakly convex functions $f$,
meaning~\cite[cf.][]{RockafellarWe98, Drusvyatskiy18} that
\begin{equation*}
  x \mapsto \left\{f(x; \statval) + \frac{\weakconvexfunc(\statval)}{2}
  \ltwo{x}^2 \right\}
  ~~ \mbox{is convex}.
\end{equation*}
We recover the convex case when $\weakconvexfunc(\statval) \le 0$.
Examples in this framework include linear regression, where
$f(x;(a,b)) = ( \<a,x\> - b)^2$, robust phase
retrieval~\cite{SchechtmanElCoChMiSe15,DuchiRu18a} where $f(x;(a,b)) =
|\<a,x\>^2 - b|$, which is $2\ltwo{a}^2$-weakly convex, or bilinear
prediction, $f(x, y; b) = |\<x, y\> - b|$, which is $1$-weakly convex.

Most optimization methods iterate by making an approximation---a model---of
the objective near the current iterate, then minimizing this model and
re-approximating.  Stochastic (sub)gradient methods~\cite{RobbinsMo51,
  NemirovskiJuLaSh09} instantiate this approach using a linear
approximation; following initial work of our own and
others~\cite{AsiDu18,DuchiRu18c,DavisDr19}, we study the modeling approach
in more depth for stochastic optimization. Thus, the \aProx algorithms we
develop~\cite{AsiDu18} iterate as follows: for $k = 1, 2, \ldots$, we draw a
random $\statrv_k \sim P$, then update the iterate $x_k$ by minimizing a
regularized approximation to $f(\cdot; \statrv_k)$, setting
\begin{equation}
\label{eqn:model-iteration}
x_{k+1} \defeq \argmin_{x \in \mc{X}}
\left\{ f_{x_k}(x ; \statrv_k) + \frac{1}{2 \stepsize_k}
\ltwo{x - x_k}^2 \right\}.
\end{equation}
We call $f_x(\cdot ; \statval)$ the \emph{model} of $f$ at
$x$, where $f_x$ satisfies three
conditions~\cite[cf.][]{AsiDu18,DuchiRu18c,DavisDr19}:
\begin{enumerate}[label=(C.\roman*),leftmargin=*]
\item \label{cond:convex-model}
  [Model convexity]
  The function $y \mapsto f_x(y; \statval)$ is convex
  and subdifferentiable.
\item \label{cond:weak-convex-lower}
  [Weak lower bound]
  The model $f_x$ satisfies
  \begin{equation*}
    f_x(y; \statval) \le f(y; \statval) + \frac{\weakconvexfunc(\statval)}{2}
    \ltwo{y - x}^2
    ~~ \mbox{for~all~} y \in \mc{X}.
  \end{equation*}
\item \label{cond:subgrad-model}
  [Local accuracy]
  We have $f_x(x; \statval) = f(x; \statval)$ and the containment
  \begin{equation*}
    \left.\partial_y f_x(y; \statval)\right|_{y = x}
    \subset \partial_x f(x; \statval).
  \end{equation*}
\end{enumerate}
\noindent
We provide examples in Section~\ref{sec:methods}.

We show that by using just slightly more accurate models than the first
order model used by the stochastic gradient method---sometimes as simple as
recognizing that if the function loss $f$ is non-negative, we should
truncate our approximation at zero---we can achieve substantially better
theoretical guarantees and practical performance.  While the iterates of
gradient methods can (super-exponentially) diverge as soon
as we have mis-specified
stepsizes, our methods guarantee that the iterates never diverge.  Even
more, this stability guarantees convergence of the methods,
and in convex cases, optimal asymptotic normality of the averaged
iterates. Finally, we
evaluate the performance of our methods, reaffirming our
theoretical findings on convergence and
robustness for a range of problems, including matrix
completion, phase retrieval, and classification with neural networks.
We defer all proofs to the appendices.

\paragraph{Notation and basic assumptions}
For a weakly convex function $f$, we let $\partial f(x)$ denote its
Fr\'echet subdifferential at the point $x$, and $f'(x) \in \partial f(x)$
denotes an arbitrary element of the subdifferential.  Throughout, we let
$x\opt$ denote a minimizer of problem~\eqref{eqn:objective} and $\mc{X}\opt
= \argmin_{x \in \mc{X}} F(x)$ denote the optimal set for
problem~\eqref{eqn:objective}. We let $\mc{F}_k \defeq \sigma(\statrv_1,
\ldots, \statrv_k)$ denote the $\sigma$-field generated by the first $k$
random variables $\statrv_i$. Note that $x_k \in \mc{F}_{k-1}$ for all
$k$. Unless stated otherwise, we assume that the function $f(x;\statval)$ is
$\weakconvexfunc(\statval)$-weakly convex for each $s\in
\statdomain$. Finally, the following assumption will implicity hold
throughout the paper.
\begin{assumption}
  \label{assumption:bounded-noise}
  The set $\mc{X}\opt \defeq \argmin_{x \in \mc{X}}
  \{F(x)\}$ is non-empty, and there exists
  $\sigma^2 < \infty$ such that for
  each $x\opt \in \mc{X}\opt$ and selection $f'(x\opt; \statval)
  \in \partial f(x\opt; \statval)$, we have
  $\E[\ltwo{f'(x\opt;\statrv)}^2] \leq \sigma^2$.
\end{assumption}


\section{Methods}
\label{sec:methods}

\begin{figure}[t]
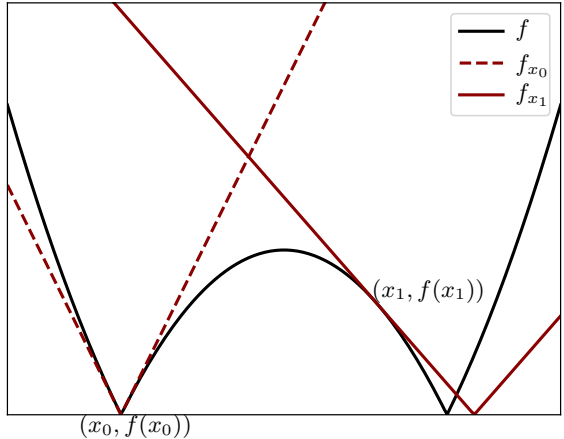

  \begin{center}
    \begin{tabular}{cc}
      \hspace{-.5cm}
      \begin{overpic}[width=.5\columnwidth]{
	  plots/prox-linear-model2}
  	 \put(89.5,67.5){\footnotesize{$f$}}
  	 \put(89.5,61.5){\footnotesize{$f_{x_0}$}}
  	 \put(89.5,56.5){\footnotesize{$f_{x_1}$}}
	  \put(15,-0.15){\footnotesize{$(x_0,f(x_0))$}}
	  \put(65,23){\footnotesize{$(x_1,f(x_1))$}}
      \end{overpic} 
    \end{tabular}
    \caption{\label{fig:prox-linear-illustrations} Prox-linear model of the
      function $f(x) = | x^2 - 1 |$ at the points $x_0$ and $x_1$.}
  \end{center}
\end{figure}


To make our approach a bit more concrete, we identify several models that
fit into our framework.  These have
appeared~\cite{DuchiRu18c,DavisDr19,AsiDu18}, but we believe a
self-contained presentation beneficial.  Each of these models satisfies our
conditions~\ref{cond:convex-model}--\ref{cond:subgrad-model}.  The most
widely used model in stochastic optimization, is the simple first-order
model:

\paragraph{Stochastic subgradient methods:}
The stochastic subgradient method
uses the model
\begin{equation}
  \label{eqn:dumb-linear-model}
  f_{x}(y; \statval) \defeq f(x; \statval) + \<f'(x; \statval), y - x\>.
\end{equation}

\paragraph{Proximal point methods:}
In the convex setting~\cite{Bertsekas11, PatrascuNe17, AsiDu18}, the
stochastic proximal point method uses the model $f_x(y; \statval) \defeq
f(y; \statval)$; in the weakly convex setting, we regularize and use
\begin{equation}
  \label{eqn:prox-model}
  f_x(y; \statval) \defeq f(y; \statval) + \frac{\weakconvexfunc(\statval)}{2}
  \ltwo{y - x}^2.
\end{equation}

We now turn to models that require less knowledge than proximal model~\eqref{eqn:prox-model}, but preserve 
important structural properties in the original function.
\paragraph{Prox-linear model:} 
Let the function $f$ have the composite structure 
$f(x;\statval) = h(c(x;\statval);\statval)$ where $h(\cdot;\statval)$ 
is convex and $c(\cdot;\statval)$ is smooth.
The stochastic prox-linear method applies $h$ 
to a first-order approximation of $c$, using
\begin{equation}
  \label{eqn:prox-linear-model}
  f_x(y; \statval) \defeq
  h(c(x;\statval) + \nabla{c(x;\statval)}^T (y-x); \statval).
\end{equation}
In the non-stochastic setting, these models are
classical~\cite{Fletcher82,FletcherWa80}, while in stochastic setting,
recent work establishes convergence and convergence rates in restrictive
settings~\cite{DuchiRu18c,DavisDr19}.  See
Figure~\ref{fig:prox-linear-illustrations} for illustration of this model.
When $h$ is $L_h$-Lipschitz and $c$ has $L_c$-Lipschitz gradient, then $f$
is $\weakconvexfunc = L_h \cdot L_c$-weakly convex.


Examples help highlight the applicability of this composite structure:

\begin{example}[Phase Retrieval]
  \label{example:PR}
  In phase retrieval~\cite{SchechtmanElCoChMiSe15}, we wish to recover an
  object $x\opt \in \C^n$ from a diffraction pattern $Ax\opt$, where $A \in
  \C^{m \times n}$, but physical sensor limitations mean we only observe the
  amplitudes $b=|Ax\opt|^2$.  A natural objective is thus
  \begin{equation*}
    \minimize_{x \in \C^n} \frac{1}{m} \sum_{i=1}^{m}
    f(x; (a_i, b_i))
    ~~~ \mbox{where} ~~
    f(x; (a_i, b_i)) \defeq \left|
    |\<a_i, x\>|^2 - b_i\right|.
  \end{equation*}
  This is evidently the composition of
  $h(z)=|z|$ and $c(x;(a_i,b_i))= |\<a_i,x\>|^2 - b_i$. 
  Moreover, a calculation~\cite{DuchiRu18a} shows that
  $f(\cdot;(a_i,b_i))$ is $2\ltwo{a_i}^2$-weakly convex.
\end{example}

\begin{example}[Matrix Completion]
  \label{example:matComp}
  In the matrix completion problem~\cite{CandesRe08}, which arises (for
  example) in the design of recommendation systems, we have a matrix $M\in
  \R^{m \times n}$ with decomposition $M = X\subopt Y\subopt^T$ for
  $X\subopt \in \R^{m \times r}$ and $Y\subopt \in \R^{n \times r}$.  Based
  on the incomplete set of known entries $\Omega \subset [m] \times [n]$,
  our goal is to recover the matrix $M$, giving rise to the objective
  \begin{equation*}
    \minimize_{X\in \R^{m \times r},Y\in \R^{n \times r}}
    \frac{1}{|\Omega|}
    \sum_{(i,j)\in \Omega} f(x_i, y_j; M_{i,j})
    ~~~ \mbox{where} ~~
    f(x, y; z)
    \defeq | \<x, y\> -  z |
  \end{equation*}
  and $X_i$ and $Y_j$ are the $i$th and $j$th rows of $X$ and $Y$.
  This is the composition of $h(z) = |z|$ and $c(x, y, z)
  = \<x, y\> - z$, so that $f = h \circ c$ is $1$-weakly convex.
\end{example}

\paragraph{Truncated models:}

The prox-linear model~\eqref{eqn:prox-linear-model} may be challenging to
implement for complex compositions (e.g., deep learning),
and it requires a common but
potentially restrictive structure. If instead we know a lower bound on
$f$, we may incorporate this to model
\begin{equation}
  \label{eqn:trunc-model}
  f_x(y; \statval) \defeq \max\left\{
  f(x; \statval) + \<f'(x;\statval), y - x\>,
  \inf_{z \in \mc{X}} f(z; \statval)\right\}.
\end{equation}
In most of our examples---linear and logistic regression, phase retrieval,
matrix completion (and more generally, all typical loss functions in machine
learning)---we have $\inf_{z} f(z; \statval) = 0$. The assumption
that we have a lower bound is thus rarely restrictive. This model
satisfies the conditions~\ref{cond:convex-model}--\ref{cond:subgrad-model},
also satisfying the additional condition
\begin{enumerate}[label=(C.\roman*),leftmargin=*]
  \setcounter{enumi}{3}
\item \label{cond:lower-by-optimal}
  [Lower optimality]
  For all $\statval \in \statdomain$, the models $f_x(\cdot; \statval)$ satisfy
  \begin{equation*}
    f_x(y; \statval) \ge \inf_{z \in \mc{X}}
    f(z; \statval).
  \end{equation*}
\end{enumerate}
\noindent
As we show, Condition~\ref{cond:lower-by-optimal} is sufficient to
derive several optimality and stability properties.


\section{Stability and its consequences}
\label{sec:stability}

In our initial study of stability in optimization~\cite{AsiDu18}, we defined
an algorithm as \emph{stable} if its iterates remain bounded, then showed
several consequences of this in convex optimization (which we review
presently).  A weakness, however, was that the only algorithms whose
stability we were able to demonstrate were very close approximations to
stochastic proximal point methods. Here, we develop two important
extensions. First, we show that any model satisfying
Condition~\ref{cond:lower-by-optimal} has stable iterates under mild
assumptions, in strong contrast to models (e.g.\ linear) that fail the
condition.  Second, we develop an analogous stability theory for weakly
convex functions, proving that accurate enough models are stable.  In
parallel to the convex case, stability suffices for more: it implies
convergence (with an asymptotic rate) to stationary points for any
model-based method on weakly convex functions.


Let us formalize stability.  A pair $(\mc{F}, \mc{P})$ a \emph{collection of
  problems} if $\mc{P}$ consists of probability measures on a sample
space $\statdomain$ and $\mc{F}$ of functions $f : \mc{X}
\times \statdomain \to \R$.

\begin{definition}
  \label{definition:stability}
  An algorithm generating iterates $x_k$ according to the model-based
  update~\eqref{eqn:model-iteration} is \emph{stable in probability} for the
  problems $(\mc{F}, \mc{P})$ if for all $f \in \mc{F}$,
  square-summable positive stepsize sequences $\{\stepsize_k\}$,
  and $P \in \mc{P}$ defining $F(x) = \E_P[f(x; \statrv)]$
  and $\mc{X}\opt = \argmin_{x \in \mc{X}} F(x)$,
  \begin{equation}
    \sup_k \dist(x_k, \mc{X}\opt) < \infty
    ~~ \mbox{with~probability~1}.
    \label{eqn:bounded-distance}
  \end{equation}
\end{definition}
Standard models, such as the linear model~\eqref{eqn:dumb-linear-model}
and consequent
subgradient method, are unstable~\cite[Sec. 3]{AsiDu18}.
They may even cause super-exponential divergence:

\begin{example}[Divergence]
  \label{ex:exp-divergence}
  Let $F(x) = e^{x} + e^{-x}$, $p < \infty$, and $\stepsize_0 > 0$, and let
  $\stepsize_k$ be any sequence satisfying $\stepsize_k \ge \stepsize_0
  k^{-p}$.  Let $x_{k+1} = x_k - \stepsize_k F'(x_k) = x_k -
  \stepsize_k(e^{x_k} - e^{-x_k}) $ be generated by the gradient method.
  Whenever $x_1$ is large enough, then $\log \frac{x_{k+1}}{x_k}
  \ge 2^k$ for all $k$.
\end{example}

\subsection{The importance of stability in stochastic convex optimization}

To set the stage for what follows, we begin by motivating the importance of
stable procedures.  Briefly, any stable \aProx model converges for any
convex function under weak assumptions, which we now elucidate.
First, we make an
\begin{assumption}
  \label{assumption:weak-second-moment}
  There exists an increasing function $\growfunc : \R_+ \to
  \openright{0}{\infty}$ such that for all $x \in \mc{X}$ and each selection
  $f'(x; \statval) \in \partial f(x; \statval)$,
  \begin{equation*}
    \E\left[\ltwo{f'(x; \statrv)}^2\right] \le \growfunc(\ltwo{x}).
  \end{equation*}
\end{assumption}
\noindent
Assumption~\ref{assumption:weak-second-moment} is relatively weak, and is
equivalent to assuming that $\E[\ltwos{f'(x;\statrv)}^2]$ is bounded on
compacta; it allows arbitrary growth---exponential, super-exponential---in
the norm of the subgradients, just requiring that the second moment exists.

\begin{corollary}[\citet{AsiDu18}, Prop.~1]
  \label{cor:convergence-from-boundedness-convex}
  Assume that $f(\cdot; \statval)$ is convex for each $\statval \in
  \statdomain$ and let Assumption~\ref{assumption:weak-second-moment}
  hold.
  Let the iterates $x_k$ be
  generated by any method satisfying
  Conditions~\ref{cond:convex-model}--\ref{cond:subgrad-model}, and
  additionally assume that with probability 1,
  $\sup_k \norm{x_k} < \infty$. Then $\sum_k \stepsize_k (F(x_k) - F(x\opt))
  < \infty$. If in addition
  $\inf_{x \in \mc{X}} \left\{F(x) - F(x\opt) \mid \dist(x, \mc{X}\opt)
  \ge \epsilon\right\} > 0$
  for all $\epsilon > 0$, then $\dist(x_k, \mc{X}\opt) \cas 0$.
\end{corollary}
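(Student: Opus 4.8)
The plan is the standard Robbins--Siegmund (almost-supermartingale) argument on the squared distance $\ltwo{x_k - x\opt}^2$ to a fixed optimum $x\opt \in \mc{X}\opt$, with the model conditions producing a ``descent'' term proportional to $F(x_k)-F(x\opt)$ and Assumption~\ref{assumption:weak-second-moment} together with the boundedness hypothesis supplying a summable error. First I would record the one-step inequality. Because $x \mapsto f_{x_k}(x;\statrv_k) + \frac{1}{2\stepsize_k}\ltwo{x - x_k}^2$ is $\stepsize_k^{-1}$-strongly convex and $x_{k+1}$ minimizes it over the convex set $\mc{X}$, evaluating the strong-convexity lower bound at $x\opt$ gives
\[
f_{x_k}(x_{k+1};\statrv_k) + \tfrac{1}{2\stepsize_k}\ltwo{x_{k+1}-x_k}^2 + \tfrac{1}{2\stepsize_k}\ltwo{x_{k+1}-x\opt}^2 \le f_{x_k}(x\opt;\statrv_k) + \tfrac{1}{2\stepsize_k}\ltwo{x_k-x\opt}^2 .
\]
Next I would invoke the three model conditions: since $f$ is convex here, $\weakconvexfunc(\statrv_k)\le 0$, so Condition~\ref{cond:weak-convex-lower} gives $f_{x_k}(x\opt;\statrv_k)\le f(x\opt;\statrv_k)$, while Conditions~\ref{cond:convex-model}--\ref{cond:subgrad-model} give $f_{x_k}(x_{k+1};\statrv_k)\ge f(x_k;\statrv_k) + \<g_k, x_{k+1}-x_k\>$ for $g_k = f'(x_k;\statrv_k)$ the subgradient shared by $\partial f(x_k;\statrv_k)$ and $\partial_y f_{x_k}(y;\statrv_k)|_{y=x_k}$. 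Substituting, multiplying by $2\stepsize_k$, and bounding $-2\stepsize_k\<g_k,x_{k+1}-x_k\>\le \ltwo{x_{k+1}-x_k}^2 + \stepsize_k^2\ltwo{g_k}^2$ (which cancels the $\ltwo{x_{k+1}-x_k}^2$ terms) leaves
\[
\ltwo{x_{k+1}-x\opt}^2 \le \ltwo{x_k-x\opt}^2 + 2\stepsize_k\big(f(x\opt;\statrv_k) - f(x_k;\statrv_k)\big) + \stepsize_k^2\ltwo{g_k}^2 .
\]

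I would then take $\E[\,\cdot \mid \mc{F}_{k-1}]$; since $x_k\in\mc{F}_{k-1}$ and $\statrv_k$ is independent of $\mc{F}_{k-1}$, and using Assumption~\ref{assumption:weak-second-moment}, this yields
\[
\E\big[\ltwo{x_{k+1}-x\opt}^2 \mid \mc{F}_{k-1}\big] \le \ltwo{x_k-x\opt}^2 - 2\stepsize_k\big(F(x_k)-F(x\opt)\big) + \stepsize_k^2\,\growfunc(\ltwo{x_k}),
\]
with $F(x_k)-F(x\opt)\ge 0$. The one point that needs care---the main obstacle---is that $\growfunc(\ltwo{x_k})$ is random, so this is not yet in Robbins--Siegmund form with a summable deterministic error. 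This is exactly where the hypothesis $\sup_k\ltwo{x_k}<\infty$ a.s.\ is used: I would localize via the stopping time $\tau_B = \inf\{k : \ltwo{x_k} > B\}$, so that on $\{\tau_B = \infty\}$ the stopped sequence satisfies the same recursion with error $\stepsize_k^2\growfunc(B)$, which is summable because $\growfunc$ is increasing and $\sum_k\stepsize_k^2<\infty$. Robbins--Siegmund then gives, on $\{\tau_B=\infty\}$, both that $\ltwo{x_k-x\opt}^2$ converges and that $\sum_k\stepsize_k(F(x_k)-F(x\opt))<\infty$; since $\{\sup_k\ltwo{x_k}<\infty\}=\bigcup_{B=1}^\infty\{\tau_B=\infty\}$ up to a null set, letting $B\to\infty$ proves the first conclusion.

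For the second conclusion I would combine this with Fej\'er-type monotonicity and the identifiability hypothesis. Using $\sum_k\stepsize_k=\infty$ together with $\sum_k\stepsize_k(F(x_k)-F(x\opt))<\infty$ forces $\liminf_k(F(x_k)-F(x\opt))=0$, and then $\inf\{F(x)-F(x\opt): \dist(x,\mc{X}\opt)\ge\epsilon\}>0$ for every $\epsilon>0$ forces $\liminf_k\dist(x_k,\mc{X}\opt)=0$. Passing to a subsequence along which the distance tends to $0$ and using $\sup_k\ltwo{x_k}<\infty$, a further subsequence converges to some $\bar x\in\mc{X}\opt$ (which is closed). Finally---the other slightly delicate point---I would note that the convergence of $\ltwo{x_k - x\opt}^2$ holds simultaneously for all $x\opt$ in a countable dense subset of $\mc{X}\opt$, hence by $1$-Lipschitz dependence on $x\opt$ and boundedness of $\{x_k\}$ for \emph{every} $x\opt\in\mc{X}\opt$, in particular for $x\opt=\bar x$; since $\ltwo{x_k-\bar x}$ converges and has $0$ as a subsequential limit, $x_k\to\bar x\in\mc{X}\opt$, so $\dist(x_k,\mc{X}\opt)\cas 0$.
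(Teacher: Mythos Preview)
Your argument is correct. Note, however, that the paper does not itself prove this corollary: it is stated as a direct citation of Proposition~1 in~\cite{AsiDu18}, so there is no in-paper proof to compare against line by line. That said, your derivation is exactly the one the paper's technical appendix sets up: your one-step bound is Lemma~\ref{lemma:single-step-progress}, your substitution of the model conditions reproduces Lemma~\ref{lemma:single-recentered-progress} (with $\weakconvexfunc\equiv 0$), and you close with the Robbins--Siegmund lemma (Lemma~\ref{lemma:robbins-siegmund}) plus the standard Fej\'er-monotone limit extraction. The localization via the stopping time $\tau_B$ to tame the random error $\stepsize_k^2\,\growfunc(\ltwo{x_k})$ and the countable-dense-subset trick to pass from per-$x\opt$ convergence to convergence for the specific limit point $\bar x$ are both the expected maneuvers and are carried out correctly.
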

\noindent
Corollary~\ref{cor:convergence-from-boundedness-convex} establishes convergence
of stable procedures, and also (via Jensen's inequality) provides
asymptotic rates of convergence for weighted averages
$\sum_k \stepsize_k x_k / \sum_k \stepsize_k$.

Additionally, when the functions $f$ are smooth, any \aProx method achieves
asymptotically optimal convergence whenever the iterates remain bounded.  In
particular, let us assume that $F$ is $\mc{C}^2$ near $x\opt =
\argmin_{\mc{X}} F(x)$ with $\nabla^2 F(x\opt) \succ 0$, and that on some
neighborhood of $x\opt$ the functions $f(\cdot; \statval)$ have
$\lipgrad(\statval)$-Lipschitz gradient with $\E[\lipgrad(\statrv)^2] <
\infty$. We have
\begin{corollary}[\citet{AsiDu18}, Theorem~2]
  \label{corollary:asymptotic-normality}
  In addition to the conditions of
  Corollary~\ref{cor:convergence-from-boundedness-convex}, let the
  conditions of the previous paragraph hold. Then
  $\wb{x}_k = \frac{1}{k} \sum_{i = 1}^k x_i$ satisfies
  \begin{equation*}
    \sqrt{k} (\wb{x}_k - x\opt) \cd \normal\big(0,
    \nabla^2 F(x\opt)^{-1} \cov(\nabla f(x\opt; \statrv))
    \nabla^2 F(x\opt)^{-1} \big).
  \end{equation*}
\end{corollary}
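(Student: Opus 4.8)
The plan is to show that, once the iterates have entered a neighborhood of $x\opt$, the model‑based update~\eqref{eqn:model-iteration} is a stochastic‑gradient step up to a controllably small bias, and then to invoke a Polyak–Ruppert averaging theorem for the resulting perturbed linear recursion. Throughout we work on the almost sure event that $\sup_k\ltwo{x_k}<\infty$ and $\dist(x_k,\mc{X}\opt)\to 0$, which holds under the stated hypotheses by Corollary~\ref{cor:convergence-from-boundedness-convex}; since $\nabla^2 F(x\opt)\succ 0$ forces $\mc{X}\opt=\{x\opt\}$ locally, in fact $x_k\to x\opt$, and for all large $k$ the point $x_k$ lies in the neighborhood $U$ of $x\opt$ on which $F$ is $\mc{C}^2$ and each $f(\cdot;\statval)$ has $\lipgrad(\statval)$‑Lipschitz gradient (we may also take $x\opt\in\mathrm{int}\,\mc{X}$, so that on $U$ the update is effectively unconstrained). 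Write $\Delta_k\defeq x_k-x\opt$ and $H\defeq\nabla^2 F(x\opt)\succ 0$.

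\emph{Step 1 (reduction to perturbed SGD).} The optimality conditions for~\eqref{eqn:model-iteration} give $x_{k+1}=x_k-\stepsize_k g_k$ for some $g_k\in\partial_y f_{x_k}(y;\statrv_k)\big|_{y=x_{k+1}}$. Convexity of $y\mapsto f_{x_k}(y;\statrv_k)$ (condition~\ref{cond:convex-model}) together with $f_{x_k}(x_k;\statrv_k)=f(x_k;\statrv_k)$ and $\nabla f(x_k;\statrv_k)\in\partial_y f_{x_k}(y;\statrv_k)\big|_{y=x_k}$ (condition~\ref{cond:subgrad-model}), applied to the pair $x_k,x_{k+1}$, yields $\<\nabla f(x_k;\statrv_k),g_k\>\ge\ltwo{g_k}^2$ and hence $\ltwo{g_k}\le\ltwo{\nabla f(x_k;\statrv_k)}$. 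Moreover, combining the weak lower bound~\ref{cond:weak-convex-lower} with the $\lipgrad(\statrv_k)$‑smoothness of $f$ on $U$ sandwiches $f_{x_k}(\cdot;\statrv_k)$ between its affine minorant at $x_k$ and that affine minorant plus $\tfrac12(\lipgrad(\statrv_k)+\weakconvexfunc(\statrv_k)_+)\ltwo{\cdot-x_k}^2$; a nonnegative convex function squeezed under such a quadratic has subgradients at $x_{k+1}$ within $O\big((\lipgrad(\statrv_k)+\weakconvexfunc(\statrv_k)_+)\ltwo{x_{k+1}-x_k}\big)$ of $\nabla f(x_k;\statrv_k)$. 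Setting $e_k\defeq g_k-\nabla f(x_k;\statrv_k)$ we obtain
\begin{equation*}
  x_{k+1}=x_k-\stepsize_k\nabla f(x_k;\statrv_k)-\stepsize_k e_k,
  \qquad
  \ltwo{e_k}\le c(\statrv_k)\,\stepsize_k\ltwo{\nabla f(x_k;\statrv_k)},
\end{equation*}
with $c(\statrv_k)=O(\lipgrad(\statrv_k)+\weakconvexfunc(\statrv_k)_+)$ square‑integrable.

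\emph{Steps 2–3 (linearization and averaging).} On $U$ write $\nabla F(x_k)=H\Delta_k+r_k$ with $\ltwo{r_k}=o(\ltwo{\Delta_k})$ and let $\xi_k\defeq\nabla f(x_k;\statrv_k)-\nabla F(x_k)$, an $\mc{F}_k$‑martingale difference. Using $\ltwo{\nabla f(x_k;\statrv_k)}\le\lipgrad(\statrv_k)\ltwo{\Delta_k}+\ltwo{\nabla f(x\opt;\statrv_k)}$, Step 1 gives the recursion
\begin{equation*}
  \Delta_{k+1}=(I-\stepsize_k H)\Delta_k-\stepsize_k\xi_k-\stepsize_k\zeta_k,
  \qquad \ltwo{\zeta_k}\le o(1)\,\ltwo{\Delta_k}+\stepsize_k w_k,
\end{equation*}
where $\E[w_k\mid\mc{F}_{k-1}]$ is bounded (by $\E[\lipgrad(\statrv)^2]<\infty$, Assumption~\ref{assumption:bounded-noise}, and Cauchy–Schwarz). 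This is the standard perturbed linear recursion of Polyak–Ruppert averaging. I would verify its hypotheses: $H\succ 0$ and $\Delta_k\to0$; the conditional covariance $\E[\xi_k\xi_k^\top\mid\mc{F}_{k-1}]$, which equals $\cov(\nabla f(x;\statrv))$ evaluated at $x=x_k$, converges to $\cov(\nabla f(x\opt;\statrv))$ by continuity at $x\opt$ (dominated convergence with $\E[\lipgrad(\statrv)^2]<\infty$); and $\{\ltwo{\xi_k}^2\}$ is uniformly integrable because $\ltwo{\xi_k}\le\lipgrad(\statrv_k)\ltwo{\Delta_k}+\ltwo{\nabla f(x\opt;\statrv_k)}+\ltwo{\nabla F(x\opt)}$ with $\Delta_k\to0$, yielding a Lindeberg condition and the martingale CLT for $\tfrac1{\sqrt k}\sum_{i\le k}\xi_i$. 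For the perturbation, $\tfrac1{\sqrt k}\sum_{i\le k}\zeta_i\to0$ in probability: the $o(1)\ltwo{\Delta_i}$ part vanishes since $\ltwo{\Delta_i}\to0$, and both it and the $\stepsize_i w_i$ part are killed by $\tfrac1{\sqrt k}\sum_{i\le k}\stepsize_i\to0$, which follows from $\sum_i\stepsize_i^2<\infty$ by splitting the sum at $\sqrt k$ and Cauchy–Schwarz. Rearranging the recursion into $H(\wb{x}_k-x\opt)=\tfrac1k\sum_{i\le k}\tfrac{\Delta_i-\Delta_{i+1}}{\stepsize_i}-\tfrac1k\sum_{i\le k}\xi_i-\tfrac1k\sum_{i\le k}\zeta_i$ and applying Abel summation to the first term (its boundary piece $\ltwo{\Delta_{k+1}}/(\stepsize_k\sqrt k)\to0$ using an $L^2$ rate $\E\ltwo{\Delta_k}^2=O(\stepsize_k)$), one gets $\sqrt k(\wb{x}_k-x\opt)=H^{-1}\tfrac1{\sqrt k}\sum_{i\le k}\xi_i+o_P(1)$, hence the stated limit $\normal\big(0,H^{-1}\cov(\nabla f(x\opt;\statrv))H^{-1}\big)$.

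\emph{Main obstacle.} The crux is Step 1: unlike the exact gradient method, the model update is implicit and $g_k$ is a subgradient of the model at $x_{k+1}$, not at $x_k$, so the reduction to a stochastic‑gradient step is not automatic. The work is to show, using \emph{only} conditions~\ref{cond:convex-model}–\ref{cond:subgrad-model} and the local smoothness of $f$, that the bias $e_k$ is of order $\stepsize_k\ltwo{\nabla f(x_k;\statrv_k)}$ with a square‑integrable prefactor, and then that this bias is asymptotically negligible after averaging — which is precisely where square‑summability of $\{\stepsize_k\}$ is used. The subsidiary technical point is the $L^2$ convergence rate $\E\ltwo{\Delta_k}^2=O(\stepsize_k)$, obtained from a Lyapunov recursion on $\E\ltwo{\Delta_k}^2$ using $H\succ0$ and bounded noise variance, which is needed to control the boundary term in the Abel summation.
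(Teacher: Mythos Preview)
The paper does not prove this corollary: it is quoted as Theorem~2 of the companion work~\cite{AsiDu18}, and no argument for it appears anywhere in the present text or its appendices. There is therefore nothing here to compare your sketch against.

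On its own merits your outline is the natural route and is correct in spirit: reduce the implicit model step to an explicit stochastic-gradient step plus a bias of order $\stepsize_k$, then invoke the Polyak--Juditsky averaging machinery for the resulting perturbed linear recursion. The monotonicity argument giving $\ltwo{g_k}\le\ltwo{\nabla f(x_k;\statrv_k)}$ is right, and the sandwich bound on $e_k$ via conditions~\ref{cond:weak-convex-lower}--\ref{cond:subgrad-model} plus local smoothness is the correct idea. Two places would need care in a full write-up. First, the quadratic upper bound you use to squeeze $f_{x_k}$ and control $e_k$ is only available on the smoothness neighborhood $U$, and $\ltwo{x_{k+1}-x_k}\le\stepsize_k\ltwo{\nabla f(x_k;\statrv_k)}$ need not be small on every realization even once $x_k$ is close to $x\opt$ (the random gradient $\nabla f(x\opt;\statrv)$ is only square-integrable), so a truncation or localization device is needed to make the $e_k$ bound hold for all large $k$ almost surely rather than just in expectation. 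Second, you correctly flag that $x\opt\in\mathrm{int}\,\mc{X}$ is needed for the unconstrained first-order conditions you use; this is indeed required but is not explicit in the corollary as stated here, so you should record it as an added hypothesis.
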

\noindent
This convergence is optimal for any method given samples $\statrv_1, \ldots,
\statrv_k \simiid P$ (see~\cite{DuchiRu19}), and
Corollary~\ref{corollary:asymptotic-normality} highlights the importance of
stability: if any \aProx method is stable, it enjoys (asymptotically)
optimal convergence.

\subsection{Stability of lower-bounded models for convex functions}

With these consequences of stability in hand---convergence and asymptotic
optimality---it behooves us to provide conditions sufficient to
guarantee stability. To that end, we show that lower bounded models
satisfying Condition~\ref{cond:lower-by-optimal} are stable in probability
(Def.~\ref{definition:stability}) for functions whose (sub)gradients grow at
most polynomially.
We begin by stating our polynomial growth assumption.

\begin{assumption}
  \label{assumption:polynomial-growth}
  There exist $C, p<\infty$ such that for every $x \in{\mc{X}}$,
  \begin{equation*}
    \E\left[\ltwo{f'(x; \statrv)}^2\right] \le C(1+\dist(x, \mc{X}\opt)^p),
  \end{equation*}
  and $\E[(f(x\opt;\statrv) - \inf_{z\in \mc{X}}
    f(z;\statrv))^{p/2}] \le C$ for all $x\opt \in
  \mc{X}\opt$.
\end{assumption}	

The analogous condition~\cite{PolyakJu92} for stochastic gradient methods
holds for $p = 2$, or quadratic growth, without which the method may
diverge.  In contrast, Assumption~\ref{assumption:polynomial-growth} allows
polynomial growth; for example, the function $f(x) = x^4$ is permissible,
while the gradient method may exponentially diverge even for stepsizes
$\stepsize_k = 1/k$. The key consequence of
Assumption~\ref{assumption:polynomial-growth} is that if it holds, truncated
models are stable:
\begin{theorem}
  \label{theorem:trunc-stability}
  Assume the function $f(\cdot;\statval)$ is convex for each $\statval \in
  \statdomain$. Let Assumption~\ref{assumption:polynomial-growth} hold and
  $\stepsize_k = \stepsize_0 k^{-\steppow}$ with $\frac{p+2}{p+4} < \steppow
  < 1$.  Let $x_k$ be generated by the iteration~\eqref{eqn:model-iteration}
  with a model satisfying
  Conditions~\ref{cond:convex-model}--\ref{cond:lower-by-optimal}. Then
  \begin{equation*}
    \sup_{k \in \N} \dist(x_k, \mc{X}\opt) < \infty
    \quad  \text{with probability 1.}
  \end{equation*}
\end{theorem}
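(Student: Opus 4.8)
The backbone is a one-step energy recursion for $\phi_k := \dist(x_k,\mc{X}\opt)^2$. First I would record the inequality that strong convexity of $y \mapsto f_{x_k}(y;\statrv_k) + \frac{1}{2\stepsize_k}\ltwo{y-x_k}^2$ at its minimizer $x_{k+1}$ furnishes: for any $x\opt \in \mc{X}\opt$,
\[
\ltwo{x_{k+1}-x\opt}^2 \le \ltwo{x_k - x\opt}^2 - \ltwo{x_{k+1}-x_k}^2 + 2\stepsize_k\big(f_{x_k}(x\opt;\statrv_k) - f_{x_k}(x_{k+1};\statrv_k)\big),
\]
and, choosing $x\opt = \Pi_{\mc{X}\opt}(x_k)$, the left-hand side dominates $\phi_{k+1}$. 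Conditions~\ref{cond:convex-model}--\ref{cond:subgrad-model} give $f_{x_k}(x\opt;\statrv_k) \le f(x\opt;\statrv_k)$ and $f_{x_k}(x_{k+1};\statrv_k) \ge f(x_k;\statrv_k) + \<f'(x_k;\statrv_k), x_{k+1}-x_k\>$, while Condition~\ref{cond:lower-by-optimal}, plugged into the same inequality with $x\opt$ replaced by $x_k$, yields the key step-size bound $\ltwo{x_{k+1}-x_k}^2 \le \stepsize_k(f(x_k;\statrv_k) - \inf_{z\in\mc{X}} f(z;\statrv_k))$. This is the whole point of modeling the lower bound: it lets me control the troublesome cross term as $-2\stepsize_k\<f'(x_k;\statrv_k), x_{k+1}-x_k\> \le 2\stepsize_k^{3/2}\ltwo{f'(x_k;\statrv_k)}\sqrt{f(x_k;\statrv_k) - \inf_z f(z;\statrv_k)}$, saving a full factor $\sqrt{\stepsize_k}$ over the $\stepsize_k^2\ltwo{f'(x_k;\statrv_k)}^2$ term that appears---uncontrollably, under only Assumption~\ref{assumption:polynomial-growth}---in the subgradient method.

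I would then take conditional expectations given $\mc{F}_{k-1}$ and apply Cauchy--Schwarz to the cross term. By Assumption~\ref{assumption:polynomial-growth} the second moment of $f'(x_k;\statrv_k)$ is at most $C(1+\phi_k^{p/2})$, and the conditional mean of $f(x_k;\statrv_k) - \inf_z f(z;\statrv_k)$ is $(F(x_k)-F(x\opt))$ plus the finite noise floor $\E[f(x\opt;\statrv) - \inf_z f(z;\statrv)]$; after Young's inequalities calibrated so that the resulting deterministic errors are summable (this is one place the lower bound $\steppow > \frac{p+2}{p+4}$ enters) I expect a recursion of the form
\[
\E\big[\phi_{k+1} \mid \mc{F}_{k-1}\big] \le \phi_k - c\,\stepsize_k\big(F(x_k) - F(x\opt)\big) + C\,\stepsize_k^{1+\eta}\big(1 + \phi_k^{p/2}\big) + \epsilon_k, \qquad \sum_k \epsilon_k < \infty,
\]
for some $\eta > 0$. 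This is an almost-supermartingale whose only non-summable term is $C\stepsize_k^{1+\eta}\phi_k^{p/2}$.

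To tame that term, one natural route is the following. When $p>2$, pass to $\psi_k := (1+\phi_k)^{-r}$ with $r\ge p/2-1$: since $\psi$ is convex, Jensen turns the recursion into $\E[\psi_{k+1}\mid\mc{F}_{k-1}] \ge \psi_k(1-\delta_k)$ with $\delta_k$ involving $\stepsize_k^{1+\eta}(1+\phi_k)^{p/2-1-r}\le\stepsize_k^{1+\eta}$, so $\{\psi_k\}$ is a nonnegative submartingale up to a summable perturbation and converges a.s.; when $p\le 2$ the bad term is at most $C\stepsize_k^{1+\eta}(1+\phi_k)$ and the same conclusion follows from a multiplicative (Gronwall-type) bound on $\phi_k$ directly. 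Either way, either $\sup_k\phi_k<\infty$ or $\phi_k\to\infty$. Excluding divergence is where the drift $-c\stepsize_k(F(x_k)-F(x\opt))$ and the precise stepsize window are needed: through a stopping-time argument I would follow excursions of $\phi_k$ above a large level $R$ and show that during such an excursion $\phi_k$ can grow at most polynomially, at a rate (determined by $p$ and $\steppow$) that $\steppow > \frac{p+2}{p+4}$ renders self-limiting---so the accumulated ``energy'' $\sum\stepsize_k^{1+\eta}\phi_k^{p/2-1}$ over the excursion stays bounded and the excursion ends---while $\steppow < 1$ keeps $\sum_k\stepsize_k=\infty$, so the iterates cannot remain permanently trapped far from $\mc{X}\opt$. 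Together with the a.s.\ convergence of $\psi_k$, this yields $\sup_k\dist(x_k,\mc{X}\opt)<\infty$ a.s.

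I expect this last step to be the real obstacle: the almost-supermartingale inequality is \emph{borderline} for summability and so by itself does not forbid $\dist(x_k,\mc{X}\opt)\to\infty$; the delicate part is the quantitative control of an excursion---how fast $\phi_k$ can rise before the drift pulls it back---and verifying that $\frac{p+2}{p+4}<\steppow<1$ is exactly the window in which this closes. A secondary, routine nuisance is matching the moment hypotheses: only $p/2$-th moments of $f(x\opt;\statrv)-\inf_z f(z;\statrv)$ and second moments of subgradients are available, so Cauchy--Schwarz (not a higher Hölder bound) must carry the cross term, and one must check that the constant in Assumption~\ref{assumption:polynomial-growth} hides no dependence on the a priori unbounded iterates.
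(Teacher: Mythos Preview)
You have correctly identified the basic one-step inequality and you correctly use Condition~\ref{cond:lower-by-optimal} to control $\ltwo{x_{k+1}-x_k}$; but you are using~\ref{cond:lower-by-optimal} in the wrong place, and this is why the argument becomes delicate. The paper's proof exploits~\ref{cond:lower-by-optimal} \emph{directly} on the function-value term: in your first display, simply bound $f_{x_k}(x_{k+1};\statrv_k) \ge \inf_{z\in\mc{X}} f(z;\statrv_k)$ (rather than by the linear model $f(x_k;\statrv_k)+\<f'(x_k;\statrv_k),x_{k+1}-x_k\>$). Combined with $f_{x_k}(x\opt;\statrv_k)\le f(x\opt;\statrv_k)$ this gives the much stronger recursion
\[
\phi_{k+1} \;\le\; \phi_k \;+\; 2\stepsize_k\big(f(x\opt;\statrv_k) - \inf_{z\in\mc{X}} f(z;\statrv_k)\big),
\]
whose increment is \emph{independent of $x_k$}. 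Iterating, raising to the $p/2$ power, and applying Jensen together with the moment bound $\E[(f(x\opt;\statrv)-\inf_z f(z;\statrv))^{p/2}]\le C$ from Assumption~\ref{assumption:polynomial-growth} yields an \emph{a priori} polynomial bound $\E[\phi_k^{p/2}]\lesssim k^{p(1-\steppow)/2}$. Feeding this into the standard recursion $\E[\phi_{k+1}\mid\mc{F}_{k-1}]\le\phi_k-2\stepsize_k(F(x_k)-F(x\opt))+B_k$ with $B_k=\stepsize_k^2\E[\ltwo{f'(x_k;\statrv_k)}^2\mid\mc{F}_{k-1}]$ gives $\E[\sum_k B_k]\lesssim\sum_k k^{-2\steppow}+\sum_k k^{-(2\steppow-p(1-\steppow)/2)}<\infty$ exactly when $\steppow>\frac{p+2}{p+4}$, so $\sum_k B_k<\infty$ a.s.\ by monotone convergence and Robbins--Siegmund finishes immediately.

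By instead lower-bounding $f_{x_k}(x_{k+1};\statrv_k)$ with the linear model, you retain the drift $-c\stepsize_k(F(x_k)-F(x\opt))$ at the cost of keeping an $x_k$-dependent error $C\stepsize_k^{1+\eta}\phi_k^{p/2}$. You correctly recognize this term is only \emph{borderline} summable and that the recursion alone does not preclude $\phi_k\to\infty$; the excursion argument you sketch to close this gap is where the proposal fails to be a proof. You do not supply the quantitative control on how fast $\phi_k$ can grow during an excursion, and for $p>2$ the feedback $\phi_{k+1}-\phi_k\approx C\stepsize_k^2\phi_k^{p/2}$ is genuinely superlinear in $\phi_k$, so ruling out blow-up requires exactly the kind of a priori growth bound that the clean recursion above delivers for free. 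The $\psi_k=(1+\phi_k)^{-r}$ device shows only that $\phi_k$ either stays bounded or tends to $+\infty$; it does not by itself exclude the latter, and the drift term $-c\stepsize_k(F(x_k)-F(x\opt))$ cannot be invoked without relating $F(x_k)-F(x\opt)$ to $\phi_k$, which Assumption~\ref{assumption:polynomial-growth} does not do.
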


Theorem~\ref{theorem:trunc-stability} shows that truncated methods
enjoy all the
benefits of stability we outline in
Corollaries~\ref{cor:convergence-from-boundedness-convex}
and~\ref{corollary:asymptotic-normality} above. Thus, these
models, whose updates are typically as cheap to compute as
a stochastic gradient step (especially in the common case that
$\inf_z f(z; \statval) = 0$) provide substantial advantage over
methods using only (sub)gradient approximations.


\subsection{Stability and its consequences for weakly convex functions}

We continue our argument that---if possible---it is beneficial to use more
accurate models, even in situations beyond convexity, investigating the
stability of proximal models~\eqref{eqn:prox-model} for weakly convex
functions. Establishing stability in the weakly convex case requires
a different approach to the convex case, as the iterates may not
make progress toward a fixed optimal set.
In this case, to show stability, we require an assumption
bounding the size of $f'(x; \statrv)$ relative to the population
subgradient $F'$.
\begin{assumption}
  \label{assumption:variance-bounded}
  There exist $C_1, C_2 < \infty$
  such that for all measurable selections $f'(x; \statval) \in \partial
  f(x; \statval)$ and $F'(x) \in \partial F(x)$,
  \begin{equation*}
    \var(f'(x;\statrv))
    \le C_1 \ltwo{F'(x)}^2 + C_2.
  \end{equation*}
\end{assumption}
\noindent
By providing a ``relative'' noise condition on $f'$,
Assumption~\ref{assumption:variance-bounded} allows for a broader class of
functions without global Lipschitz properties (as are typically
assumed~\cite{DavisDr19}), such as the phase retrieval and matrix completion
objectives (Examples~\ref{example:PR} and~\ref{example:matComp}).  It can
allow exponential growth, addressing the challenges in
Ex.~\ref{ex:exp-divergence}. For example, let $f(x; 1) = e^x$ and $f(x; 2) =
e^{-x}$, where $\statrv$ is uniform in $\{1,2\}$ so that $F(x) = \half(e^x +
e^{-x})$; then $\E[f'(x;\statrv)^2] = 2 F'(x)^2 + \half$.

To describe convergence and stability guarantees in non-convex (even
non-smooth) settings, we require appropriate definitions.  Finding global
minima of non-convex functions is computationally
infeasible~\cite{NemirovskiYu83}, so we follow established practice and
consider convergence to stationary points, specifically using the
convergence of the Moreau envelope~\cite{DavisDr19, DrusvyatskiyLe18}.  To
formalize, for $x \in \R^n$ and $\lambda \ge 0$, the \emph{Moreau envelope}
and associated proximal map are
\begin{equation*}
  F_\lambda(x) \defeq \inf_{y \in \mc{X}}
  \Big\{F(y) + \frac{\lambda}{2} \ltwo{y - x}^2\Big\}
  ~~
  \mbox{and} ~~
  \prox_{F/\lambda}(x) \defeq \argmin_{y \in \mc{X}}
  \Big\{F(y) + \frac{\lambda}{2} \ltwo{y - x}^2 \Big\}.
\end{equation*}
For large enough $\lambda$, the minimizer $x^\lambda \defeq
\prox_{F/\lambda}(x)$ is unique whenever $F$ is weakly convex. Adopting
the techniques pioneered by \citet{DavisDr19} for convergence
of stochastic methods on weakly convex problems, our
convergence machinery relies on the Moreau envelope's connections
to (near) stationarity:
\begin{equation}
  \label{eqn:moreau-useful}
  \nabla F_\lambda(x) = \lambda(x - x^\lambda),
  ~~~
  F(x^\lambda) \le F(x),
  ~~~
  \dist(0, \partial F(x^\lambda)) \le \ltwo{\nabla F_\lambda(x)}.
\end{equation}
The three properties~\eqref{eqn:moreau-useful} imply that any nearly
stationary point $x$ of $F_\lambda(x)$---when $\ltwos{\nabla F_\lambda(x)}$
is small---is close to a nearly stationary point $x^{\lambda}$ of the
original function $F(\cdot)$.  To prove convergence for weakly convex
methods, then, it is sufficient to show that $\nabla F_\lambda(x_k) \to 0$.


Using full proximal models, it turns out, guarantees convergence.
\begin{theorem}
  \label{theorem:convergence-basic}
  Let Assumption~\ref{assumption:variance-bounded} hold and $\lambda$ be
  large enough that $\E[\weakconvexfunc(\statrv)] < \lambda$, and
  assume that $\inf_{x \in \mc{X}} F(x) > -\infty$ and
  $\E[\weakconvexfunc(\statrv)^2] < \infty$.
  Let $x_k$ be generated by
  the iteration~\eqref{eqn:model-iteration} with the proximal
  model~\eqref{eqn:prox-model}.
  Then there exists a random variable $G_\lambda < \infty$ such that
  \begin{equation*}
    F_\lambda(x_k) \cas G_\lambda
  \end{equation*}
  and $\sum_{k = 1}^\infty \stepsize_k \ltwo{\nabla F_\lambda(x_k)}^2
  < \infty$ with probability one.
\end{theorem}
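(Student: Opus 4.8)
The plan is to show that the Moreau envelope $F_\lambda(x_k)$ is a nonnegative almost-supermartingale that loses roughly $\stepsize_k\ltwo{\nabla F_\lambda(x_k)}^2$ per step, up to a summable perturbation, and then to invoke the Robbins--Siegmund convergence lemma, which yields both conclusions at once. Fix $\lambda$ with $\bar{\weakconvexfunc}\defeq\E[\weakconvexfunc(\statrv)]<\lambda$, so that $F$ is $\bar{\weakconvexfunc}$-weakly convex and $\hat{x}_k\defeq\prox_{F/\lambda}(x_k)\in\mc{X}$ is well-defined; since $x_k\in\mc{F}_{k-1}$, so is $\hat{x}_k$. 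We repeatedly use~\eqref{eqn:moreau-useful}: $\nabla F_\lambda(x_k)=\lambda(x_k-\hat{x}_k)\in\partial F(\hat{x}_k)$, $F(\hat{x}_k)=F_\lambda(x_k)-\tfrac1{2\lambda}\ltwo{\nabla F_\lambda(x_k)}^2\le F(x_k)$, and $F_\lambda(x_{k+1})\le F(\hat{x}_k)+\tfrac\lambda2\ltwo{\hat{x}_k-x_{k+1}}^2$.

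For the proximal model~\eqref{eqn:prox-model}, the subproblem objective $\psi_k(y)\defeq f(y;\statrv_k)+\tfrac{\weakconvexfunc(\statrv_k)}{2}\ltwo{y-x_k}^2+\tfrac1{2\stepsize_k}\ltwo{y-x_k}^2$ is $\stepsize_k^{-1}$-strongly convex and is minimized over $\mc{X}$ by $x_{k+1}$, hence $\psi_k(\hat{x}_k)\ge\psi_k(x_{k+1})+\tfrac1{2\stepsize_k}\ltwo{\hat{x}_k-x_{k+1}}^2$. Expanding, bounding $f(\hat{x}_k;\statrv_k)-f(x_{k+1};\statrv_k)$ from above by the subgradient inequality for $\weakconvexfunc(\statrv_k)$-weak convexity of $f(\cdot;\statrv_k)$ \emph{based at $\hat{x}_k$} (valid for \emph{any} $h_k\in\partial f(\hat{x}_k;\statrv_k)$), and using Young's inequality on $\<h_k,x_k-x_{k+1}\>$, one obtains---on the event $\{\stepsize_k|\weakconvexfunc(\statrv_k)|\le\tfrac12\}$, which by Markov's inequality and $\sum_k\stepsize_k^2<\infty$ holds for all large $k$ almost surely---the pointwise estimate
\begin{equation*}
  \ltwo{\hat{x}_k-x_{k+1}}^2 \;\le\; \frac{1+\stepsize_k\weakconvexfunc(\statrv_k)}{1-\stepsize_k\weakconvexfunc(\statrv_k)}\,\ltwo{\hat{x}_k-x_k}^2 \;+\; \frac{2\stepsize_k}{1-\stepsize_k\weakconvexfunc(\statrv_k)}\,\<h_k,\hat{x}_k-x_k\> \;+\; 2\stepsize_k^2\,\ltwo{h_k}^2 .
\end{equation*}
The key feature is that the stochastic error now sits at the $\mc{F}_{k-1}$-measurable point $\hat{x}_k$, not at $x_k$.

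Since $\nabla F_\lambda(x_k)\in\partial F(\hat{x}_k)$ and $F=\E_P[f(\cdot;\statrv)]$ is an expectation of weakly convex functions, there is a measurable selection $h_k\in\partial f(\hat{x}_k;\statrv_k)$ with $\E[h_k\mid\mc{F}_{k-1}]=\nabla F_\lambda(x_k)$; we use it. Then $\E[\<h_k,\hat{x}_k-x_k\>\mid\mc{F}_{k-1}]=\<\nabla F_\lambda(x_k),\hat{x}_k-x_k\>=-\tfrac1\lambda\ltwo{\nabla F_\lambda(x_k)}^2$, while Assumption~\ref{assumption:variance-bounded} at $\hat{x}_k$ gives $\E[\ltwo{h_k}^2\mid\mc{F}_{k-1}]=\var(f'(\hat{x}_k;\statrv))+\ltwo{\nabla F_\lambda(x_k)}^2\le(1+C_1)\ltwo{\nabla F_\lambda(x_k)}^2+C_2$; and expanding $\tfrac{1+t}{1-t}=1+2t+\tfrac{2t^2}{1-t}$ with $t=\stepsize_k\weakconvexfunc(\statrv_k)$ yields $\E[\tfrac{1+\stepsize_k\weakconvexfunc(\statrv_k)}{1-\stepsize_k\weakconvexfunc(\statrv_k)}\mid\mc{F}_{k-1}]\le1+2\stepsize_k\bar{\weakconvexfunc}+O(\stepsize_k^2\E[\weakconvexfunc(\statrv)^2])$, with the remaining cross terms (from the correlation of $(1-\stepsize_k\weakconvexfunc(\statrv_k))^{-1}$ with $h_k$) bounded by $O(\stepsize_k^2)(\ltwo{\nabla F_\lambda(x_k)}^2+1)$ via Cauchy--Schwarz and $\E[\weakconvexfunc(\statrv)^2]<\infty$. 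Taking $\E[\cdot\mid\mc{F}_{k-1}]$ in the display, multiplying by $\tfrac\lambda2$, adding $F(\hat{x}_k)$, and using $F(\hat{x}_k)+\tfrac\lambda2\ltwo{\hat{x}_k-x_k}^2=F_\lambda(x_k)$ and $\ltwo{\hat{x}_k-x_k}^2=\lambda^{-2}\ltwo{\nabla F_\lambda(x_k)}^2$, then gives for all large $k$, after absorbing the $O(\stepsize_k^2)\ltwo{\nabla F_\lambda(x_k)}^2$ terms into the descent term (legitimate since $\stepsize_k\to0$),
\begin{equation*}
  \E\!\left[F_\lambda(x_{k+1})\mid\mc{F}_{k-1}\right] \;\le\; F_\lambda(x_k) \;-\; \frac{\lambda-\bar{\weakconvexfunc}}{2\lambda}\,\stepsize_k\,\ltwo{\nabla F_\lambda(x_k)}^2 \;+\; c_1\,\stepsize_k^2 ,
\end{equation*}
with $c_1<\infty$ depending only on $\lambda,C_2,\E[\weakconvexfunc(\statrv)^2]$. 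Since $F_\lambda(x_k)-\inf_{x\in\mc{X}}F(x)\ge0$, $\sum_k\stepsize_k^2<\infty$, and $\lambda>\bar{\weakconvexfunc}$, the Robbins--Siegmund almost-supermartingale lemma (started at the almost surely finite index from which the display is valid) gives $F_\lambda(x_k)\cas G_\lambda$ for a finite random variable $G_\lambda$ and $\sum_k\stepsize_k\ltwo{\nabla F_\lambda(x_k)}^2<\infty$ almost surely.

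The crux is the one-step estimate. Under the purely \emph{relative} noise bound of Assumption~\ref{assumption:variance-bounded}---adopted precisely to avoid the global Lipschitz hypotheses of prior work---a direct analysis that compares $x_{k+1}$ to $\hat{x}_k$ via condition~\ref{cond:weak-convex-lower} at $x_k$ produces an error term $\stepsize_k^2\ltwo{f'(x_k;\statrv_k)}^2$, whose conditional expectation is of order $\ltwo{F'(x_k)}^2$; this is neither summable nor dominated by $\ltwo{\nabla F_\lambda(x_k)}^2$ or $F_\lambda(x_k)$ (for $F=\cosh$, $\ltwo{F'(x_k)}$ is exponentially larger than $\ltwo{\nabla F_\lambda(x_k)}$), so the supermartingale argument cannot close. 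Re-centering the comparison at the $\mc{F}_{k-1}$-measurable proximal point $\hat{x}_k$ and choosing the subgradient there to have conditional mean exactly $\nabla F_\lambda(x_k)$---so that Assumption~\ref{assumption:variance-bounded} bounds the noise by $(1+C_1)\ltwo{\nabla F_\lambda(x_k)}^2+C_2$---is what converts the error into a perturbation absorbed by the descent term (its $\ltwo{\nabla F_\lambda(x_k)}^2$ part) plus a summable remainder; the hypothesis $\E[\weakconvexfunc(\statrv)^2]<\infty$ is what keeps the conditional expectations of the random quadratic weights in $\psi_k$ finite.
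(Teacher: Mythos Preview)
Your proposal is correct and follows essentially the same strategy as the paper: prove a one-step recursion for the Moreau envelope by comparing $x_{k+1}$ to the $\mc{F}_{k-1}$-measurable proximal point $\hat{x}_k=x^\lambda_k$, exploit that $\E[f'(\hat{x}_k;\statrv_k)\mid\mc{F}_{k-1}]=F'(\hat{x}_k)=-\lambda(\hat{x}_k-x_k)$ so that Assumption~\ref{assumption:variance-bounded} bounds the noise by $\ltwo{\nabla F_\lambda(x_k)}^2$ plus a constant, and then invoke Robbins--Siegmund. You even correctly isolate the crux in your final paragraph: evaluating at $x_k$ would produce an uncontrollable $\ltwo{F'(x_k)}^2$ term, and re-centering at $\hat{x}_k$ is what makes the relative-noise assumption bite.

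The only real difference is bookkeeping. The paper works directly with the \emph{convex} model $f_{x_k}(y;\statval)=f(y;\statval)+\tfrac{\weakconvexfunc(\statval)}{2}\ltwo{y-x_k}^2$ and applies its Lemma~\ref{lemma:conv-bound} at the point $\hat{x}_k$, so that the relevant subgradient is $f'(\hat{x}_k;\statrv_k)+\weakconvexfunc(\statrv_k)(\hat{x}_k-x_k)$. This yields the recursion~\eqref{eqn:half-a-moreau-recursion} with no random denominator, and after taking conditional expectations one lands immediately on $\E[F_\lambda(x_{k+1})\mid\mc{F}_{k-1}]\le(1+A_\lambda\stepsize_k^2)F_\lambda(x_k)-\lambda\stepsize_k(\lambda-\bar\weakconvexfunc)\ltwo{\hat{x}_k-x_k}^2+\lambda C_2\stepsize_k^2$, valid for all $k$. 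You instead apply the weak-convexity subgradient inequality for $f$ itself at $\hat{x}_k$, which produces the factor $(1-\stepsize_k\weakconvexfunc(\statrv_k))^{-1}$ and forces the event restriction $\{\stepsize_k|\weakconvexfunc(\statrv_k)|\le\tfrac12\}$, the Cauchy--Schwarz handling of cross terms, and the ``start Robbins--Siegmund from a random finite index'' device. All of this is sound, but the paper's route through the convex model is cleaner and avoids these extra steps entirely; you may find it worthwhile to adopt that packaging.
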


The theorem shows that $F_\lambda(x_k)$ is bounded almost surely. Thus, if
the function $F(\cdot)$ is coercive, meaning $F(x) \uparrow \infty $ as
$\norm{x} \to \infty$, then the Moreau envelope $F_\lambda(\cdot)$ is also
coercive, so that we have the following corollary.
\begin{corollary}
  \label{cor:weakly-convex-stability}
  Let the conditions of Theorem~\ref{theorem:convergence-basic} hold and let
  $F$ be coercive. Then
  \begin{equation*}
    \sup_{k \in \N} \dist(x_k, \mc{X}\opt) < \infty
    ~~~\text{with probability 1.}
  \end{equation*}
\end{corollary}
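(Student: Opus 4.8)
The plan is to deduce boundedness of the iterates from the almost sure boundedness of $F_\lambda(x_k)$ guaranteed by Theorem~\ref{theorem:convergence-basic}, using that coercivity of $F$ transfers to its Moreau envelope $F_\lambda$. Since $F_\lambda(x_k) \cas G_\lambda < \infty$ and any convergent real sequence is bounded, on an event of probability one there is a (random) finite constant $M$ with $\sup_k F_\lambda(x_k) \le M$. It therefore suffices to show that the sublevel set $\{x \in \mc{X} : F_\lambda(x) \le M\}$ is bounded for every finite $M$ (i.e., that $F_\lambda$ is coercive); then, invoking non-emptiness of $\mc{X}\opt$ (Assumption~\ref{assumption:bounded-noise}) and fixing any $x\opt \in \mc{X}\opt$, we get $\sup_k \dist(x_k, \mc{X}\opt) \le \sup_k \ltwo{x_k - x\opt} \le \sup_k \ltwo{x_k} + \ltwo{x\opt} < \infty$ on that same probability-one event.

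To see that $F_\lambda$ is coercive, set $F\subopt \defeq \inf_{x \in \mc{X}} F(x)$, which is finite by hypothesis, and recall $\lambda$ is large enough that $y \mapsto F(y) + \frac{\lambda}{2}\ltwo{y - x}^2$ is strongly convex (as $F$ is $\E[\weakconvexfunc(\statrv)]$-weakly convex and $\E[\weakconvexfunc(\statrv)] < \lambda$); hence the proximal map $x^\lambda \defeq \prox_{F/\lambda}(x)$ is well defined and $F_\lambda(x) = F(x^\lambda) + \frac{\lambda}{2}\ltwo{x^\lambda - x}^2$. Suppose $F_\lambda(x) \le M$. On the one hand, $F(x^\lambda) \le F_\lambda(x) \le M$ since $\frac{\lambda}{2}\ltwo{x^\lambda - x}^2 \ge 0$, and coercivity of $F$ implies $\{y \in \mc{X} : F(y) \le M\}$ is bounded, say contained in $\{\ltwo{y} \le R_M\}$. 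On the other hand, $\frac{\lambda}{2}\ltwo{x^\lambda - x}^2 \le F_\lambda(x) - F(x^\lambda) \le M - F\subopt$, so $\ltwo{x^\lambda - x} \le \sqrt{2(M - F\subopt)/\lambda}$. The triangle inequality then gives $\ltwo{x} \le R_M + \sqrt{2(M - F\subopt)/\lambda}$, so $\{x \in \mc{X} : F_\lambda(x) \le M\}$ is bounded, i.e., $F_\lambda(x) \to \infty$ as $\ltwo{x} \to \infty$.

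There is essentially no serious obstacle here: the argument is the deterministic coercivity-transfer computation above applied pathwise on the probability-one event supplied by Theorem~\ref{theorem:convergence-basic}. The only point needing minor care is the well-posedness of $F_\lambda$ and $\prox_{F/\lambda}$ (existence, uniqueness, and attainment of the minimizer), which is exactly where $\E[\weakconvexfunc(\statrv)] < \lambda$ is used: it makes the regularized objective strongly convex, and since the real-valued weakly convex $F$ is continuous on the closed convex set $\mc{X}$, the infimum defining $F_\lambda$ is attained at a unique point.
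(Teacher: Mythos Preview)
Your proposal is correct and follows essentially the same approach as the paper: the paper simply asserts in one line that coercivity of $F$ implies coercivity of $F_\lambda$, and then the corollary follows immediately from the a.s.\ boundedness of $F_\lambda(x_k)$ given by Theorem~\ref{theorem:convergence-basic}. You have supplied the explicit sublevel-set argument for this coercivity transfer, which the paper leaves to the reader, but the underlying idea is identical.
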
	

In parallel with our devlopment of the convex case, stability is sufficient
to develop convergence results for any model-based \aProx method,
highlighting its importance.  Indeed, we can show that stable methods
guarantee various types of convergence to stationary points, though for
probability one convergence of the iterates, we require a slightly elaborate
assumption~\cite[cf.][]{DuchiRu18c, DavisDrKaLe19}, which
rules out some pathological limiting cases.

\begin{assumption}[Weak Sard]
  \label{assumption:weak-sard}
  Let $\mc{X}\stationary = \{x \mid 0 \in \partial F(x) \}$
  be the collection of stationary points of $F$ over $\mc{X}$.
  The Lebesgue measure of the image $F(\mc{X}\stationary)$ is zero.
\end{assumption}
Under this assumption, \aProx methods converge to stationary 
points whenever the iterates are stable.
\begin{proposition}
  \label{proposition:convergence-from-boundedness-weakly-convex}
  Let Assumption~\ref{assumption:weak-second-moment} hold and the iterates
  $x_k$ be generated by any method satisfying
  Conditions~\ref{cond:convex-model}--\ref{cond:subgrad-model}.  Assume that
  $\lambda$ is large enough that $\E[\weakconvexfunc(\statrv)] <
  \lambda$. There exists a finite random variable $G_\lambda$ such that on
  the event that $\sup_k \ltwo{x_k} < \infty$, we have
  \begin{equation}
    \label{eqn:boundedness-to-stationary-convergence}
    \sum_k \stepsize_k
    \ltwo{\nabla F_\lambda(x_k)}^2 < \infty
    ~~~ \mbox{and} ~~~
    F_\lambda(x_k) \to G_\lambda
    ~~~ \mbox{with~probability~}1.
  \end{equation}
  If additionally Assumption~\ref{assumption:weak-sard}
  holds, then $\ltwo{\nabla F_\lambda(x_k)} \cas 0$ and $\dist(x_k,
  \mc{X}\stationary) \cas 0$.
\end{proposition}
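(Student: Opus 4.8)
The plan is to run the standard almost-supermartingale / recursion argument on the Moreau envelope $F_\lambda$, exactly in the spirit of Theorem~\ref{theorem:convergence-basic} but working only with the weak conditions \ref{cond:convex-model}--\ref{cond:subgrad-model} rather than the full proximal model. Fix $\lambda$ with $\E[\weakconvexfunc(\statrv)] < \lambda$ and write $x_k^\lambda = \prox_{F/\lambda}(x_k)$. First I would derive a one-step inequality: starting from the update~\eqref{eqn:model-iteration}, use the strong convexity of the model-plus-quadratic objective to get $\ltwo{x_{k+1} - y}^2 \le \ltwo{x_k - y}^2 - 2\stepsize_k(f_{x_k}(x_{k+1};\statrv_k) - f_{x_k}(y;\statrv_k)) - \ltwo{x_{k+1}-x_k}^2$ for any $y \in \mc{X}$, then take $y = x_k^\lambda$ and invoke the weak lower bound~\ref{cond:weak-convex-lower} together with local accuracy~\ref{cond:subgrad-model} to replace the model values by $f(\cdot;\statrv_k)$ plus a $\weakconvexfunc(\statrv_k)$-weak-convexity penalty. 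After adding $\frac{\lambda}{2}\ltwo{x_{k+1}-x_k}^2$-type bookkeeping and using the defining property $\nabla F_\lambda(x_k) = \lambda(x_k - x_k^\lambda)$ from~\eqref{eqn:moreau-useful}, this should yield something of the form
\begin{equation*}
  \E\big[ F_\lambda(x_{k+1}) \mid \mc{F}_{k-1}\big]
  \le F_\lambda(x_k) - c\,\stepsize_k \ltwo{\nabla F_\lambda(x_k)}^2
  + \stepsize_k^2\, \big(\text{noise term}\big),
\end{equation*}
valid on the event $\{\sup_k \ltwo{x_k} < \infty\}$, where the noise term is controlled via Assumption~\ref{assumption:weak-second-moment}: on a bounded trajectory $\E[\ltwos{f'(x_k;\statrv)}^2 \mid \mc{F}_{k-1}] \le \growfunc(\sup_j \ltwo{x_j}) < \infty$, and $\E[\weakconvexfunc(\statrv)]<\lambda$ supplies the contraction constant $c>0$.

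The technical nuisance is that we must work on the random event $B = \{\sup_k \ltwo{x_k} < \infty\}$, which is not $\mc{F}_k$-measurable for any finite $k$, so the recursion is not literally a supermartingale inequality on all of $\Omega$. I would handle this by a localization / stopping-time argument: for each $R$ let $\tau_R = \inf\{k : \ltwo{x_k} > R\}$, run the recursion for the stopped process $x_{k\wedge\tau_R}$ (on which all conditional moments are bounded by constants depending on $R$), apply the Robbins--Siegmund almost-supermartingale convergence theorem to conclude that on $\{\tau_R = \infty\}$ both $F_\lambda(x_k)$ converges and $\sum_k \stepsize_k \ltwo{\nabla F_\lambda(x_k)}^2 < \infty$, and finally take $R \to \infty$ using $B = \bigcup_R \{\tau_R = \infty\}$ up to a null set. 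Since $\inf_{\mc{X}} F > -\infty$ is not assumed here, I would instead note that the almost-supermartingale bound plus $F_\lambda(x_k) \ge F(x_k^\lambda)$ and boundedness of $x_k^\lambda$ (which follows from boundedness of $x_k$) keeps $F_\lambda(x_k)$ bounded below on $B$, so Robbins--Siegmund still applies; this gives the first display~\eqref{eqn:boundedness-to-stationary-convergence}, with $G_\lambda = \lim_k F_\lambda(x_k)$.

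For the second part, assuming additionally Weak Sard: from $\sum_k \stepsize_k = \infty$ (the stepsizes are square-summable but not summable, being a valid \aProx stepsize sequence) and $\sum_k \stepsize_k \ltwo{\nabla F_\lambda(x_k)}^2 < \infty$ we get $\liminf_k \ltwo{\nabla F_\lambda(x_k)} = 0$. To upgrade $\liminf$ to $\lim$, I would show $\ltwo{\nabla F_\lambda(x_k)}$ cannot oscillate: using $\nabla F_\lambda$ Lipschitz (it is $\lambda$-Lipschitz, $2\lambda$ up to the weak-convexity correction) together with $\E[\ltwos{x_{k+1}-x_k}^2 \mid \mc{F}_{k-1}] \lesssim \stepsize_k^2$ on $B$, a standard "between consecutive near-zero crossings the function value of $F_\lambda$ must drop by a fixed amount, contradicting convergence of $F_\lambda(x_k)$" argument forces $\ltwo{\nabla F_\lambda(x_k)} \to 0$ a.s. on $B$. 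Hence $\lambda\ltwo{x_k - x_k^\lambda} \to 0$, so $\dist(x_k, \mc{X}\stationary)$ and $\dist(x_k^\lambda, \mc{X}\stationary)$ have the same limiting behavior; since $G_\lambda = \lim F_\lambda(x_k) = \lim F(x_k^\lambda)$ and $\dist(0,\partial F(x_k^\lambda)) \le \ltwos{\nabla F_\lambda(x_k)} \to 0$, any limit point of $x_k^\lambda$ is stationary with $F$-value $G_\lambda$; Weak Sard (the Lebesgue-null image of the stationary set) rules out the limit set of $x_k^\lambda$ containing stationary points of differing $F$-values in a way that would prevent $\dist(x_k^\lambda, \mc{X}\stationary) \to 0$, giving $\dist(x_k,\mc{X}\stationary) \cas 0$.

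The main obstacle I anticipate is the localization bookkeeping together with cleanly extracting the contraction term $-c\stepsize_k\ltwo{\nabla F_\lambda(x_k)}^2$ from the model inequality using only \ref{cond:convex-model}--\ref{cond:subgrad-model} (rather than the exact proximal model, where this step is comparatively transparent as in Theorem~\ref{theorem:convergence-basic}); the subtlety is that the model $f_{x_k}$ is only a \emph{lower} model, so one must be careful that the cross term $f_{x_k}(x_{k+1};\statrv_k) - f_{x_k}(x_k^\lambda;\statrv_k)$ is bounded below by the corresponding true-function difference only after paying the $\weakconvexfunc(\statrv_k)$ penalty, and that this penalty is absorbed because $\E[\weakconvexfunc(\statrv)] < \lambda$. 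The upgrade from $\liminf$ to $\lim$ and the final Weak-Sard step are routine given the machinery of \citet{DavisDr19} and \citet{DuchiRu18c}.
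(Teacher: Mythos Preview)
Your treatment of the first claim~\eqref{eqn:boundedness-to-stationary-convergence} matches the paper's: both derive a one-step inequality on $F_\lambda$ from Lemma~\ref{lemma:single-step-progress} and Conditions~\ref{cond:convex-model}--\ref{cond:subgrad-model}, bound the noise via $\growfunc$ on the bounded event, and apply Robbins--Siegmund. Your explicit localization via the stopping times $\tau_R$ is a welcome clarification; the paper simply writes ``$\growfunc(\ltwo{x_k}) \le B$ for some (random) $B$'' and invokes Lemma~\ref{lemma:robbins-siegmund} directly.

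For the second claim your route diverges from the paper's, and the sketch has a gap. Your plan is to upgrade $\liminf_k \ltwo{\nabla F_\lambda(x_k)} = 0$ to $\lim_k \ltwo{\nabla F_\lambda(x_k)} = 0$ by arguing that ``between consecutive near-zero crossings the function value of $F_\lambda$ must drop by a fixed amount, contradicting convergence of $F_\lambda(x_k)$.'' That descent-between-crossings argument is deterministic: here $F_\lambda(x_k)$ is only an almost-supermartingale and can increase along individual sample paths, so there is no guaranteed drop. An a.s.\ oscillation argument can be salvaged, but it requires a separate martingale-convergence step to control $\sum_k (\ltwo{x_{k+1}-x_k} - \E[\ltwo{x_{k+1}-x_k}\mid\mc{F}_{k-1}])$ and then show each excursion contributes a uniformly positive amount to $\sum_k \stepsize_k \ltwo{\nabla F_\lambda(x_k)}^2$; this is not the mechanism you describe. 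More tellingly, if your oscillation argument succeeded, Weak Sard would be \emph{unnecessary}: once $\ltwo{\nabla F_\lambda(x_k)} \to 0$, the closed-graph property of $\partial F$ and boundedness of $x_k^\lambda$ already force every limit point into $\mc{X}\stationary$, so $\dist(x_k, \mc{X}\stationary) \to 0$ with no further hypothesis. Your stated use of Weak Sard (``rules out the limit set containing stationary points of differing $F$-values'') is therefore vacuous.

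The paper instead uses Weak Sard essentially and never attempts the $\liminf \to \lim$ upgrade directly. It first shows $\dist(F_\lambda(x_k), F(\mc{X}\stationary)) \to 0$ by a compactness contradiction: if the limit $G_\lambda$ stayed a positive distance from $F(\mc{X}\stationary)$, then on a compact preimage $\ltwo{\nabla F_\lambda(\cdot)}$ would be bounded below, contradicting $\sum_k \stepsize_k \ltwo{\nabla F_\lambda(x_k)}^2 < \infty$ and $\sum_k \stepsize_k = \infty$. Thus $G_\lambda \in F(\mc{X}\stationary)$ for every $\lambda > \E[\weakconvexfunc(\statrv)]$. Since $\lambda \mapsto G_\lambda = F_\lambda(x_\infty)$ is monotone and continuous and its range lies in the Lebesgue-null set $F(\mc{X}\stationary)$ (this is where Weak Sard enters), $G_\lambda$ must be constant in $\lambda$. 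A short envelope lemma (Lemma~\ref{lemma:constant-envelopes-means-min}) then shows that any point whose Moreau envelope is constant in $\lambda$ is stationary, so every limit point of $x_k$ lies in $\mc{X}\stationary$; the conclusion $\ltwo{\nabla F_\lambda(x_k)} \to 0$ follows a posteriori by continuity of $\nabla F_\lambda$.
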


In passing, we note that the finite sum
condition~\eqref{eqn:boundedness-to-stationary-convergence} is enough to
develop a type of conditional $\ell_2$-convergence, which is similar to the
non-asymptotic rates of convergence that stochastic (sub)gradient methods
achieve to stationary points~\cite{DavisDr19, GhadimiLa13}. Indeed, assume
$\stepsize_k = \stepsize_0 k^{-\steppow}$ for some $\steppow \in (\half, 1)$
and that the iterates $x_k$ are stable. Now let $I_k$ be an index chosen from
$\{1, \ldots, k\}$ with probabilities $P(I_k = i) = \stepsize_i / \sum_{j=1}^k
\stepsize_j$. Then
inequality~\eqref{eqn:boundedness-to-stationary-convergence} shows
that
\begin{equation*}
  \limsup_k
  k^{1 - \steppow} \E\left[\ltwo{\nabla F_\lambda(x_{I_k})}^2 \mid \mc{F}_k\right]
  < \infty ~~ \mbox{with~probability~1}.
\end{equation*}
This provides an asymptotic analogue of the convergence rates that
stochastic model-based methods achieve on Lipschitzian
functions~\cite{DavisDr19}.


\section{Fast convergence for easy problems}
\label{sec:aProx-adv-fast-conv}

In many engineering and learning applications, solutions \emph{interpolate}
the data. Consider, for example, signal recovery problems with $b = Ax\opt$,
or modern machine learning applications, where frequently training error is
zero~\cite{LeCunBeHi15, BelkinHsMi18}. We consider such problems here,
showing how models that satisfy the lower bound
condition~\ref{cond:lower-by-optimal} enjoy linear convergence, extending
our earlier results~\cite{AsiDu18} beyond convex optimization. We begin with
a
\begin{definition}
  \label{definition:easy-problems}
  Let $F(x) \defeq \E_P[f(x; \statrv)]$. Then $F$ is \emph{easy to optimize}
  if for each $x\opt \in \mc{X}\opt \defeq \argmin_{x \in \mc{X}} F(x)$ and
  $P$-almost all $\statval \in \statdomain$ we have
  \begin{equation*}
    \inf_{x \in \mc{X}} f(x; \statval) = f(x\opt; \statval).
  \end{equation*}
\end{definition}

For such problems, we can guarantee progress toward minimizers
as long as $f$ grows quickly enough away from $x\opt$, as the following
lemma (generalizing our result~\cite[Lemma~4.1]{AsiDu18}) shows.
\begin{lemma}
  \label{lemma:shared-min-progress}
  Let $F$ be easy to optimize (Definition~\ref{definition:easy-problems}).
  Let $x_k$ be generated by the updates~\eqref{eqn:model-iteration} using a
  model satisfying
  Conditions~\ref{cond:convex-model}--\ref{cond:lower-by-optimal}.  Then for
  any $x\opt \in \mc{X}\opt$,
  \begin{equation*}
    \ltwo{x_{k+1} - x\opt}^2
    \le (1 + \stepsize_k \weakconvexfunc(\statrv_k))
    \ltwo{x_k - x\opt}^2
    - [f(x_k; \statrv_k) - f(x\opt; \statrv_k)]\min\left\{
    \stepsize_k,
    \frac{f(x_k; \statrv_k) - f(x\opt; \statrv_k)}{
      \ltwo{f'(x_k; \statrv_k)}^2}\right\}.
  \end{equation*}
\end{lemma}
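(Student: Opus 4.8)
The plan is to run the standard proximal-point three-point inequality and then isolate the per-step progress by lower bounding the optimal \emph{value} of the proximal subproblem via a truncated affine minorant. Fix an iteration $k$ and, to lighten notation, write $\statval = \statrv_k$, $\stepsize = \stepsize_k$, $\rho = \weakconvexfunc(\statval)$, and let $f' \in \partial_y f_{x_k}(y;\statval)|_{y = x_k}$, so $f' \in \partial f(x_k;\statval)$ by Condition~\ref{cond:subgrad-model}. Set $\phi(x) \defeq f_{x_k}(x;\statval) + \frac{1}{2\stepsize}\ltwo{x - x_k}^2$, which is $\frac{1}{\stepsize}$-strongly convex by Condition~\ref{cond:convex-model} and has unique minimizer $x_{k+1} = \argmin_{x \in \mc{X}}\phi(x)$. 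Since $F$ is easy to optimize, $f(x\opt;\statval) = \inf_{z \in \mc{X}} f(z;\statval) \le f(x_k;\statval)$, so $\Delta \defeq f(x_k;\statval) - f(x\opt;\statval) \ge 0$; the claim is trivial when $\Delta = 0$, so assume $\Delta > 0$.

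First I would invoke strong convexity of $\phi$ at its minimizer, $\phi(x\opt) \ge \phi(x_{k+1}) + \frac{1}{2\stepsize}\ltwo{x\opt - x_{k+1}}^2$. Expanding $\phi$, multiplying through by $2\stepsize$, regrouping $f_{x_k}(x_{k+1};\statval) + \frac{1}{2\stepsize}\ltwo{x_{k+1}-x_k}^2 = \phi(x_{k+1}) = \min_{x\in\mc{X}}\phi(x)$, and bounding $f_{x_k}(x\opt;\statval) \le f(x\opt;\statval) + \frac{\rho}{2}\ltwo{x\opt - x_k}^2$ via Condition~\ref{cond:weak-convex-lower}, I get
\[
  \ltwo{x_{k+1} - x\opt}^2
  \le (1 + \stepsize\rho)\ltwo{x_k - x\opt}^2
  + 2\stepsize\Big[f(x\opt;\statval) - \min_{x \in \mc{X}}\phi(x)\Big].
\]
Comparing with the target, it remains to prove $\min_{x \in \mc{X}}\phi(x) \ge f(x\opt;\statval) + \tfrac{1}{2}\Delta\min\{1,\, \Delta/(\stepsize\ltwo{f'}^2)\}$, because then $2\stepsize$ times the bracket is at most $-\stepsize\Delta\min\{1,\Delta/(\stepsize\ltwo{f'}^2)\} = -\Delta\min\{\stepsize, \Delta/\ltwo{f'}^2\}$, which is exactly the stated decrease.

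For this lower bound, I would build a tractable minorant of $\phi$. By convexity of $f_{x_k}(\cdot;\statval)$ and $f_{x_k}(x_k;\statval) = f(x_k;\statval)$ (Condition~\ref{cond:subgrad-model}), the affine function $\ell(x) \defeq f(x_k;\statval) + \<f', x - x_k\>$ satisfies $\ell(x) \le f_{x_k}(x;\statval)$; and Condition~\ref{cond:lower-by-optimal} together with easy-to-optimize gives $f(x\opt;\statval) = \inf_z f(z;\statval) \le f_{x_k}(x;\statval)$. Hence $\phi(x) \ge \max\{\ell(x), f(x\opt;\statval)\} + \frac{1}{2\stepsize}\ltwo{x - x_k}^2$ for all $x$, so $\min_{x\in\mc{X}}\phi \ge \min_{x \in \R^n}\big[\max\{\ell(x), f(x\opt;\statval)\} + \frac{1}{2\stepsize}\ltwo{x - x_k}^2\big]$. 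This last unconstrained minimization reduces to the one-dimensional problem $\min_{r \ge 0}\{\max\{f(x_k;\statval) - r\ltwo{f'},\, f(x\opt;\statval)\} + r^2/(2\stepsize)\}$ along the ray $x_k - r f'/\ltwo{f'}$ (the degenerate case $f' = 0$ being immediate); a direct calculation comparing the breakpoint $r_0 = \Delta/\ltwo{f'}$ of the inner max with the unconstrained minimizer $\stepsize\ltwo{f'}$ of the first branch evaluates it to $f(x\opt;\statval) + \frac{\Delta^2}{2\stepsize\ltwo{f'}^2}$ when $\Delta \le \stepsize\ltwo{f'}^2$ and to a quantity no smaller than $f(x\opt;\statval) + \frac{\Delta}{2}$ otherwise — precisely the claimed bound. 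Substituting $\Delta$, $\rho$, $f'$, $\stepsize$ back in finishes the proof.

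I expect the main obstacle to be this last step: recognizing that the right object to control is the \emph{optimal value} of the prox subproblem (the natural quantities $\ltwo{x_{k+1}-x_k}$ or $f_{x_k}(x_{k+1};\statval)$ only admit \emph{upper} bounds here), and that the correct minorant is the truncated linear model, whose explicit minimization is what manufactures the $\min\{\stepsize, \Delta/\ltwo{f'}^2\}$ dichotomy. It is worth flagging that the easy-to-optimize hypothesis is used twice and essentially — once to make $\Delta \ge 0$, and once to identify the truncation level $\inf_z f(z;\statval)$ in Condition~\ref{cond:lower-by-optimal} with the value $f(x\opt;\statval)$ that must appear in the target inequality; without it one only recovers progress of the objective \emph{value} toward $\inf_z f(z;\statval)$, not of the iterate toward $x\opt$.
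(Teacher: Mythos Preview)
Your proof is correct and follows essentially the same route as the paper's. Both arguments start from the strong-convexity (three-point) inequality for the proximal subproblem, bound $f_{x_k}(x\opt;\statval)$ via Condition~\ref{cond:weak-convex-lower}, then lower bound the model $f_{x_k}$ by the truncated affine minorant $\max\{f(x_k;\statval)+\<f',\cdot-x_k\>,\,f(x\opt;\statval)\}$ and compute the resulting one-dimensional quadratic-plus-hinge minimum explicitly to produce the $\min\{\stepsize_k,\Delta/\ltwo{f'}^2\}$ dichotomy. Your packaging---isolating $\min_{x\in\mc{X}}\phi(x)$ as the single scalar to control---is slightly cleaner than the paper's, which carries the two terms $-\stepsize_k f_{x_k}(x_{k+1};\statrv_k)-\tfrac{1}{2}\ltwo{x_{k+1}-x_k}^2$ separately before introducing the unconstrained minimizer $\wt{x}_{k+1}=x_k-\lambda_k g_k$ with $\lambda_k=\min\{\stepsize_k,f_k/\ltwo{g_k}^2\}$ and evaluating there; but the content is identical.
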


\noindent
Lemma~\ref{lemma:shared-min-progress} allows us to prove fast convergence
so long as $f$ grows quickly enough away from $x\opt$; a sufficient
condition for us is a so-called \emph{sharp growth} condition away
from the optimal set $\mc{X}\opt$. To meld with the progress guarantee
in Lemma~\ref{lemma:shared-min-progress}, we consider the following
assumption. 
\begin{assumption}[Expected sharp growth]
  \label{assumption:expected-sharp-growth}
  There exist constants $\lambda_0, \lambda_1 > 0$ such that
  for all $\stepsize \in \R_+$ and $x \in \mc{X}$
  and $x\opt \in \mc{X}\opt$,
  \begin{align*}
    \E\left[
      \min\left\{\stepsize, \frac{f(x; \statrv) - f(x\opt; \statrv)}{
	\ltwos{f'(x; \statrv)}^2} \right\}
      (f(x;\statrv) - f(x\opt; \statrv)) \right] 
    \ge \dist(x, \mc{X}\opt) \min\left\{\lambda_0 \stepsize,
    \lambda_1 \dist(x, \mc{X}\opt)\right\}.
  \end{align*}
\end{assumption}

Assumption~\ref{assumption:expected-sharp-growth} is perhaps
too-tailored to Lemma~\ref{lemma:shared-min-progress} for
obvious use, so we discuss a few situations where it holds.
One simple sufficient condition is the
small-ball-type condition that
there exists $C < \infty$ such that
$\P(f(x; \statrv) - f(x\opt; \statrv) \ge \epsilon \dist(x, \mc{X}\opt))
\ge 1 - C \epsilon$ for $\epsilon > 0$ and that
$\E[\ltwo{f'(x; \statrv)}^2] \le C(1 + \dist(x, \mc{X}\opt)^2)$.
We can also be more explicit:

\begin{example}[Phase retrieval, example~\ref{example:PR} continued]
  \label{example:finite-sample-pr}
  Consider the (real-valued) phase retrieval problem with objective
  $f(x;(a,b)) = | \<a,x\>^2 - b|$.  Let us assume the vectors $a_i \in \R^n$
  are drawn from a distribution satisfying the small ball condition
  $P(|\<a_i,u\>| \ge \epsilon \ltwo{u}) \ge 1-\epsilon$ for $\epsilon > 0$
  and any $u \in \R^n$, and additionally that $\E[\ltwo{a_i}^2] \le M^2 n$
  and $\E[ \<a_i,x\>^2] \le M^2 \ltwo{x}^2$ for some $M < \infty$. For each
  $i$, define the events
  \begin{align*}
    E_{1,i}(x)
    & \defeq \left\{ \left|\<a_i,x\>^2 - \<a_i,x\opt\>^2 \right| \ge \epsilon^2
    \dist(x,\mc{X}\opt) \sqrt{\ltwo{x}^2 + \ltwo{x\opt}^2 } \right\},
    \\ E_{2,i}(x) &
    \defeq \left\{ \ltwo{a_i}^2 \le 8 M^2n\right\},
    ~~~
    E_{3,i}(x) \defeq \left\{\<a_i,x\>^2 \le 8 M^2\ltwo{x}^2 \right\}
  \end{align*}
  The small ball condition implies $P(E_{1,i}(x)) \ge 1 - 2 \epsilon$,
  and Markov's inequality gives $P(E_{2,i}(x)) \ge \frac{7}{8}$ and $P(E_{3,i}(x))
  \ge \frac{7}{8}$ for all $x \in \R^n$.
  Letting $E_i(x) = 1$ if $E_{1,i}(x), E_{2,i}(x), E_{3,i}(x)$ each occur
  and $E_i(x) = 0$ otherwise,
  a VC-dimension calculation (see, for example,
  \cite[Appendix~A]{DuchiRu18a}) thus implies that
  with probability at least $1 - e^{-t}$,
  we have
  \begin{equation}
    \label{eqn:dataset-for-pr}
    \inf_{x \in \R^n} \frac{1}{m}
    \sum_{i = 1}^m E_i(x)
    \ge \frac{3}{4} - 2 \epsilon - C \sqrt{\frac{n + t}{m}}.
  \end{equation}
  
  Let $\epsilon = \frac{1}{8}$ for simplicity. Then
  evidently with high probability over the draw of a random
  data matrix $A \in \R^{m \times n}$ and $b = |Ax\opt|^2$,
  the rows $a_i$
  satisfy
  condition~\eqref{eqn:dataset-for-pr}, so we have
  \begin{align*}
    \lefteqn{\frac{1}{m} \sum_{i = 1}^m
      \min\left\{\stepsize, \frac{f(x; (a_i, b_i)) - f(x\opt; (a_i, b_i))}{
        \ltwo{f'(x; (a_i, b_i))}^2}\right\}
      \left[f(x; (a_i, b_i)) - f(x\opt; (a_i, b_i))\right]} \\
    & \ge
    \frac{1}{m} \sum_{i : E_i(x) = 1}
    \min\left\{\stepsize, \frac{\epsilon^2 \dist(x, \mc{X}\opt)
      \sqrt{\ltwo{x}^2 + \ltwo{x\opt}^2}}{
      64 M^4 n \ltwo{x}^2}\right\}
    \epsilon^2 \dist(x, \mc{X}\opt)
    \sqrt{\ltwo{x}^2 + \ltwo{x\opt}^2} \\
    & \ge c \dist(x, \mc{X}\opt) \cdot \min\left\{ \ltwo{x\opt} \stepsize,
    \frac{c}{M^4 n} \dist(x, \mc{X}\opt)\right\}
  \end{align*}
  for a numerical constant $c > 0$.
  On this finite sample of size $m$,
  Assumption~\ref{assumption:polynomial-growth} holds
  for the empirical objective $F(x) = \frac{1}{m}
  \sum_{i = 1}^m f(x; (a_i, b_i))$
  with $\lambda_0 = c \ltwo{x\opt}$,
  and $\lambda_1 = \frac{c}{16M^4 n}$.
\end{example}

The following proposition is our main result
in this section, showing that lower
bounded models enjoy linear convergence on easy problems.
\begin{proposition}
  \label{proposition:sharp-growth-convex-convergence}
  Let Assumption~\ref{assumption:expected-sharp-growth} hold and $x_k$ be
  generated by the stochastic iteration~\eqref{eqn:model-iteration} using
  any model satisfying
  Conditions~\ref{cond:convex-model}--\ref{cond:lower-by-optimal}, where the
  stepsizes $\stepsize_k$ satisfy $\stepsize_k = \stepsize_0 k^{-\beta}$ for
  some $\beta \in (0, 1)$.
  If $f(\cdot;\statrv_k)$ is $\weakconvexfunc(\statrv_k)$-weakly
  convex with $\E [\weakconvexfunc(\statrv_k)] = \wb{\weakconvexfunc}$,
  then for any $m \in \N$ and $\epsilon > 0$, there exists a finite random
  variable $V_{\infty,m} < \infty$ such that
  \begin{equation*}
    \frac{\dist(x_k, \mc{X}\opt)^2}{(1 - \lambda_1)^k}
    \cdot \indic{\max_{m \le i \le k-1} \dist(x_i, \mc{X}\opt)
      \le \frac{\lambda_0}{(1 + \epsilon) \wb{\weakconvexfunc}}}
    \cas V_{\infty, m}.
  \end{equation*}
\end{proposition}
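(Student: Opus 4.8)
The plan is to convert Lemma~\ref{lemma:shared-min-progress} together with Assumption~\ref{assumption:expected-sharp-growth} into a one-step recursion for $D_k^2 := \dist(x_k,\mc{X}\opt)^2$, restrict attention to the event that every iterate after time $m$ stays inside the ball of radius $r\subopt := \lambda_0/((1+\epsilon)\wb{\weakconvexfunc})$ about $\mc{X}\opt$, show that on this event the sharp-growth decrement dominates the weak-convexity inflation, and then run a supermartingale convergence argument on the rescaled process $W_k := D_k^2/(1-\lambda_1)^k$. Concretely, I would apply Lemma~\ref{lemma:shared-min-progress} with $x\opt$ the projection of $x_k$ onto $\mc{X}\opt$, so that $\ltwo{x_k - x\opt} = D_k$ and $D_{k+1}^2 \le \ltwo{x_{k+1} - x\opt}^2$, giving the pathwise bound $D_{k+1}^2 \le (1 + \stepsize_k\weakconvexfunc(\statrv_k))D_k^2 - U_k$ with $U_k := [f(x_k;\statrv_k) - f(x\opt;\statrv_k)]\min\{\stepsize_k,\,(f(x_k;\statrv_k)-f(x\opt;\statrv_k))/\ltwo{f'(x_k;\statrv_k)}^2\} \ge 0$. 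Taking $\E[\,\cdot \mid \mc{F}_{k-1}]$ (valid since $x_k$ and its projection are $\mc{F}_{k-1}$-measurable), using $\E[\weakconvexfunc(\statrv_k)] = \wb{\weakconvexfunc}$, and applying Assumption~\ref{assumption:expected-sharp-growth} at $x = x_k$, $\stepsize = \stepsize_k$ produces
\[
  \E[D_{k+1}^2 \mid \mc{F}_{k-1}] \le (1 + \stepsize_k\wb{\weakconvexfunc})D_k^2 - D_k\min\{\lambda_0\stepsize_k,\,\lambda_1 D_k\}.
\]

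The key step is to exploit the radius $r\subopt$ to absorb the weak-convexity factor. On $\{D_k \le r\subopt\}$ we have $\wb{\weakconvexfunc} D_k \le \lambda_0/(1+\epsilon)$, so when $\min\{\lambda_0\stepsize_k,\lambda_1 D_k\} = \lambda_0\stepsize_k$ the inflation obeys $\stepsize_k\wb{\weakconvexfunc}D_k^2 \le \tfrac{1}{1+\epsilon}\lambda_0\stepsize_k D_k$, and when the minimum is $\lambda_1 D_k$ (the ``sharp'' regime $D_k \le \lambda_0\stepsize_k/\lambda_1$) we get $\stepsize_k\wb{\weakconvexfunc}D_k^2 \le \tfrac{1}{1+\epsilon}\lambda_1 D_k^2$ for $k$ past a deterministic index $k_0(\stepsize_0,\wb{\weakconvexfunc},\lambda_1,\epsilon)$. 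Introducing the stopping time $\tau_m := \inf\{k \ge m : D_k > r\subopt\}$ and multiplying through by $\indic{\tau_m > k}$, this yields, for $k \ge \max\{m,k_0\}$, a genuine conditional supermartingale inequality in which $D_k^2\indic{\tau_m>k}$ drops in expectation by at least $\tfrac{\epsilon}{1+\epsilon}D_k\min\{\lambda_0\stepsize_k,\lambda_1 D_k\}\indic{\tau_m>k}$. The Robbins--Siegmund lemma then shows that on $\{\tau_m = \infty\}$ --- which is precisely the event on which the indicator in the statement equals $1$ for every $k$ --- the quantity $D_k^2$ converges a.s.\ and $\sum_k D_k\min\{\lambda_0\stepsize_k,\lambda_1 D_k\} < \infty$; since $\beta<1$ forces $\sum_k\stepsize_k = \infty$, this in turn forces $D_k \to 0$ and, with a little further argument about when $D_k$ falls below $\lambda_0\stepsize_k/\lambda_1$, eventually places the recursion in the sharp regime, where $\E[D_{k+1}^2\mid\mc{F}_{k-1}] \le (1-\lambda_1)D_k^2 + \stepsize_k\wb{\weakconvexfunc}D_k^2$ and in particular $\sum_k D_k^2 < \infty$ on $\{\tau_m=\infty\}$.

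It remains to pass to $W_k = D_k^2/(1-\lambda_1)^k$: the sharp-regime recursion reads $\E[W_{k+1}\mid\mc{F}_{k-1}] \le W_k + \tfrac{\stepsize_k\wb{\weakconvexfunc}}{1-\lambda_1}W_k$, and splitting the update into its conditional mean plus a martingale difference and invoking the Robbins--Siegmund lemma (equivalently, after a discounting factor, the nonnegative-supermartingale convergence theorem) gives $W_k \cas V_{\infty,m}$ with $V_{\infty,m} < \infty$, and $V_{\infty,m} = 0$ on $\{\tau_m < \infty\}$ since the indicator there is eventually $0$. \textbf{I expect the main obstacle to be exactly the last point --- proving $V_{\infty,m}$ is finite.} The bare multiplicative perturbation $\tfrac{\stepsize_k\wb{\weakconvexfunc}}{1-\lambda_1}W_k$ is not summable because $\sum_k\stepsize_k = \infty$, so Robbins--Siegmund does not apply to it directly; one must instead combine, inside the sharp regime, the bound $D_k^2 \le (\lambda_0\stepsize_k/\lambda_1)D_k$ with the $\ell_1$-summability $\sum_k D_k^2 < \infty$ already obtained on $\{\tau_m=\infty\}$ to recast the perturbation as a summable additive error after division by $(1-\lambda_1)^k$. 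The purely convex sub-case $\wb{\weakconvexfunc}\le 0$ is strictly easier, since there $W_k$ is directly a nonnegative supermartingale on the sharp regime and convergence is immediate.
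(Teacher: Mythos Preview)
Your opening reduction matches the paper's: the same one-step recursion
\[
\E[D_{k+1}^2\mid\mc{F}_{k-1}] \;\le\; (1+\stepsize_k\wb{\weakconvexfunc})\,D_k^2 \;-\; D_k\min\{\lambda_0\stepsize_k,\,\lambda_1 D_k\}
\]
via Lemma~\ref{lemma:shared-min-progress} and Assumption~\ref{assumption:expected-sharp-growth}, and the same use of the radius $\lambda_0/((1+\epsilon)\wb{\weakconvexfunc})$ to dominate the weak-convexity inflation by the sharp-growth decrement on the indicator event.

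The gap you flag at the end is genuine, and your proposed fix does not close it. The additive error in the $W_k$-recursion is $\stepsize_k\wb{\weakconvexfunc}D_k^2/(1-\lambda_1)^{k+1}$, and making this summable already requires $D_k^2 = O((1-\lambda_1)^k)$---the conclusion itself. Neither $\sum_k D_k^2 < \infty$ nor the sharp-regime estimate $D_k \le \lambda_0\stepsize_k/\lambda_1$ can offset a geometrically growing prefactor, so Robbins--Siegmund is not applicable to $W_k$ on those grounds; the argument is circular.

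The paper takes a different route at precisely this point. Rather than appealing to Robbins--Siegmund for a qualitative $D_k\to 0$, it first \emph{iterates the one-step contraction in expectation} to obtain the unconditional bound
\[
\E\!\left[D_k^2\,\indic{\max_{m\le i<k}D_i \le \tfrac{\lambda_0}{(1+\epsilon)\wb{\weakconvexfunc}}}\right] \;\le\; C\exp\!\big(-c\,k^{1-\beta}\big),
\]
then applies Markov's inequality and Borel--Cantelli to conclude that, on the indicator event, $D_k/\stepsize_k \cas 0$. This pathwise statement---that $D_k$ is eventually $o(\stepsize_k)$ on the event in question---is the ingredient your outline lacks. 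With it in hand, the multiplicative correction to $V_{k-1,m}$ in the rescaled recursion is eventually $\le 1$ (not merely $1$ plus a non-summable term), so that $\E[V_{k,m}\mid\mc{F}_{k-1}] \le V_{k-1,m} + E_{k-1,m}$ with $E_{k-1,m}=0$ for all large $k$, and Lemma~\ref{lemma:robbins-siegmund} applies directly. Your Robbins--Siegmund step delivers only $D_k\to 0$ with no rate relative to $\stepsize_k$, which is insufficient to force the perturbation to vanish.
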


When the functions $f$ are convex, we have $\wb{\weakconvexfunc} = 0$,
so that Proposition~\ref{proposition:sharp-growth-convex-convergence}
guarantees linear convergence for easy problems.
In the case that $\wb{\weakconvexfunc} > 0$,
the result is conditional: \emph{if} an \aProx method converges to one of
the sharp minimizers of $f$, then this convergence is linear
(i.e.\ geometrically fast). In the case of
phase retrieval, we can guarantee
convergence:

\begin{example*}[Phase retrieval, Example~\ref{example:finite-sample-pr}
    continued] With $F(x) = \frac{1}{m} \lone{|Ax|^2 -
    |Ax\opt|^2}$, the conditions in
  Example~\ref{example:finite-sample-pr} guarantee that with
  high probability
  $F(x) \ge \lambda \ltwo{x - x\opt} \ltwo{x + x\opt}$ for a numerical
  constant $\lambda > 0$ and that $F$ is $\weakconvexfunc$-weakly convex for
  $\weakconvexfunc$ a numerical constant~\cite{DuchiRu18a}. In this
  case, an integration argument~\cite{DavisDrMaPa18}
  guarantees that
  \begin{equation*}
    \<F'(x), x - x\opt\> \ge \lambda \ltwo{x - x\opt} \ltwo{x + x\opt}
    - \frac{\weakconvexfunc}{2} \ltwo{x - x\opt}^2.
  \end{equation*}
  If $\ltwo{x - x\opt} \le \ltwo{x\opt}$, then $\ltwo{x + x\opt} \ge
  \ltwo{x\opt}$, so $\<F'(x), x - x\opt\> > 0$ whenever $\ltwo{x - x\opt} <
  \frac{2 \lambda}{\weakconvexfunc} \ltwo{x\opt}$. Thus $F$ has no
  stationary points in the set $\{x : \ltwo{x - x\opt} <
  \frac{2 \lambda}{\weakconvexfunc} \ltwo{x\opt}\}$. This set is
  unavailable; however, it is possible to use a
  spectral algorithm to construct an initializer $x_0$ satisfying $\ltwo{x_0
    - x\opt} < \frac{\lambda}{\weakconvexfunc} \ltwo{x\opt}$ and estimate
  $\ltwo{x\opt}$ to high accuracy as soon as the number of measurements $m
  \gtrsim n$~\cite{DuchiRu18a}.  By defining 
  $\mc{X} \defeq \{x : \ltwo{x - x_0} \le \frac{\lambda}{\weakconvexfunc}
  \ltwo{x\opt}\}$, we then guarantee (via
  Proposition~\ref{proposition:convergence-from-boundedness-weakly-convex})
  that the iterates $x_k$ converge to $x\opt$, and
  Proposition~\ref{proposition:sharp-growth-convex-convergence} implies
  that for a numerical constant $c > 0$,
  there is a (random) $B < \infty$ such that
  $\ltwo{x_k - x\opt}^2 \le B \cdot (1 - c/n)^k$ eventually.
  To achieve an $\epsilon$-accurate solution to the (robust) phase retrieval
  objective, the truncated stochastic model~\eqref{eqn:trunc-model} requires
  $O(n)$ operations per iteration and at most $O(n \log \frac{1}{\epsilon})$
  steps. Combined with the spectral initialization, which requires
  time $O(mn \log \frac{1}{\epsilon})$, the
  overall computation time is at most $O(m n \log
  \frac{1}{\epsilon})$, which is the best known for phase retrieval.
\end{example*}


\newcommand{\numTests}{T}
\newcommand{\numIterations}{K}

\newcommand{\dimExp}{n} 
\newcommand{\numSamplesExp}{m} 
\newcommand{\MatExp}{A} 
\newcommand{\matExp}{a} 
\newcommand{\vecExp}{b} 
\newcommand{\noiseMagExp}{\sigma} 
\newcommand{\noiseVecExp}{v} 

\newcommand{\matCompMat}{M} 
\newcommand{\matCompVara}{X} 
\newcommand{\matCompVarb}{Y}
\newcommand{\numRows}{m} 
\newcommand{\numCols}{n} 
\newcommand{\dimVar}{r} 

\newcommand{\varLogReg}{\theta} 
\newcommand{\vecLogReg}{x} 
\newcommand{\labelLogReg}{y} 

\newcommand{\vecHinge}{a} 
\newcommand{\labelHinge}{\ell} 
\newcommand{\numClassesHinge}{K} 
\newcommand{\flipProp}{p}

\section{Experiments}
\label{sec:exp}

An important question in the development of any optimization method is its
sensitivity to algorithm parameters. Consequently, we conclude by
experimentally examining convergence time and robustness of each of our
optimization methods.  We consider each of the models we investigate in the
paper: the stochastic gradient method (i.e.\ the linear
model~\eqref{eqn:dumb-linear-model}), the proximal
model~\eqref{eqn:prox-model}, the prox-linear
model~\eqref{eqn:prox-linear-model},
and the (lower) truncated model~\eqref{eqn:trunc-model}.


We test both convergence time and, dovetailing
with our focus in this paper,
robustness to stepsize specification for several problems:
phase retrieval
(Section~\ref{section:PR}), matrix completion
(Section~\ref{section:MatComp}), and two classification problems using
deep learning (Section~\ref{section:NN}). We consider stepsize sequences
of the form $\stepsize_k = \stepsize_0 k^{-\steppow}$, where $\steppow \in
(1/2, 1)$, and perform $\numIterations$ iterations over a
wide range of different initial stepsizes $\stepsize_0$. (For brevity,
we present
results only for the stepsize $\steppow = .6$; our other experiments
with varied $\steppow$ were similar.) For a fixed
accuracy $\epsilon > 0$, we record the number of steps $k$ required to
achieve $F(x_k) - F(x\opt) \le \epsilon$, reporting these times (where we
terminate each run at iteration $\numIterations$).  We perform $\numTests$
experiments for each initial stepsize choice, reporting the median
time to $\epsilon$-accuracy and 90\% confidence
intervals.


\subsection{Phase Retrieval}
\label{section:PR}

\begin{figure}[ht]
  \begin{center}
    \begin{tabular}{cc}
      \hspace{-.25cm}
      \begin{overpic}[width=.5\columnwidth]{
	  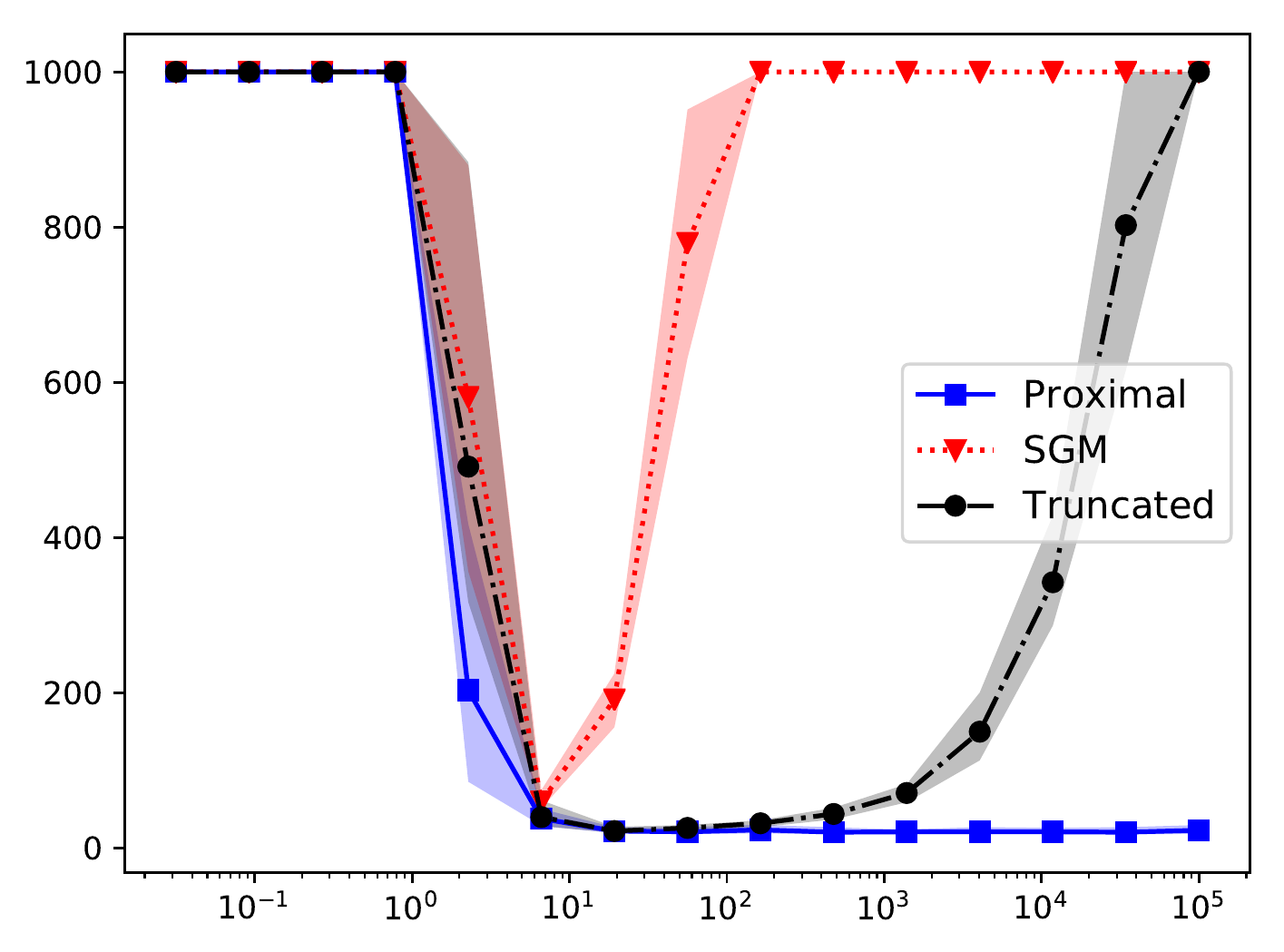}
	\put(35,-1){{\small Initial stepsize $\stepsize_0$}}
	\put(-3,15){
	  \rotatebox{90}{{\small Time to accuracy $\epsilon = .05$}}}
      \end{overpic} &
      \hspace{-.5cm}
      \begin{overpic}[width=.5\columnwidth]{
	  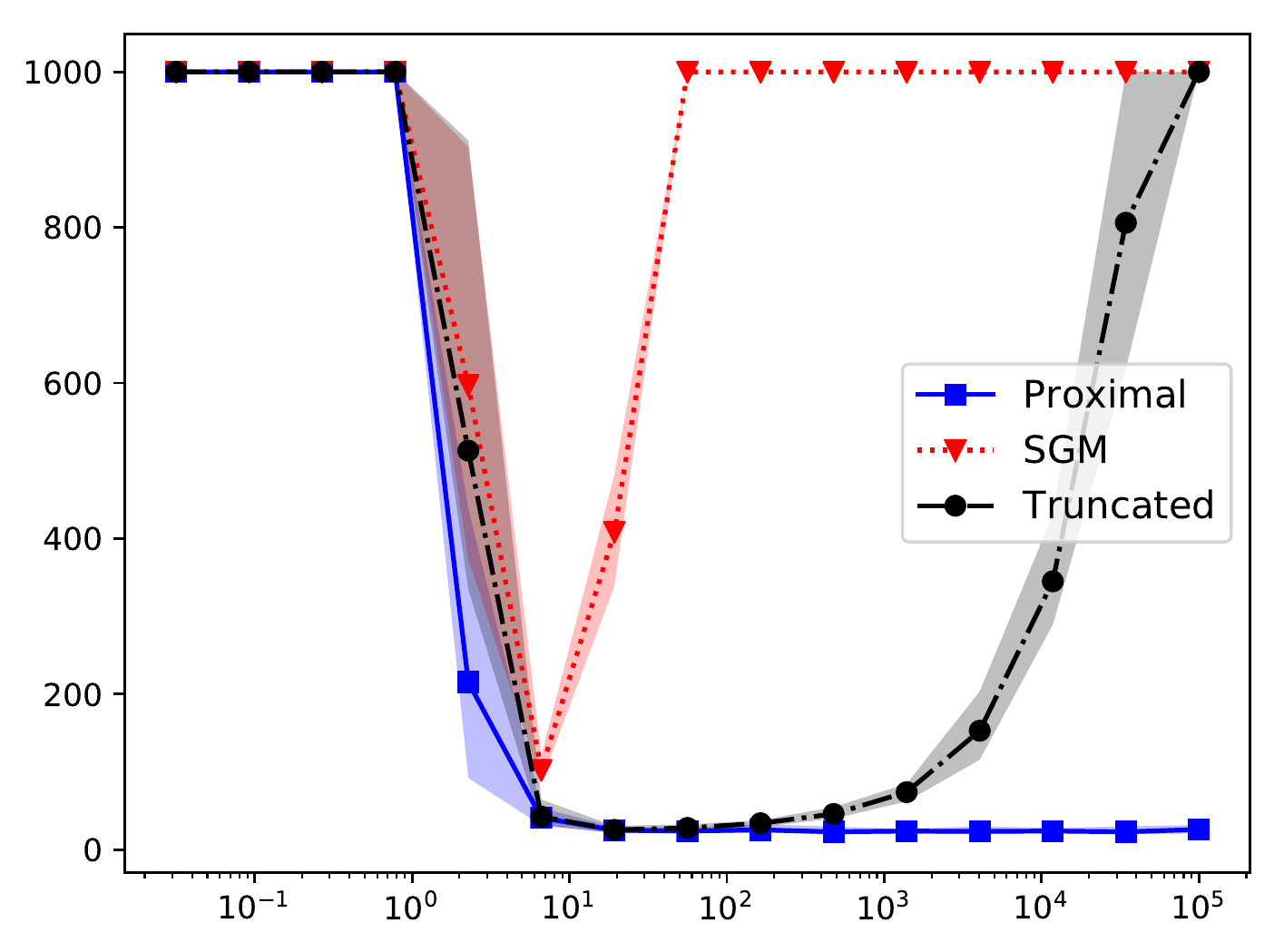}
	\put(35,-1){{\small Initial stepsize $\stepsize_0$}}
	\put(-3,15){
	  \rotatebox{90}{{\small Time to accuracy $\epsilon = .05$}}}
      \end{overpic} \\
      (a) & (b)
    \end{tabular}
    \caption{\label{fig:PR} The number of iterations to achieve
      $\epsilon$-accuracy as a function of the initial step size
      $\stepsize_0$ for phase retrieval with $\dimExp = 50$,
      $\numSamplesExp = 1000$.
    }
  \end{center}
\end{figure}

We start our experiments with the phase retrieval problem in
Examples~\ref{example:PR} and~\ref{example:finite-sample-pr}, focusing on
the real case for simplicity, where we are given $A \in \R^{m \times n}$
with rows $\matExp_i \in \R^\dimExp$ and $b = (Ax\opt)^2 \in
\R_+^\numSamplesExp$ for some $x\opt \in \R^\dimExp$. Our objective is
the non-convex and non-smooth function
\begin{equation*}
  F(x) = \frac{1}{\numSamplesExp}
  \sum_{i=1}^{\numSamplesExp} \left| \< \matExp_i, x \>^2 - \vecExp_i \right|.
\end{equation*}
In our experiments, we sample the entries of the vectors $a_i$ and $x\opt$ 
i.i.d.\ $\normal(0,I_\dimExp)$. 

We present the results of this experiment in Figure~\ref{fig:PR}, comparing
the stochastic gradient method~\eqref{eqn:dumb-linear-model}, proximal
method~\eqref{eqn:prox-model}, and truncated method~\eqref{eqn:trunc-model}
(which yields updates identical to the prox-linear
model~\eqref{eqn:prox-linear-model} in this case). The plots demonstrate the
expected result that the stochastic gradient method has good performance in
a narrow range of stepsizes, $\stepsize_1 \approx 10$ in this case, while
better approximations for \aProx yield better convergence over a large range
of stepsizes.  The truncated model~\eqref{eqn:trunc-model} exhibits some
oscillation for large stepsizes, in contrast to the exact
model~\eqref{eqn:prox-model}, which is robust to all stepsizes $\stepsize_0
\ge 10$.

\subsection{Matrix Completion}
\label{section:MatComp}
\begin{figure}[ht]
  \begin{center}
    \begin{tabular}{cc}
      \begin{overpic}[width=.5\columnwidth]{
	  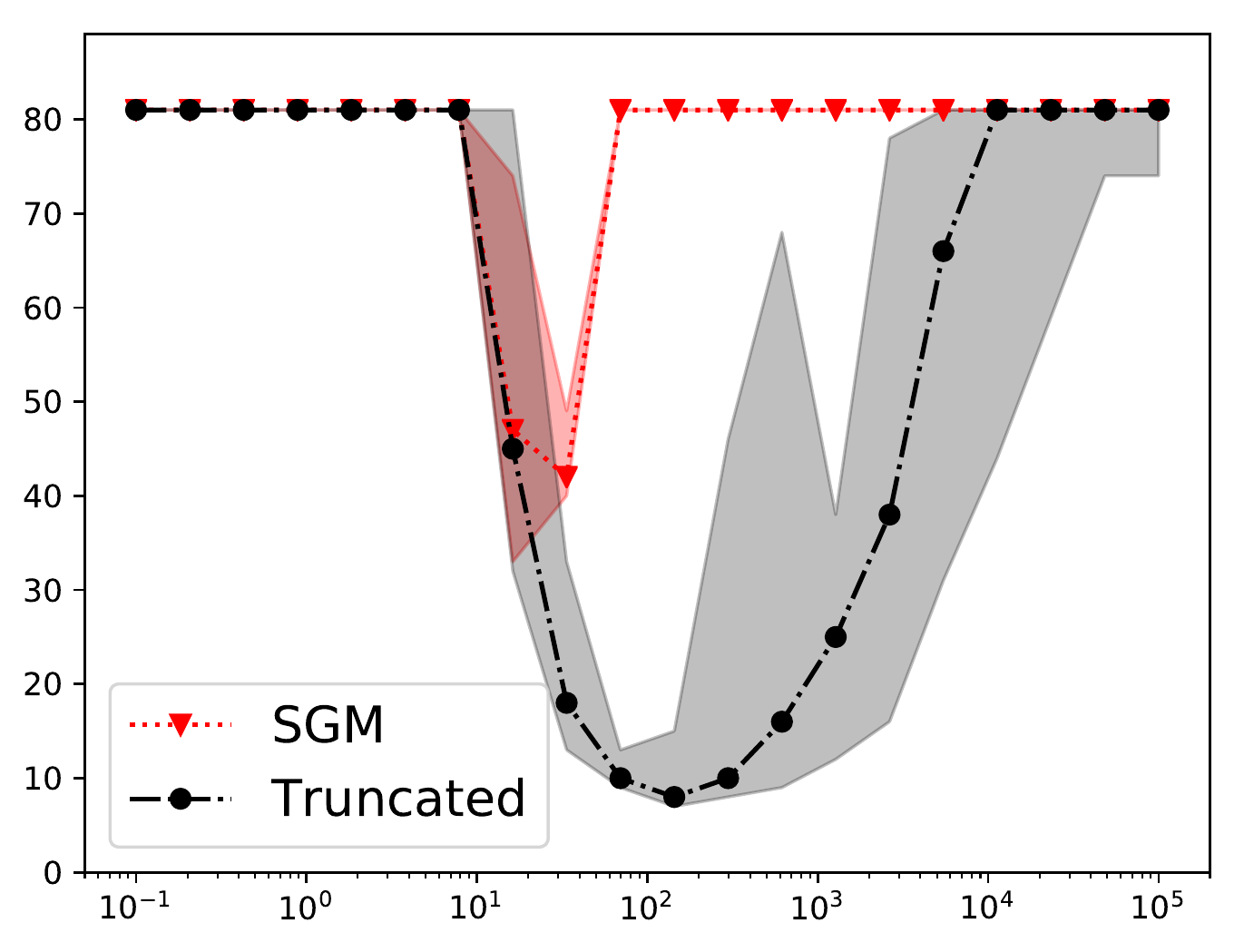}
	\put(35,-1){{\small Initial stepsize $\stepsize_0$}}
	\put(-5,15){
	  \rotatebox{90}{{\small Time to accuracy $\epsilon = .05$}}}
      \end{overpic} &
      \begin{overpic}[width=.5\columnwidth]{
	  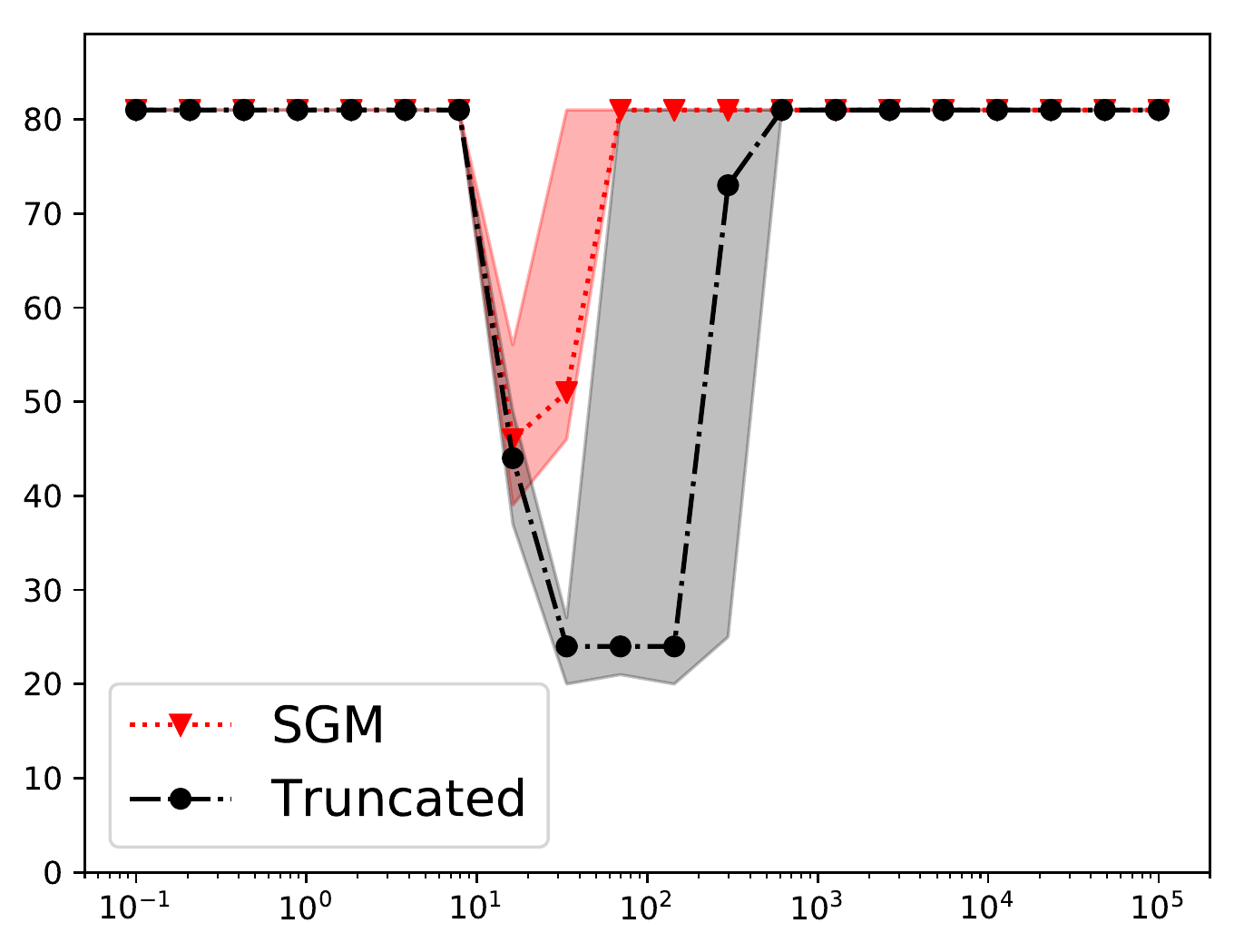}
	\put(35,-1){{\small Initial stepsize $\stepsize_0$}}
	\put(-5,15){
	  \rotatebox{90}{{\small Time to accuracy $\epsilon = .05$}}}
      \end{overpic} \\
      (a) & (b)
    \end{tabular}
    \caption{\label{fig:matComp} Number of iterations to achieve
      $\epsilon$-accuracy as a function of initial step size
      $\stepsize_0$ for matrix completion with $\numRows = 2000$, $\numCols =
      2400$, $\dimVar=5$. Estimated ranks (a) $\hat{\dimVar} = 5$ and (b)
      $\hat{\dimVar} = 10$.}
  \end{center}
\end{figure}

For our second experiment, we investigate the performance of \aProx
procedures for the matrix completion problem of
Example~\ref{example:matComp}.  In this setting, we are given a matrix
$\matCompMat \matCompVara\subopt \matCompVarb\subopt^T $, for
$\matCompVara\subopt \in \R^{\numRows \times \dimVar }$ and
$\matCompVarb\subopt\in \R^{\numCols \times \dimVar }$.  We are also given a
set of indices $\Omega \subset [m] \times [n]$ where the value of matrix
$\matCompMat$ is known. We aim to recover $\matCompMat$ while our access to
$\matCompMat$ is restricted only to indices in the set $\Omega$, so our
goal is to minimize
\begin{equation*}
  F( \matCompVara, \matCompVarb) =
  \frac{1}{\left|\Omega\right|}  \sum_{i,j \in{\Omega}}
  \left|  \matCompVara_i^T  \matCompVarb_j - \matCompMat_{i,j} \right|.
\end{equation*}
Here the matrices $\matCompVara \in \R^{\numRows \times \hat \dimVar}$ and
$\matCompVarb \in \R^{\numCols \times \hat \dimVar}$, where the estimated
rank $\hat{\dimVar} \ge \dimVar$.  We generate the data by drawing the
entries of $\matCompVara\subopt$ and $\matCompVarb\subopt$
i.i.d.\ $\normal(0, 1)$ and choosing $\Omega$ uniformly at random of size
$|\Omega| = 5(nr + mr)$.
We present the timing results in Figure~\ref{fig:matComp}, which tells a
similar story to Figure~\ref{fig:PR}: better approximations, such as the
truncated models (which again yield identical updates to the prox-linear
models~\eqref{eqn:prox-linear-model}), are significantly more robust to
stepsize specification. In this case, the full proximal update requires
solving a nontrivial quartic, so we omit it.

\subsection{Neural Networks}
\label{section:NN}

\begin{figure}[ht]
  \begin{center}
    \begin{tabular}{cc}
      \begin{overpic}[width=.5\columnwidth]{
	  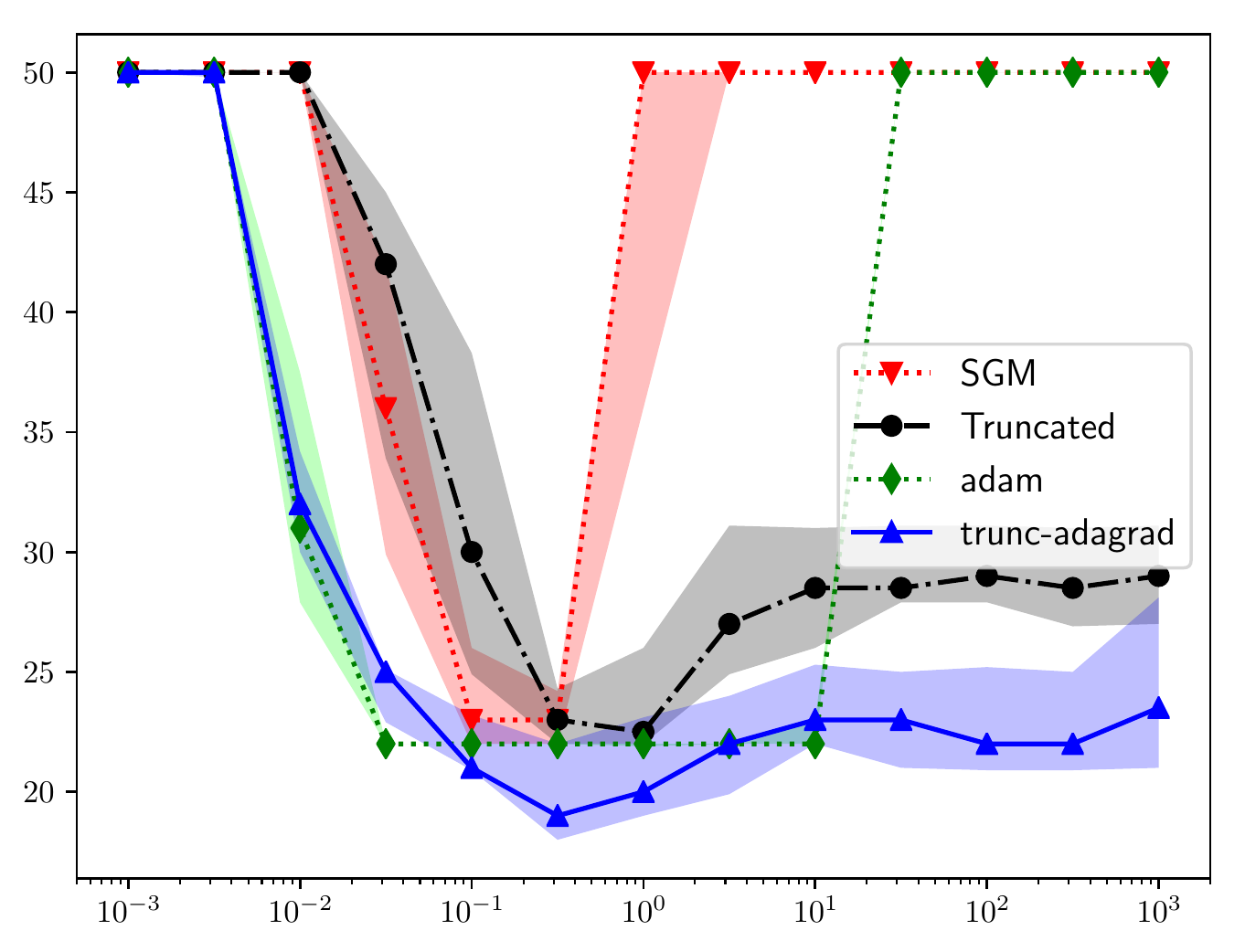}
	\put(35,-1){{\small Initial stepsize $\stepsize_0$}}
	\put(-5,15){
	  \rotatebox{90}{{\small Time to error $\epsilon = .11$}}}
      \end{overpic} &
      \begin{overpic}[width=.5\columnwidth]{
	  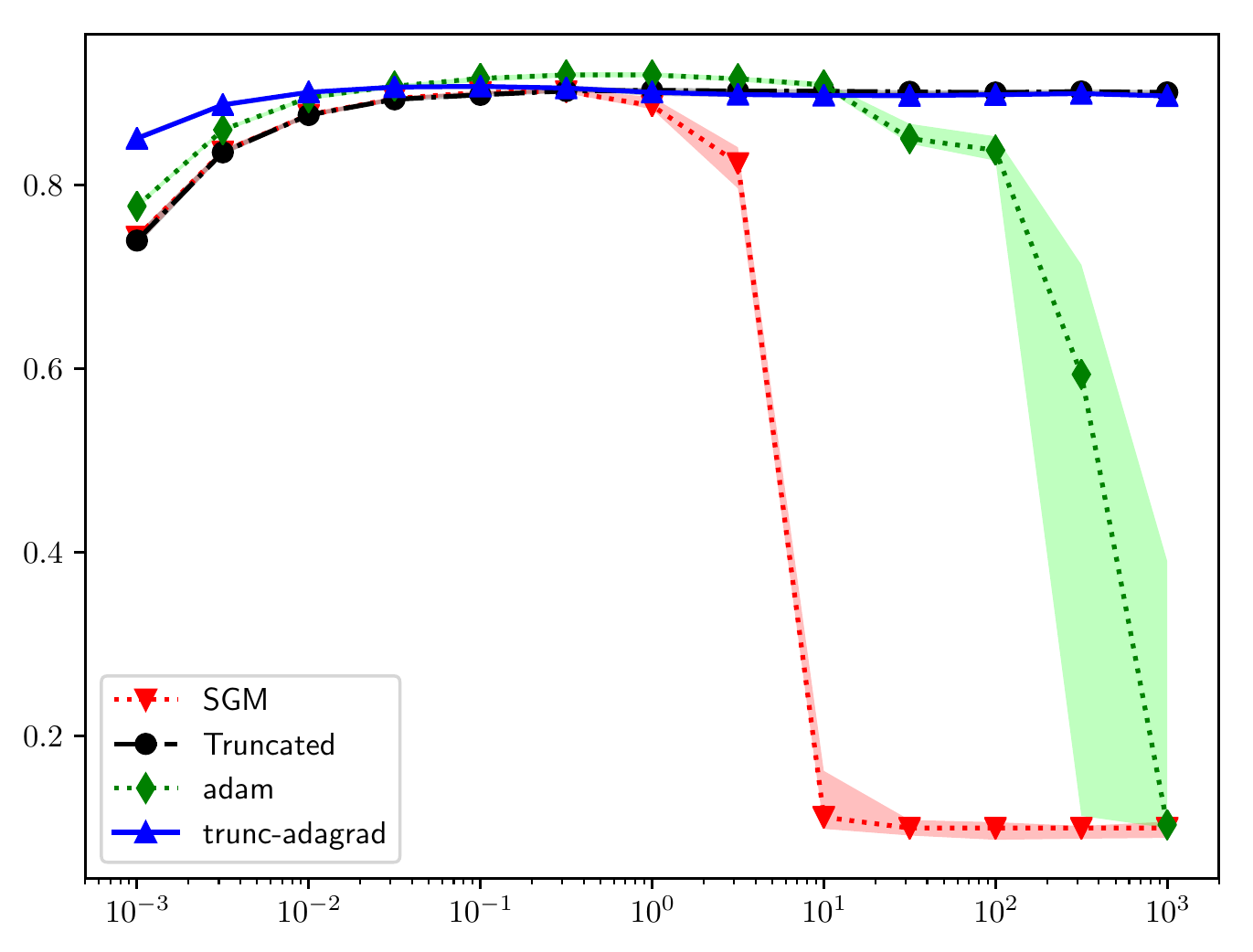}
	\put(35,-1){{\small Initial stepsize $\stepsize_0$}}
	\put(-5,30){
	  \rotatebox{90}{{\small Accuracy}}}
      \end{overpic} \\
      (a) & (b)
    \end{tabular}
    \caption{\label{fig:cifar10} (a) The number of iterations to achieve
      $\epsilon$ test error as a function of the initial step size
      $\stepsize_0$ for CIFAR10.  (b) The best achieved accuracy after
      $T=50$ epochs.  }
  \end{center}
\end{figure}

\begin{figure}[ht]
  \begin{center}
    \begin{tabular}{cc}
      \begin{overpic}[width=.5\columnwidth]{
	  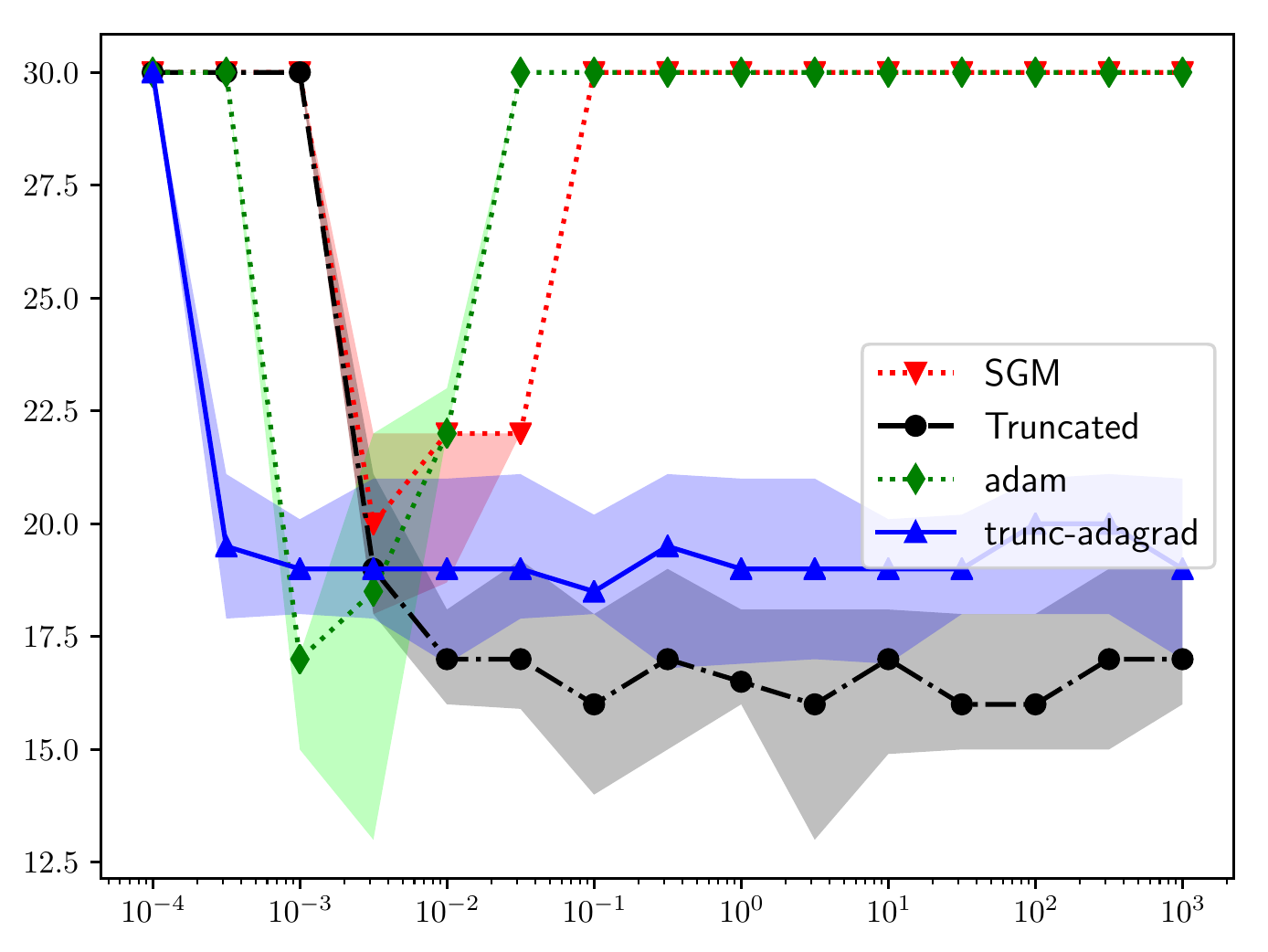}
	\put(35,-1){{\small Initial stepsize $\stepsize_0$}}
	\put(-4,15){
	  \rotatebox{90}{{\small Time to error $\epsilon = .35$}}}
      \end{overpic} &
      \begin{overpic}[width=.5\columnwidth]{
	  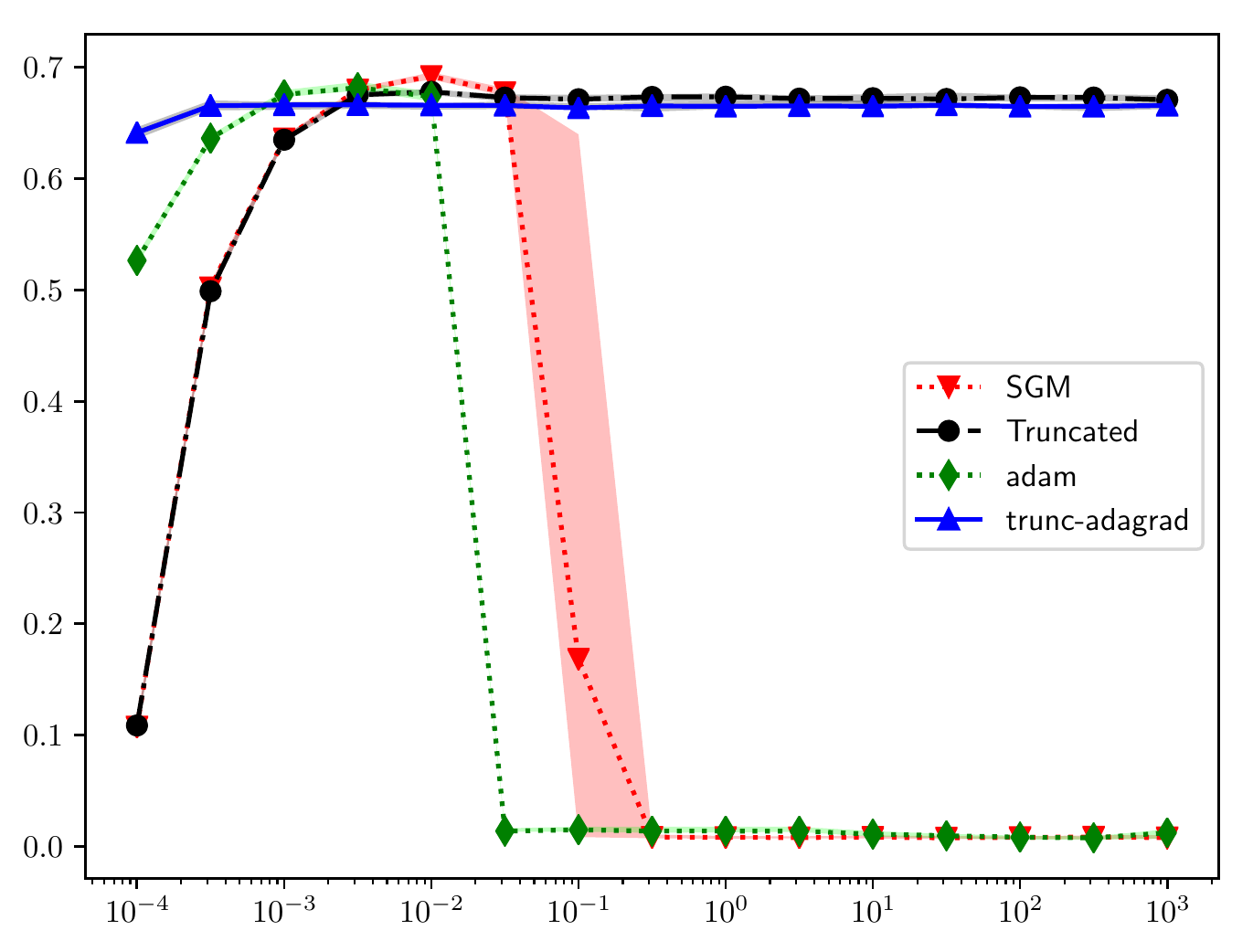}
	\put(35,-1){{\small Initial stepsize $\stepsize_0$}}
	\put(-3,30){
	  \rotatebox{90}{{\small Accuracy}}}
      \end{overpic} \\
      (a) & (b)
    \end{tabular}
    \caption{\label{fig:stanford-dogs} (a) The number of iterations to
      achieve $\epsilon$ test error as a function of the initial step size
      $\stepsize_0$ for Stanford dogs dataset.  (b) The best achieved
      accuracy after $T=30$ epochs.  }
  \end{center}
\end{figure}

As one of our main motivations is to address the extraordinary effort---in
computational and engineering hours---carefully tuning optimization methods,
we would be remiss to avoid experiments on deep neural networks.  Therefore,
in our last set of experiments, we test the performance of our models for
training neural networks for classification tasks over the CIFAR10
dataset~\cite{KrizhevskyHi09} and the fine-grained 128-class Stanford dog
multiclass recognition task~\cite{KhoslaJaYaLi11}.  For our CIFAR10
experiment, we use the Resnet18 architecture~\cite{HeZhReSu16}; we
replace the Rectified Linear Unit (RELU) activations internal to the
architecture with Exponentiated Linear Units (ELUs~\cite{ClevertUnHo16}) so
that the loss is of composite form $f = h \circ c$ for $h$ convex and $c$
smooth.  For Stanford dogs we use the VGG16 architecture~\cite{SimonyanZi14}
pretrained on Imagenet~\cite{DengDoSoLiLiFe09}, again substituting ELUs for
RELU activations.  For this experiment, we also test a modified version of
the truncated method, \textsc{TruncAdaGrad}, that uses the truncated
model in iteration~\eqref{eqn:model-iteration} but uses a diagonally
scaled Euclidean
distance~\cite{DuchiHaSi11}, updating
at iteration $k$ by
\begin{equation*}
  x_{k + 1} = \argmin_x \left\{\hinge{f(x_k; \statrv_k) + \<g_k, x - x_k\>}
  + \frac{1}{2 \stepsize_0} (x - x_k)^T H_k (x - x_k)\right\},
\end{equation*}
where $H_k = \diag(\sum_{i = 1}^k g_i g_i^T)^{1/2}$ for $g_i = f'(x_i;
\statrv_i)$. This update requires no more out of standard deep learning
software than computing a gradient (backpropagation) and loss. We also
compare our optimization methods to \textsc{Adam}, the default optimizer in
TensorFlow~\cite{TensorFlow15}.

Figures~\ref{fig:cifar10} and~\ref{fig:stanford-dogs} show our results for
the CIFAR10 and Stanford dogs datasets, respectively. Plot~(a) of each
figure gives the number of iterations required to achieve $\epsilon$
test-classification error (on the highest or ``top-1'' predicted class),
while plot~(b) shows the maximal accuracy each procedure achieves for a
given initial stepsize $\stepsize_0$. The plots demonstrate the sensitivity
of the standard stochastic gradient method to stepsize choice, which
converges only for a small range of stepsizes, in both experiments. Adam
exhibits better robustness for CIFAR10, while it is extremely sensitive in
the second experiment (Fig.~\ref{fig:stanford-dogs}), converging only for a
small range of stepsizes---this difference in sensitivities highlights the
importance of robustness.  In contrast, our
procedures using the truncated model are apparently robust for all large
enough stepsizes. Plot~(b) of the figures shows additionally that the
maximal accuracy the two truncated methods achieve changes only slightly for
$\stepsize_0 \ge 10^{-1}$, again in strong contrast to the other methods,
which achieve their best accuracy only for a small range of stepsizes.

These results reaffirm the insights from our theoretical results and
experiments: it is important and possible to develop methods that enjoy good
convergence guarantees and are robust to algorithm parameters.

\appendix


\section{Technical lemmas}

Before beginning our technical appendices, we collect a few results that
will be useful for our derivations to come. The first three provide
a guarantee on the progress and convergence that single iterates of
the updates~\eqref{eqn:model-iteration} make.

\begin{lemma}[Asi and Duchi~\cite{AsiDu18}, Lemma 3.2]
  \label{lemma:conv-bound}
  Let $g$ be convex and subdifferentiable on a closed convex set $\mc{X}$
  and let $\beta > 0$. Then for all $x_0, x_1, y \in \mc{X}$,
  and $g'(y) \in \partial g(y)$,
  \begin{equation*}
    g(y) - g(x_1)
    \le \<g'(y), y - x_0\>
    + \frac{1}{2 \beta} \norm{x_1 - x_0}^2
    + \frac{\beta}{2} \norm{g'(y)}^2
  \end{equation*}
\end{lemma}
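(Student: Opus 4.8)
The plan is to combine the defining subgradient inequality for $g$ at $y$ with a single application of Young's (Fenchel's) inequality. First I would write down, using convexity and subdifferentiability of $g$ on $\mc{X}$ together with the fact that $g'(y) \in \partial g(y)$, the inequality
\[
g(x_1) \ge g(y) + \<g'(y), x_1 - y\>,
\]
valid since $x_1 \in \mc{X}$. Rearranging immediately gives $g(y) - g(x_1) \le \<g'(y), y - x_1\>$.

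Next I would split the inner product by inserting $x_0$: $\<g'(y), y - x_1\> = \<g'(y), y - x_0\> + \<g'(y), x_0 - x_1\>$. The first term is exactly the leading term on the right-hand side of the claimed bound, so it remains only to control the cross term $\<g'(y), x_0 - x_1\>$. Here I would apply Young's inequality in the form $\<u, v\> \le \frac{\beta}{2}\ltwo{u}^2 + \frac{1}{2\beta}\ltwo{v}^2$ with $u = g'(y)$ and $v = x_0 - x_1$, yielding $\<g'(y), x_0 - x_1\> \le \frac{\beta}{2}\ltwo{g'(y)}^2 + \frac{1}{2\beta}\ltwo{x_1 - x_0}^2$. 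Summing the two pieces gives precisely the stated inequality.

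There is no real obstacle here: the only ingredients are the subgradient inequality and a quadratic/linear upper bound, and the decomposition through $x_0$ is the one line that makes the three terms appear in the required shape. The one point to be careful about is simply that $x_1 \in \mc{X}$ (so the subgradient inequality applies) and that no assumption on $x_0$ beyond membership in $\mc{X}$ — or indeed in $\R^n$ — is needed, since $x_0$ enters only through the linear and Young-inequality terms.
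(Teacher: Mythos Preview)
Your argument is correct: the subgradient inequality at $y$ followed by the decomposition through $x_0$ and Young's inequality is exactly the standard two-line proof of this estimate. The paper does not actually include its own proof of this lemma---it is quoted as Lemma~3.2 of~\cite{AsiDu18}---so there is nothing to compare against beyond noting that your derivation is the natural one.
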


\begin{lemma}[Asi and Duchi~\cite{AsiDu18}, Lemma 3.3]
  \label{lemma:single-step-progress}
  Let Condition~\ref{cond:convex-model} hold.
  In each step of the method~\eqref{eqn:model-iteration},
  for any $x \in \mc{X}$,
  \begin{equation*}
    \half \ltwo{x_{k + 1} - x}^2
    \le \half \ltwo{x_k - x}^2
    - \stepsize_k\left[f_{x_k}(x_{k + 1}; \statrv_k)
      - f_{x_k}(x; \statrv_k)\right]
    - \half \ltwo{x_k - x_{k+1}}^2.
  \end{equation*}
\end{lemma}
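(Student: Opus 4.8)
The plan is to derive Lemma~\ref{lemma:single-step-progress} directly from the strong convexity of the regularized model objective; notably, only model convexity~\ref{cond:convex-model} is needed, and neither~\ref{cond:weak-convex-lower} nor~\ref{cond:subgrad-model} enter. Write $\phi_k(x) \defeq f_{x_k}(x; \statrv_k) + \frac{1}{2\stepsize_k}\ltwo{x - x_k}^2$ for the function minimized in the update~\eqref{eqn:model-iteration}, so $x_{k+1} = \argmin_{x \in \mc{X}} \phi_k(x)$. Since $y \mapsto f_{x_k}(y;\statrv_k)$ is convex and subdifferentiable by~\ref{cond:convex-model}, and $y \mapsto \frac{1}{2\stepsize_k}\ltwo{y - x_k}^2$ is $\frac{1}{\stepsize_k}$-strongly convex, the sum $\phi_k$ is $\frac{1}{\stepsize_k}$-strongly convex on $\mc{X}$.

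The key step is the standard three-point (``prox'') inequality for constrained minimizers of strongly convex functions: if $\phi$ is $\mu$-strongly convex and subdifferentiable on a closed convex set $\mc{X}$ with $x^+ = \argmin_{x \in \mc{X}} \phi(x)$, then $\phi(x) \ge \phi(x^+) + \frac{\mu}{2}\ltwo{x - x^+}^2$ for all $x \in \mc{X}$. I would obtain this by combining the strong-convexity lower bound $\phi(x) \ge \phi(x^+) + \<g, x - x^+\> + \frac{\mu}{2}\ltwo{x - x^+}^2$ (valid for an appropriate $g \in \partial \phi(x^+)$) with the first-order optimality condition for the constrained minimum, which guarantees a subgradient $g \in \partial \phi(x^+)$ with $\<g, x - x^+\> \ge 0$ for every $x \in \mc{X}$.

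Applying this with $\phi = \phi_k$, $\mu = 1/\stepsize_k$, and $x^+ = x_{k+1}$ gives, for any $x \in \mc{X}$,
\[
f_{x_k}(x;\statrv_k) + \frac{1}{2\stepsize_k}\ltwo{x - x_k}^2 \ge f_{x_k}(x_{k+1};\statrv_k) + \frac{1}{2\stepsize_k}\ltwo{x_{k+1} - x_k}^2 + \frac{1}{2\stepsize_k}\ltwo{x - x_{k+1}}^2,
\]
and rearranging, then multiplying through by $\stepsize_k$, yields precisely the stated bound. There is no real obstacle here; the only point needing a line of care is the justification that the ``correct'' subgradient of $\phi_k$ at $x_{k+1}$ lies in the normal cone of $\mc{X}$ (i.e.\ the constrained optimality condition), which is routine convex analysis and reduces to $0 \in \partial \phi_k(x_{k+1})$ when $\mc{X} = \R^n$ or at interior points.
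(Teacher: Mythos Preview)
Your proof is correct. The paper does not actually prove this lemma---it is quoted verbatim from~\cite{AsiDu18}, Lemma~3.3---but your argument via the $\frac{1}{\stepsize_k}$-strong convexity of $\phi_k$ together with the first-order optimality condition at $x_{k+1}$ is exactly the standard derivation, and is what one finds in the cited reference.
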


\begin{lemma}
  \label{lemma:single-recentered-progress}
  Let Conditions~\ref{cond:convex-model}--\ref{cond:subgrad-model} hold and
  let $x_k$ be generated by the updates~\eqref{eqn:model-iteration}. Then
  for any $x \in \mc{X}$,
  \begin{equation*}
    \half \ltwo{x_{k+1} - x}^2
    \le \half \ltwo{x_k - x}^2
    - \stepsize_k \left[f(x_k; \statrv_k) - f(x; \statrv_k)
      - \frac{\weakconvexfunc(\statrv_k)}{2} \ltwo{x_k - x}^2\right]
    + \frac{\stepsize_k^2}{2} \ltwo{f'(x_k; \statrv_k)}^2.
  \end{equation*}
\end{lemma}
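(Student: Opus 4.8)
The plan is to start from Lemma~\ref{lemma:single-step-progress}, which is already a progress bound phrased in terms of the model $f_{x_k}$, and then convert the model quantities into quantities involving the true function $f$ using Conditions~\ref{cond:convex-model}--\ref{cond:subgrad-model}. First I would apply Lemma~\ref{lemma:single-step-progress} with the point $x$ to get
\begin{equation*}
  \half \ltwo{x_{k+1} - x}^2
  \le \half \ltwo{x_k - x}^2
  - \stepsize_k\left[f_{x_k}(x_{k+1}; \statrv_k) - f_{x_k}(x; \statrv_k)\right]
  - \half \ltwo{x_k - x_{k+1}}^2.
\end{equation*}
Now I need a lower bound on the bracketed difference of model values. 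For the term $-f_{x_k}(x;\statrv_k)$, Condition~\ref{cond:weak-convex-lower} (weak lower bound) gives $f_{x_k}(x;\statrv_k) \le f(x;\statrv_k) + \frac{\weakconvexfunc(\statrv_k)}{2}\ltwo{x_k - x}^2$, so $-f_{x_k}(x;\statrv_k) \ge -f(x;\statrv_k) - \frac{\weakconvexfunc(\statrv_k)}{2}\ltwo{x_k - x}^2$. For the term $f_{x_k}(x_{k+1};\statrv_k)$ I would use convexity of the model (Condition~\ref{cond:convex-model}) together with the local accuracy (Condition~\ref{cond:subgrad-model}), which says $f_{x_k}(x_k;\statrv_k) = f(x_k;\statrv_k)$ and that a subgradient of $f_{x_k}$ at $x_k$ is also a subgradient $f'(x_k;\statrv_k)$ of $f$ at $x_k$: the convexity inequality at the base point gives $f_{x_k}(x_{k+1};\statrv_k) \ge f(x_k;\statrv_k) + \<f'(x_k;\statrv_k), x_{k+1} - x_k\>$.

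Substituting these two bounds into the display yields
\begin{equation*}
  \half \ltwo{x_{k+1} - x}^2
  \le \half \ltwo{x_k - x}^2
  - \stepsize_k\left[f(x_k;\statrv_k) - f(x;\statrv_k) - \frac{\weakconvexfunc(\statrv_k)}{2}\ltwo{x_k - x}^2\right]
  - \stepsize_k\<f'(x_k;\statrv_k), x_{k+1} - x_k\> - \half\ltwo{x_k - x_{k+1}}^2.
\end{equation*}
It remains to control the cross term $-\stepsize_k\<f'(x_k;\statrv_k), x_{k+1} - x_k\> - \half\ltwo{x_k - x_{k+1}}^2$. By Cauchy--Schwarz (or the Fenchel--Young inequality $\<a,b\> \le \half\ltwo{a}^2 + \half\ltwo{b}^2$ applied to $a = \stepsize_k f'(x_k;\statrv_k)$ and $b = x_k - x_{k+1}$), we get $-\stepsize_k\<f'(x_k;\statrv_k), x_{k+1} - x_k\> \le \frac{\stepsize_k^2}{2}\ltwo{f'(x_k;\statrv_k)}^2 + \half\ltwo{x_k - x_{k+1}}^2$, and the $\half\ltwo{x_k - x_{k+1}}^2$ exactly cancels the remaining negative square term. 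This leaves precisely the claimed inequality.

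I do not anticipate a genuine obstacle here: the only mild subtlety is making sure the subgradient $f'(x_k;\statrv_k)$ appearing in the convexity bound for $f_{x_k}$ is the \emph{same} element that appears in the final statement's term $\ltwo{f'(x_k;\statrv_k)}^2$ — this is exactly what Condition~\ref{cond:subgrad-model} guarantees, since $\partial_y f_{x_k}(y;\statrv_k)\big|_{y=x_k} \subset \partial f(x_k;\statrv_k)$, so we may take any subgradient of the model at $x_k$ and it is a legitimate choice of $f'(x_k;\statrv_k)$. The statement is implicitly understood to hold for this selection (or, equivalently, for some selection $f'(x_k;\statrv_k) \in \partial f(x_k;\statrv_k)$), and the rest is the routine completion-of-the-square step above.
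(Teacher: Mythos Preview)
Your proposal is correct and follows essentially the same route as the paper: start from Lemma~\ref{lemma:single-step-progress}, use convexity of the model at the base point together with Condition~\ref{cond:subgrad-model} to lower bound $f_{x_k}(x_{k+1};\statrv_k)$, use Condition~\ref{cond:weak-convex-lower} to upper bound $f_{x_k}(x;\statrv_k)$, and absorb the cross term via Young's inequality. The paper simply packages the model-to-true-function lower bound as a single chain before substituting into Lemma~\ref{lemma:single-step-progress}, but the ingredients and the order in which they are used are the same as yours.
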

\begin{proof}
  Let $\stepsize > 0$ and $x_0, x_1, x \in \mc{X}$ and
  $\statval \in \statdomain$ be otherwise arbitrary. Then
  for any $g \in \partial f(x_0; \statval)$,
  we have
  \begin{align*}
    f_{x_0}(x_1; \statval) - f_{x_0}(x; \statval)
    & \stackrel{(i)}{\ge} f_{x_0}(x_0; \statval) - f_{x_0}(x; \statval)
    + \<g, x_1 - x_0\> \\
    & \stackrel{(ii)}{\ge} f(x_0; \statval) - f(x; \statval)
    - \frac{\weakconvexfunc(\statval)}{2} \ltwo{x - x_0}^2
    - \frac{\stepsize}{2} \ltwo{g}^2
    - \frac{1}{2 \stepsize} \ltwo{x_0 - x_1}^2,
  \end{align*}
  where inequality~$(i)$ used the convexity of $f_{x_0}$ and
  inequality~$(ii)$ used Condtition~\ref{cond:subgrad-model} to obtain
  $f_{x_0}(x_0; \statval) = f(x_0; \statval)$ and
  Condition~\ref{cond:weak-convex-lower} on
  $f_{x_0}(x; \statval)$.
  Making the obvious substitutions for $x_0 \mapsto x_k$ and others in
  Lemma~\ref{lemma:single-step-progress} then gives the result.
\end{proof}

We also need the Robbins-Siegmund almost supermartingale convergence theorem.
\begin{lemma}[\cite{RobbinsSi71}]
  \label{lemma:robbins-siegmund}
  Let $A_k, B_k, C_k, D_k \ge 0$ be non-negative random variables adapted to
  the filtration $\mc{F}_k$ and satisfying $\E[A_{k + 1} \mid \mc{F}_k] \le
  (1 + B_k) A_k + C_k - D_k$. Then on the event $\{\sum_k B_k < \infty,
  \sum_k C_k < \infty\}$, there is a random $A_\infty < \infty$
  such that $A_k \cas A_\infty$ and $\sum_k D_k < \infty$.
\end{lemma}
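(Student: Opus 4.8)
The plan is to follow the classical argument of \citet{RobbinsSi71}, whose two essential ingredients are a multiplicative renormalization that turns the almost-supermartingale into a genuine supermartingale, and a stopping-time localization that handles the fact that the conclusion is claimed only on the random event $\Lambda \defeq \{\sum_k B_k < \infty, \sum_k C_k < \infty\}$ rather than on all of $\Omega$.

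First I would remove the factor $(1+B_k)$. Since $B_k$ is $\mc{F}_k$-measurable, the products $\gamma_k \defeq \prod_{i=1}^{k-1}(1+B_i)$ (with $\gamma_1 = 1$) are $\mc{F}_{k-1}$-measurable and nondecreasing, and on $\{\sum_k B_k < \infty\}$ they converge to a finite limit $\gamma_\infty \in [1,\infty)$. Setting $A_k' \defeq A_k/\gamma_k$, $C_k' \defeq C_k/\gamma_{k+1}$, and $D_k' \defeq D_k/\gamma_{k+1}$ and dividing the hypothesis by the $\mc{F}_k$-measurable quantity $\gamma_{k+1} = \gamma_k(1+B_k)$ yields the clean recursion $\E[A_{k+1}' \mid \mc{F}_k] \le A_k' + C_k' - D_k'$. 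Because $1 \le \gamma_k \le \gamma_\infty$ there, the primed and unprimed series $\sum_k C_k$ and $\sum_k D_k$ share the same finiteness on $\{\sum_k B_k<\infty\}$, and $A_k'$ converges iff $A_k$ does; so it suffices to prove the theorem for the primed quantities.

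Next I would build a supermartingale and localize. Define $W_k \defeq A_k' + \sum_{i=1}^{k-1} D_i' - \sum_{i=1}^{k-1} C_i'$; the recursion gives $\E[W_{k+1}\mid\mc{F}_k] \le W_k$, so $W_k$ is a supermartingale, but it need not be nonnegative because of the $-\sum C_i'$ term, and the partial sums may diverge off of $\Lambda$. To repair both problems at once, for each integer $m$ I would introduce the stopping time $\tau_m \defeq \inf\{k : \sum_{i=1}^{k} C_i' \ge m\}$ and consider the stopped process $W_{k\wedge\tau_m} + m$. Using $A_k' \ge 0$ and $\sum_{i=1}^{k-1} D_i' \ge 0$ together with the bound $\sum_{i=1}^{(k\wedge\tau_m)-1} C_i' < m$ valid before $\tau_m$, this stopped process is a nonnegative supermartingale, hence converges almost surely to a finite limit by Doob's theorem.

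Finally I would extract the two conclusions and remove the localization. Since $D_k'$ is $\mc{F}_k$-measurable, rearranging the recursion gives $D_k' \le A_k' - \E[A_{k+1}'\mid\mc{F}_k] + C_k'$; taking expectations and summing up to $\tau_m$ telescopes to $\E[\sum_{k<\tau_m} D_k'] \le \E[A_1'] + m < \infty$, so $\sum_k D_k' < \infty$ almost surely on $\{\tau_m = \infty\}$. On that event $\sum_{i<k} C_i'$ is bounded by $m$ and convergent, and $W_k$ converges, so the identity $A_k' = W_k - \sum_{i<k} D_i' + \sum_{i<k} C_i'$ forces $A_k'$ (hence $A_k = \gamma_k A_k'$) to converge to a finite limit $A_\infty$. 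On $\Lambda$ we have $\sum_k C_k' < \infty$, so $\tau_m = \infty$ for all large $m$; thus $\Lambda \subset \bigcup_m \{\tau_m=\infty\}$ up to a null set, and taking the union over $m\in\N$ yields $A_k \cas A_\infty$ and $\sum_k D_k < \infty$ on all of $\Lambda$. The main obstacle is the localization bookkeeping: because the conclusion is asserted only on the random set $\Lambda$, one cannot invoke nonnegative supermartingale convergence globally, and the stopping times must be chosen so that $W_{k\wedge\tau_m} + m$ is simultaneously nonnegative and exhausts $\Lambda$ as $m\to\infty$; verifying the telescoping bound on $\sum D_k'$ past the jump at $\tau_m$ is the only other point requiring care.
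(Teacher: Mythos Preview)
The paper does not prove this lemma; it simply states it with a citation to \cite{RobbinsSi71} and invokes it as a black box throughout the appendices. There is therefore no ``paper's proof'' to compare against. Your proposal reproduces the classical Robbins--Siegmund argument (multiplicative renormalization to kill the $(1+B_k)$ factor, then stopping-time localization to obtain a genuine nonnegative supermartingale), and the outline is correct.

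One small point worth tightening: your telescoping bound $\E[\sum_{k<\tau_m} D_k'] \le \E[A_1'] + m$ implicitly assumes $A_1'$ is integrable, which the lemma does not require. The cleaner route---and the one your own setup already supports---is to avoid expectations entirely: since $W_{k\wedge\tau_m}+m$ is a nonnegative supermartingale it converges a.s.\ to a finite limit, and on $\{\tau_m=\infty\}$ the identity $\sum_{i<k} D_i' = W_k - A_k' + \sum_{i<k} C_i' \le W_k + m$ together with $A_k'\ge 0$ shows directly that the monotone sequence $\sum_{i<k} D_i'$ is a.s.\ bounded, hence convergent, after which $A_k' = W_k - \sum_{i<k} D_i' + \sum_{i<k} C_i'$ converges as you argue.
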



\section{Proof of Theorem~\ref{theorem:trunc-stability}}
\label{sec:proof-truncated-stability}

We begin with two lemmas useful for the proof. The first lemma provides a
recursive inequality on the iterates for any model satisfying
Conditions~\ref{cond:convex-model}--\ref{cond:lower-by-optimal}.
\begin{lemma}
  \label{lemma:trunc-recurssion}
  Let $x_k$ be generated by the iteration~\eqref{eqn:model-iteration} with a
  model satisfying
  Conditions~\ref{cond:convex-model}--\ref{cond:lower-by-optimal}.  Then for
  any $x\opt \in \mc{X\opt}$
  \begin{equation*}
    \ltwo{x_{k + 1} - x\opt}^2
    \le  \ltwo{x_k - x\opt}^2
    + 2\stepsize_k
    \left( f(x\opt; \statrv_k) - \inf_{z\in \mc{X}} f(z;\statrv_k) \right) .
  \end{equation*}
\end{lemma}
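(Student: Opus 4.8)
The plan is to combine the single-step progress guarantee of Lemma~\ref{lemma:single-recentered-progress} (with the target point $x = x\opt$) with the lower-optimality Condition~\ref{cond:lower-by-optimal}, which lets us cancel the troublesome $\stepsize_k^2 \ltwo{f'(x_k;\statrv_k)}^2$ term against the model gap. First I would apply Lemma~\ref{lemma:single-step-progress} at $x = x\opt$, giving
\[
  \half \ltwo{x_{k+1} - x\opt}^2
  \le \half \ltwo{x_k - x\opt}^2
  - \stepsize_k\big[f_{x_k}(x_{k+1};\statrv_k) - f_{x_k}(x\opt;\statrv_k)\big]
  - \half \ltwo{x_k - x_{k+1}}^2.
\]
The key observation is that Condition~\ref{cond:lower-by-optimal} gives the uniform lower bound $f_{x_k}(x_{k+1};\statrv_k) \ge \inf_{z\in\mc{X}} f(z;\statrv_k)$, while Conditions~\ref{cond:subgrad-model} and~\ref{cond:weak-convex-lower} give $f_{x_k}(x\opt;\statrv_k) \le f(x\opt;\statrv_k) + \frac{\weakconvexfunc(\statrv_k)}{2}\ltwo{x\opt - x_k}^2$. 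However, I should avoid introducing the weak-convexity term here if the clean bound in the statement is to come out; the cleanest route is instead to bound $f_{x_k}(x\opt;\statrv_k) \le f(x\opt;\statrv_k)$ directly when the model is the truncated one, or — more in the spirit of the stated conditions — to recall that for the iteration under study the relevant term telescopes.

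A cleaner path, and the one I expect to carry out, is to work directly from the optimality of $x_{k+1}$ in the proximal subproblem~\eqref{eqn:model-iteration}. By definition $x_{k+1}$ minimizes $f_{x_k}(\cdot;\statrv_k) + \frac{1}{2\stepsize_k}\ltwo{\cdot - x_k}^2$ over $\mc{X}$, so testing against $x\opt$ yields
\[
  f_{x_k}(x_{k+1};\statrv_k) + \frac{1}{2\stepsize_k}\ltwo{x_{k+1} - x_k}^2
  \le f_{x_k}(x\opt;\statrv_k) + \frac{1}{2\stepsize_k}\ltwo{x\opt - x_k}^2,
\]
and strong convexity of the objective (constant $1/\stepsize_k$) additionally gives $\frac{1}{2\stepsize_k}\ltwo{x_{k+1} - x\opt}^2$ on the left. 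Rearranging,
\[
  \ltwo{x_{k+1} - x\opt}^2
  \le \ltwo{x_k - x\opt}^2 + 2\stepsize_k\big[f_{x_k}(x\opt;\statrv_k) - f_{x_k}(x_{k+1};\statrv_k)\big].
\]
Now I bound the bracket: $f_{x_k}(x\opt;\statrv_k) \le f(x\opt;\statrv_k)$ since the truncated model lies below $f$ on the convex part (and here $\weakconvexfunc \le 0$, or one uses that $x\opt$ is the relevant target), and $f_{x_k}(x_{k+1};\statrv_k) \ge \inf_{z\in\mc{X}} f(z;\statrv_k)$ by Condition~\ref{cond:lower-by-optimal}. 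Substituting these two bounds gives exactly
\[
  \ltwo{x_{k+1} - x\opt}^2 \le \ltwo{x_k - x\opt}^2 + 2\stepsize_k\Big(f(x\opt;\statrv_k) - \inf_{z\in\mc{X}} f(z;\statrv_k)\Big),
\]
which is the claim.

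The main obstacle is getting the upper bound $f_{x_k}(x\opt;\statrv_k) \le f(x\opt;\statrv_k)$ without incurring a weak-convexity penalty term $\frac{\weakconvexfunc(\statrv_k)}{2}\ltwo{x\opt - x_k}^2$: Condition~\ref{cond:weak-convex-lower} only gives this up to that quadratic. Since Theorem~\ref{theorem:trunc-stability} assumes $f(\cdot;\statval)$ is convex, we have $\weakconvexfunc(\statval) \le 0$, so Condition~\ref{cond:weak-convex-lower} reads $f_{x_k}(x\opt;\statrv_k) \le f(x\opt;\statrv_k) + \frac{\weakconvexfunc(\statrv_k)}{2}\ltwo{x\opt - x_k}^2 \le f(x\opt;\statrv_k)$, which resolves the issue cleanly. (If one wanted the lemma in the genuinely weakly convex regime, the extra term $\stepsize_k\weakconvexfunc(\statrv_k)\ltwo{x_k - x\opt}^2$ would have to be retained, matching the form of Lemma~\ref{lemma:shared-min-progress}; but in the convex setting relevant to Theorem~\ref{theorem:trunc-stability} it drops.) The remaining steps are the routine rearrangements of the proximal optimality inequality indicated above.
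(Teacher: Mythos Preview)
Your proposal is correct and follows essentially the same approach as the paper: apply Lemma~\ref{lemma:single-step-progress} at $x=x\opt$, then bound $f_{x_k}(x\opt;\statrv_k)\le f(x\opt;\statrv_k)$ via Condition~\ref{cond:weak-convex-lower} with $\weakconvexfunc\le 0$ and $f_{x_k}(x_{k+1};\statrv_k)\ge\inf_{z\in\mc{X}}f(z;\statrv_k)$ via Condition~\ref{cond:lower-by-optimal}. Your ``cleaner path'' via strong convexity of the proximal subproblem is precisely a rederivation of Lemma~\ref{lemma:single-step-progress}, so there is no real divergence; you also correctly identify that the bound $f_{x_k}(x\opt;\statrv_k)\le f(x\opt;\statrv_k)$ tacitly uses convexity, which the paper's proof invokes without comment (the lemma is only applied under the convexity hypothesis of Theorem~\ref{theorem:trunc-stability}).
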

\begin{proof}
  Lemma~\ref{lemma:single-step-progress} implies that
  \begin{align*}
    \half \ltwo{x_{k + 1} - x\opt}^2
    & \le  \half \ltwo{x_k - x\opt}^2
    + \stepsize_k \left[f_{x_k}(x\opt; \statrv_k) - f_{x_k}(x_{k+1}; \statrv_k)
      \right]
    - \half \ltwo{x_k - x_{k+1}}^2 \\
    & \stackrel{(\star)}{\le} \half \ltwo{x_k - x\opt}^2
    + \stepsize_k \left( f(x\opt; \statrv_k) 
    - \inf_{z\in \mc{X}} f(z;\statrv_k) \right),
  \end{align*}
  where inequality~$(\star)$ uses that $f_{x_k}(x\opt; \statrv_k)
  \le f(x\opt; \statrv_k)$ and $f_{x_k}(x_{k+1}; \statrv_k)
  \ge \inf_{z \in \mc{X}} f(z; \statrv_k)$.
\end{proof}
\noindent
The preceding recursion allows us to bound
the expected norm of the gradients of the iterates.
\begin{lemma}
  \label{lemma:bounded-expectation}
  Let Assumption~\ref{assumption:polynomial-growth} hold and let $x_k$ be
  generated by the iteration~\eqref{eqn:model-iteration} with a model
  satisfying
  Conditions~\ref{cond:convex-model}--\ref{cond:lower-by-optimal}
  and stepsizes $\stepsize_k = \stepsize_0 k^{-\steppow}$. Then
  there exist $C_1, C_2 < \infty$ such that
  \begin{equation*}
    \E\left[\ltwo{f'(x_k; \statrv)}^2\right]
    \le C_1 + C_2 k^{\frac{p (1-\steppow)}{2}}.
  \end{equation*}
\end{lemma}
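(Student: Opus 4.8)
The plan is to turn the one-step recursion of Lemma~\ref{lemma:trunc-recurssion} into a polynomial-in-$k$ bound on $\E[\dist(x_k, \mc{X}\opt)^p]$ and then invoke the growth condition of Assumption~\ref{assumption:polynomial-growth}. For each $k$, let $x_k\opt$ be a point of $\mc{X}\opt$ nearest to $x_k$; since $\mc{X}\opt$ is nonempty, closed, and convex and $x_k \in \mc{F}_{k-1}$, this projection is well defined and $\mc{F}_{k-1}$-measurable, while $\statrv_k$ is independent of $\mc{F}_{k-1}$. Set $\delta_k \defeq f(x_k\opt; \statrv_k) - \inf_{z \in \mc{X}} f(z; \statrv_k) \ge 0$ and $D_k \defeq \dist(x_k, \mc{X}\opt)^2$. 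Applying Lemma~\ref{lemma:trunc-recurssion} with $x\opt = x_k\opt$ gives $D_{k+1} \le \ltwo{x_{k+1} - x_k\opt}^2 \le D_k + 2\stepsize_k \delta_k$, and unrolling this recursion yields the pathwise bound
\begin{equation*}
  D_{k+1} \le D_1 + 2 \sum_{j=1}^k \stepsize_j \delta_j .
\end{equation*}

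Next I would raise this to the power $q \defeq p/2$ and take expectations (the relevant regime is $q \ge 1$; for $p < 2$ one argues similarly via concave Jensen). The second half of Assumption~\ref{assumption:polynomial-growth} bounds $\E[(f(x\opt;\statrv) - \inf_z f(z;\statrv))^q] \le C$ uniformly over $x\opt \in \mc{X}\opt$, so conditioning on $\mc{F}_{j-1}$ and using independence of $\statrv_j$ gives $\E[\delta_j^q \mid \mc{F}_{j-1}] \le C$ and hence $\E[\delta_j^q] \le C$. Writing $S_k \defeq \sum_{j=1}^k \stepsize_j$, I use convexity of $t \mapsto t^q$ twice: first $(D_1 + 2\sum_j \stepsize_j \delta_j)^q \le 2^{q-1} D_1^q + 2^{2q-1} (\sum_j \stepsize_j \delta_j)^q$, and then the weighted power-mean inequality with weights $\stepsize_j / S_k$ gives $(\sum_j \stepsize_j \delta_j)^q \le S_k^{q-1} \sum_j \stepsize_j \delta_j^q$; taking expectations leaves $\E[(\sum_j \stepsize_j \delta_j)^q] \le C S_k^q$. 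Since $\stepsize_k = \stepsize_0 k^{-\steppow}$ with $\steppow < 1$, we have $S_k = \stepsize_0 \sum_{j=1}^k j^{-\steppow} \le c\, k^{1-\steppow}$ for a numerical constant $c$, so $S_k^q \le c^q k^{q(1-\steppow)} = c^q k^{p(1-\steppow)/2}$. Combining the displays, $\E[D_{k+1}^q] = \E[\dist(x_{k+1}, \mc{X}\opt)^p] \le C' + C'' k^{p(1-\steppow)/2}$ for finite constants $C', C''$.

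Finally, since $x_k \in \mc{F}_{k-1}$ and $\statrv$ is independent, the first half of Assumption~\ref{assumption:polynomial-growth} gives $\E[\ltwo{f'(x_k;\statrv)}^2] \le C(1 + \E[\dist(x_k, \mc{X}\opt)^p])$; substituting the previous bound (and absorbing the harmless index shift $k-1 \to k$ together with all constants) yields $\E[\ltwo{f'(x_k;\statrv)}^2] \le C_1 + C_2 k^{p(1-\steppow)/2}$, as claimed. The only delicate points are ensuring the base point $x_k\opt$ is $\mc{F}_{k-1}$-measurable so that the conditional $q$-th moment bound on $\delta_k$ applies, and carrying out the Jensen / power-mean step so that it costs only a constant factor rather than an extra power of $k$; the rest is bookkeeping.
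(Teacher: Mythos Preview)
Your proof is correct and follows essentially the same route as the paper: unroll the one-step bound of Lemma~\ref{lemma:trunc-recurssion}, apply the weighted Jensen (power-mean) inequality to $(\sum_j \stepsize_j \delta_j)^{p/2}$, bound $\sum_j \stepsize_j \lesssim k^{1-\steppow}$, and finish with the growth condition in Assumption~\ref{assumption:polynomial-growth}. The only cosmetic difference is that the paper fixes a single $x\opt \in \mc{X}\opt$ throughout the recursion (so the measurability of the base point never arises), whereas you reproject at each step; both yield the same estimate.
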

\begin{proof}
  Assumption~\ref{assumption:polynomial-growth} implies that
  \begin{align*}
    \E\left[\ltwo{f'(x_k; \statrv)}^2\right] 
    & = \E\left[ \E\left[\ltwo{f'(x_k; \statrv)}^2 \mid \mc{F}_{k-1}
        \right] \right]
    \le  C(1+\E[ \dist(x_k, \mc{X}\opt)^p] ). 
  \end{align*}
  Let $f\opt_k = f(x\opt; \statrv_k) - \inf_{z\in \mc{X}} f(z;\statrv_k)$
  for shorthand.  Lemma~\ref{lemma:trunc-recurssion} implies that for any
  $x\opt \in \mc{X\opt}$,
  \begin{align*}
    \E\left[ \ltwo{x_k-x\opt}^p \right] 
    & \le \E\left[  \Big(\ltwo{x_1-x\opt}^2 
      + 2\sum_{i=1}^{k}{\stepsize_i f\opt_i  }\Big)^{p/2} \right] \\
    & \stackrel{(i)} 
    \le 2^{p/2}\ltwo{x_1-x\opt}^p 
    + 2^{p}\E\left[\bigg(\sum_{i=1}^{k}{\stepsize_i f\opt_i  }\bigg)^{p/2}
      \right]  \\
    & \stackrel{(ii)} 
    \le 2^{p/2}\ltwo{x_1-x\opt}^p 
    + 2^{p} \frac{\sum_{i=1}^{k}{\stepsize_i \E[(f\opt_i)^{p/2}]}}{
      \sum_{i=1}^{k} \stepsize_i}
    \bigg(\sum_{i=1}^{k}{\stepsize_i}\bigg)^{p/2} \\
    & \stackrel{(iii)}
    \le 2^{p/2}\ltwo{x_1-x\opt}^p 
    + C k^{\frac{p (1-\beta)}{2}}
  \end{align*}
  where inequality~$(i)$ is a consequence of the inequality $(a+b)^\ell \le
  2^\ell (a^\ell+b^\ell)$, which holds for any $a,b,\ell \ge 0$,
  inequality~$(ii)$ follows from Jensen's inequality, and inequality~$(iii)$
  follows from Assumption~\ref{assumption:polynomial-growth} and
  that $\stepsize_i \propto i^{-\steppow}$ so that
  $\sum_{i=1}^k \stepsize_i \le C k^{1 - \steppow}$.
\end{proof}

We use the preceding lemmas to prove Theorem~\ref{theorem:trunc-stability}.
Lemma~\ref{lemma:single-recentered-progress}, with the weak convexity
constant $\weakconvexfunc \equiv 0$, implies that
if $x\opt$ is the projection of $x_k$ onto $\mc{X}\opt$, then
\begin{equation*}
  \E[\ltwo{x_{k + 1} - x\opt}^2 \mid \mc{F}_{k-1}]
  \le  \dist(x_k, \mc{X}\opt)^2
  - 2\stepsize_k [F(x_k) - F(x\opt)]
  + \stepsize_k^2
  \E \left[\ltwo{{f'(x_k;\statrv_k)}}^2 \mid \mc{F}_{k-1} \right].
\end{equation*}
Let us denote $B_k = \stepsize_k^2 \E [\ltwo{{f'(x_k;\statrv_k)}}^2 \mid
  \mc{F}_{k-1}]$.  By Lemma~\ref{lemma:robbins-siegmund}, Theorem~\ref{theorem:trunc-stability}---that the $x_k$ are bounded---will follow if we can
prove that $\sum_{k=1}^{\infty} B_k < \infty$ almost surely.
As $\E[B_k] = \stepsize_k^2 \E[\ltwo{f'(x_k; \statrv_k)}^2] $,
Lemma~\ref{lemma:bounded-expectation} implies that for some $C_1,
C_2 < \infty$,
\begin{equation*}
  \E\left[ \sum_{i=1}^{k} B_i \right]
  \le C_1 \sum_{i=1}^{k}{k^{-2\beta}}
  + C_2 \sum_{i=1}^{k}{k^{-\gamma}},
\end{equation*}
where $\gamma = -p/2 +\beta(p/2+2)$. Whenever $\frac{p+2}{p+4} < \steppow <
1$, we get that $\gamma > 1$, implying that $\lim_{k\to\infty} \E[
  \sum_{i=1}^{k} B_i ] < \infty$. The monotone convergence theorem implies
that $\E[ \sum_{k=1}^{\infty} B_k ] < \infty$.  We conclude that
$\sum_{k=1}^{\infty} B_k < \infty$ almost surely, completing the proof.


\section{Proof of Theorem~\ref{theorem:convergence-basic}}
\label{sec:proof-convergence-basic}

We assume without loss of generality that $\inf_{x \in \mc{X}} F(x) = 0$. It
is enough to prove that the recursion in the next lemma holds, because
Eq.~\eqref{eqn:moreau-useful} implies that $\ltwo{x^\lambda_k - x_k} =
\ltwo{\nabla F_\lambda(x_k)} / \lambda$. The theorem will then follow from
the Robbins-Siegmund almost supermartingale convergence lemma
(Lemma~\ref{lemma:robbins-siegmund}).

\begin{lemma}
  \label{lemma:two-weak-moments}
  Let the conditions of Theorem~\ref{theorem:convergence-basic}
  hold and let $A_\lambda = 2 (C_1 \lambda^2
  + \E[(\weakconvexfunc(\statrv) - \lambda)^2])$. Then
  \begin{equation*}
    \E[ F_\lambda(x_{k+1})  \mid \mc{F}_{k-1}]
    \le (1 + A_\lambda \stepsize_k^2) F_\lambda(x_k)
    - \lambda \stepsize_k \left(\lambda - \E[\weakconvexfunc(\statrv)]\right)
    \ltwobig{x^\lambda_k-x_k}^2
    + \lambda  \stepsize_k^2 C_2.
  \end{equation*}
\end{lemma}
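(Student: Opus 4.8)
The plan is to track the Moreau envelope $F_\lambda$ along the iterates, comparing each update $x_{k+1}$ to the prox point $x^\lambda_k \defeq \prox_{F/\lambda}(x_k)$, as in the Moreau-envelope technique of~\citet{DavisDr19}; recall we may assume $\inf_{x \in \mc{X}} F(x) = 0$. Two elementary facts anchor everything: since $x^\lambda_k$ is feasible in the infimum defining $F_\lambda(x_{k+1})$, we have $F_\lambda(x_{k+1}) \le F(x^\lambda_k) + \frac{\lambda}{2}\ltwo{x^\lambda_k - x_{k+1}}^2$, whereas $F_\lambda(x_k) = F(x^\lambda_k) + \frac{\lambda}{2}\ltwo{x^\lambda_k - x_k}^2$ exactly. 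Thus it suffices to control $\frac{\lambda}{2}\E[\ltwo{x^\lambda_k - x_{k+1}}^2 \mid \mc{F}_{k-1}]$ in terms of $\ltwo{x^\lambda_k - x_k}^2$ and $F(x^\lambda_k)$, invoking $F(x^\lambda_k) \ge 0$ only at the end.

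The core estimate exploits strong convexity of the regularized model subproblem. With the proximal model~\eqref{eqn:prox-model}, the iterate $x_{k+1}$ minimizes $\phi_k(y) \defeq f_{x_k}(y;\statrv_k) + \frac{1}{2\stepsize_k}\ltwo{y - x_k}^2$ over $\mc{X}$, which by Condition~\ref{cond:convex-model} is $1/\stepsize_k$-strongly convex; evaluating the strong-convexity inequality at $y = x^\lambda_k$, multiplying by $\lambda\stepsize_k$, and using $F_\lambda(x_k) = F(x^\lambda_k) + \frac{\lambda}{2}\ltwo{x^\lambda_k - x_k}^2$ gives
\[
  F_\lambda(x_{k+1}) \le F_\lambda(x_k) + \lambda\stepsize_k\big[f_{x_k}(x^\lambda_k;\statrv_k) - f_{x_k}(x_{k+1};\statrv_k)\big] - \tfrac{\lambda}{2}\ltwo{x_{k+1} - x_k}^2 .
\]
I would then shift the linearization from $x_k$ to $x^\lambda_k$: convexity of the model gives $f_{x_k}(x^\lambda_k;\statrv_k) - f_{x_k}(x_{k+1};\statrv_k) \le -\<h_k, x_{k+1} - x^\lambda_k\>$ for any subgradient $h_k$ of $y \mapsto f_{x_k}(y;\statrv_k)$ at $x^\lambda_k$, which for the proximal model is $h_k = f'(x^\lambda_k;\statrv_k) + \weakconvexfunc(\statrv_k)(x^\lambda_k - x_k)$ with $f'(x^\lambda_k;\statrv_k) \in \partial f(x^\lambda_k;\statrv_k)$. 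Writing $x_{k+1} - x^\lambda_k = (x_{k+1} - x_k) + (x_k - x^\lambda_k)$ and completing the square in $x_{k+1} - x_k$ absorbs the term $-\frac{\lambda}{2}\ltwo{x_{k+1}-x_k}^2$ and leaves the deterministic bound
\[
  F_\lambda(x_{k+1}) \le F_\lambda(x_k) - \lambda\stepsize_k\<h_k, x_k - x^\lambda_k\> + \tfrac{\lambda\stepsize_k^2}{2}\ltwo{h_k}^2 .
\]

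Taking $\E[\cdot \mid \mc{F}_{k-1}]$ then hinges on choosing the subgradient selection well. Because $\E[\partial f(x;\statrv)] = \partial F(x)$ for weakly convex $f$ (a measurable-selection fact, applied to $f(\cdot;\statval) + \frac{\weakconvexfunc(\statval)}{2}\ltwo{\cdot}^2$), I can take $f'(x^\lambda_k;\statrv_k)$ so that $\E[f'(x^\lambda_k;\statrv_k)\mid \mc{F}_{k-1}]$ is the particular subgradient $\lambda(x_k - x^\lambda_k)\in\partial F(x^\lambda_k)$ furnished by optimality of the prox (see~\eqref{eqn:moreau-useful}). Since $x_k, x^\lambda_k \in \mc{F}_{k-1}$, the cross term becomes $\E[\<h_k, x_k - x^\lambda_k\>\mid\mc{F}_{k-1}] = \<\lambda(x_k - x^\lambda_k), x_k - x^\lambda_k\> - \E[\weakconvexfunc(\statrv)]\ltwo{x_k - x^\lambda_k}^2 = (\lambda - \E[\weakconvexfunc(\statrv)])\ltwo{x_k - x^\lambda_k}^2$, which is exactly the descent term $-\lambda\stepsize_k(\lambda - \E[\weakconvexfunc(\statrv)])\ltwo{x^\lambda_k - x_k}^2$. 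For the quadratic term, the same choice yields the decomposition $h_k = \big(f'(x^\lambda_k;\statrv_k) - \lambda(x_k - x^\lambda_k)\big) + (\weakconvexfunc(\statrv_k) - \lambda)(x^\lambda_k - x_k)$ into a conditionally mean-zero part and a deterministic part, so by $\ltwo{a+b}^2 \le 2\ltwo{a}^2 + 2\ltwo{b}^2$, the relative-noise Assumption~\ref{assumption:variance-bounded} evaluated at $x^\lambda_k$ (whence $\var(f'(x^\lambda_k;\statrv)) \le C_1\lambda^2\ltwo{x_k - x^\lambda_k}^2 + C_2$, using $\ltwo{F'(x^\lambda_k)}^2 = \lambda^2\ltwo{x_k - x^\lambda_k}^2$), and $\E[\weakconvexfunc(\statrv)^2] < \infty$,
\[
  \E[\ltwo{h_k}^2 \mid \mc{F}_{k-1}] \le 2\big(C_1\lambda^2 + \E[(\weakconvexfunc(\statrv) - \lambda)^2]\big)\ltwo{x_k - x^\lambda_k}^2 + 2C_2 = A_\lambda\ltwo{x_k - x^\lambda_k}^2 + 2C_2 .
\]
Substituting both evaluations into the deterministic bound and using $\frac{\lambda}{2}\ltwo{x_k - x^\lambda_k}^2 = F_\lambda(x_k) - F(x^\lambda_k) \le F_\lambda(x_k)$ to rewrite $\frac{\lambda A_\lambda\stepsize_k^2}{2}\ltwo{x_k - x^\lambda_k}^2 \le A_\lambda\stepsize_k^2 F_\lambda(x_k)$ then delivers the claimed recursion.

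The step I expect to be the main obstacle is exactly this relocation of the stochastic subgradient to the prox point. The naive estimate, linearizing the model at $x_k$, leaves the uncontrolled quantity $\ltwo{F'(x_k)}^2$ in the noise term, and no growth assumption in the paper bounds it by $F_\lambda(x_k)$ (already $F = |\cdot|$ fails this near its kink). Evaluating at $x^\lambda_k$ replaces it by $\lambda^2\ltwo{x_k - x^\lambda_k}^2 \le 2\lambda F_\lambda(x_k)$, which is what renders the Robbins--Siegmund machinery (Lemma~\ref{lemma:robbins-siegmund}) applicable---at the cost of the measurable-selection bookkeeping above and of using the prox's strong convexity, rather than merely the one-step inequality of Lemma~\ref{lemma:single-step-progress}, to eliminate the $\ltwo{x_{k+1}-x_k}^2$ term.
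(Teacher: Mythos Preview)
Your proof is correct and follows essentially the same route as the paper's: both apply the one-step strong-convexity inequality (Lemma~\ref{lemma:single-step-progress}) at $y = x^\lambda_k$, linearize the model at $x^\lambda_k$ via convexity plus Young's inequality (this is exactly the content of Lemma~\ref{lemma:conv-bound}, which the paper invokes by name), decompose $h_k = f'(x^\lambda_k;\statrv_k) + \weakconvexfunc(\statrv_k)(x^\lambda_k - x_k)$ around $\lambda(x^\lambda_k - x_k)$, and use Assumption~\ref{assumption:variance-bounded} for the quadratic term. The only cosmetic difference is that the paper first bounds $\ltwo{x_{k+1} - x^\lambda_k}^2$ and then multiplies by $\lambda/2$ and adds $F(x^\lambda_k)$ at the end, whereas you carry $F_\lambda$ throughout; also, in the constrained case the paper states the cross-term bound as an inequality (via the variational characterization of $x^\lambda_k$) rather than the equality you write, though the direction is the one you need.
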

\begin{proof}
	First, we have 
	\begin{align}
	\label{eqn:one-step-moreau}
	F_\lambda(x_{k+1})
	\le F(x^\lambda_{k}) + \frac{\lambda}{2} \ltwos{x^\lambda_{k}
		- x_{k+1}}^2.
	\end{align}
	Now we bound the term $\ltwos{x^\lambda_{k}- x_{k+1}}^2$. Since
	$x_{k+1}$ solves the update~\eqref{eqn:model-iteration}
	with the model $f_{x_k}$, we have by the standard optimality
	conditions for strongly convex minimization that
	\begin{align*}
	\half \ltwo{x_{k+1} - y}^2
	\le \half \ltwo{x_k - y}^2 
	+ \stepsize_k (f_{x_k}(y; \statrv_k) 
	-  f_{x_k}(x_{k+1};\statrv_k))
	- \half \ltwo{x_{k+1} - x_k}^2 .
	\end{align*} 
	Now, we take $y = x^\lambda_k$ in the last inequality and apply
	Lemma~\ref{lemma:conv-bound} with $x_1 = x_{k+1}$, $x_0 = x_k$, $y =
	x^\lambda_k$, and $\beta = \stepsize_k$ to find
	\begin{equation}
	\label{eqn:half-a-moreau-recursion}
	\half \ltwo{x_{k + 1} - x^\lambda_k}^2
	\le \half \ltwo{x_k - x^\lambda_k}^2
	+ \stepsize_k \<f'_{x_k}(x^\lambda_k; \statrv_k), x^\lambda_k - x_k\>
	+ \frac{\stepsize_k^2}{2}
	\ltwo{f'_{x_k}(x^\lambda_k; \statrv_k)}^2
	\end{equation}
	for all $f'(x^\lambda_k; \statrv_k) \in \partial f(x^\lambda_k;
	\statrv_k)$. But since $f_{x_k}(x; \statrv_k) = f(x; \statrv_k) +
	\frac{\weakconvexfunc(\statrv_k)}{2} \ltwo{x-x_k}^2$, we have that
	\begin{align*}
	\<f'_{x_k}(x^\lambda_k; \statrv_k), x^\lambda_k - x_k\>
	& =  \<f'(x^\lambda_k; \statrv_k) + \weakconvexfunc(\statrv_k) (x^\lambda_k-x_k), x^\lambda_k - x_k\> \\
	& = \<f'(x^\lambda_k; \statrv_k) + \lambda(x^\lambda_k-x_k),  x^\lambda_k - x_k\> 
	+ (\weakconvexfunc(\statrv_k) - \lambda) \ltwo{x^\lambda_k-x_k}^2  
	\end{align*}
	Taking expectations, the definition of $x_k^\lambda$ as the minimizer of
	$F(x) + \frac{\lambda}{2} \ltwo{x - x_k}^2$ over $\mc{X}$ implies that
	w.l.o.g.\ we have $\E[f'(x_k^\lambda; \statrv_k) \mid \mc{F}_{k-1}] =
	F'(x_k^\lambda)$ for the element $F'(x_k^\lambda) \in \partial
	F(x_k^\lambda)$ satisfying $\<F'(x_k^\lambda) + \lambda(x_k^\lambda -
	x_k), y - x_k^\lambda\> \ge 0$ for all $y \in \mc{X}$. Thus we obtain that
	\begin{equation}
	\label{eqn:eq1}
	\E[\<f'_{x_k}(x^\lambda_k; \statrv_k), x^\lambda_k - x_k\> \mid \mc{F}_{k-1}]
	\le \E[\weakconvexfunc(\statrv) - \lambda ] \ltwobig{x^\lambda_k-x_k}^2.
	\end{equation}
	Moreover, we have
	\begin{align}
	\ltwo{f'_{x_k}(x^\lambda_k; \statrv_k)}^2
	& = \ltwo{f'(x^\lambda_k; \statrv_k) + \weakconvexfunc(\statrv_k) (x^\lambda_k-x_k)}^2 \nonumber  \\
	& = \ltwo{f'(x^\lambda_k; \statrv_k) + \lambda(x^\lambda_k-x_k) + (\weakconvexfunc(\statrv_k) - \lambda) (x^\lambda_k-x_k)}^2 \nonumber  \\
	& \le 2 \ltwo{f'(x^\lambda_k; \statrv_k) + \lambda(x^\lambda_k-x_k)}^2
	+ 2 (\weakconvexfunc(\statrv_k) - \lambda)^2 \ltwo{x^\lambda_k-x_k}^2.
	\label{eqn:controlling-f-prime-fun}
	\end{align}
	Combining Assumption~\ref{assumption:variance-bounded}
	with the fact that $\E[f'(x_k^\lambda; \statrv_k) \mid \mc{F}_{k-1}]
	= F'(x_k^\lambda) = -\lambda(x_k^\lambda - x_k)$,
	we have that
	\begin{equation*}
	\E[\ltwos{f'(x^\lambda_k; \statrv_k)
		+ \lambda (x^\lambda_k - x_k)}^2 \mid \mc{F}_{k-1}]
	\le C_1 \ltwos{F'(x_k^\lambda)}^2 + C_2
	= C_1 \lambda^2 \ltwobig{x_k^\lambda - x_k}^2 + C_2.
	\end{equation*}
	Substituting this bound into inequality~\eqref{eqn:controlling-f-prime-fun}
	gives
	\begin{equation}
	\E\left[\ltwobig{f_{x_k}'(x^\lambda_k; \statrv_k)}^2 \mid \mc{F}_{k-1}\right]
	\le 2 \left(C_1 \lambda^2 + \E[(\weakconvexfunc(\statrv_k)
	- \lambda)^2 \mid \mc{F}_{k-1}]\right)
	\ltwobig{x_k^\lambda - x_k}^2
	+ 2 C_2.
	\label{eqn:eq2}
	\end{equation}
	
	Now we derive our final recursion.  Taking expectations in the
	bound~\eqref{eqn:half-a-moreau-recursion} and applying
	inequalities~\eqref{eqn:eq1} and~\eqref{eqn:eq2}
	with the choice $A = 2 (C_1 \lambda^2 +
	\E[(\weakconvexfunc(\statrv) - \lambda)^2])$, we get
	\begin{equation*}
	\E\left[\ltwo{x_{k + 1} - x^\lambda_k}^2 \mid \mc{F}_{k-1}\right]
	\le  \left( 1 + \stepsize_k^2 A \right) \ltwobig{x_k - x^\lambda_k}^2
	+ 2 \stepsize_k
	\left[\wb{\weakconvexfunc} - \lambda \right]\ltwobig{x^\lambda_k-x_k}^2
	+ 2 C_2 \stepsize_k^2.
	\end{equation*}
	Using the convexity bound~\eqref{eqn:one-step-moreau}, we
	thus have
	\begin{align*}
	\E\left[F_\lambda(x_{k+1}) \mid \mc{F}_{k-1}\right]
	& \le F(x^\lambda_{k}) + \frac{\lambda}{2}
	\E\left[\ltwobig{x^\lambda_{k} - x_{k+1}}^2 \mid \mc{F}_{k-1}\right] \\
	& \le F(x^\lambda_{k}) + \frac{\lambda}{2}
	\left( 1 + \stepsize_k^2 A \right) \ltwobig{x_k - x^\lambda_k}^2
	+ \lambda \stepsize_k \left[\wb{\weakconvexfunc} - \lambda\right]
	\ltwobig{x^\lambda_k-x_k}^2
	+ \lambda C_2 \stepsize_k^2 \\
	& \le \left( 1 + \stepsize_k^2 A \right) \left( F(x^\lambda_{k})
	+ \frac{\lambda}{2} \ltwobig{x_k - x^\lambda_k}^2 \right)
	+ \lambda \stepsize_k
	\left[\wb{\weakconvexfunc} - \lambda \right] \ltwobig{x^\lambda_k-x_k}^2
	+ \lambda C_2 \stepsize_k^2.
	\end{align*}
	Using that $F_\lambda(x) = F(x) + \frac{\lambda}{2} \ltwos{x^\lambda - x}^2$
	gives the lemma.
\end{proof}

\section{Proof of
  Proposition~\ref{proposition:convergence-from-boundedness-weakly-convex}}

Throughout the proof, let
$\wb{\weakconvexfunc}=\E[\weakconvexfunc(\statrv)]$ be the expected weak
convexity constant.

We prove the first claim of the proposition.
By Lemma~\ref{lemma:single-recentered-progress}, we have
\begin{align*}
  \lefteqn{F_\lambda(x_{k+1})
    \le F(x^{\lambda}_k) + \frac{\lambda}{2}
    \ltwos{x^\lambda_k - x_{k+1}}^2} \\
  & \le F(x^\lambda_k) + \frac{\lambda}{2}
  \ltwos{x_k^\lambda - x_k}^2
  - \lambda \stepsize_k \left[f(x_k; \statrv_k) - f(x^\lambda_k; \statrv_k)
    - \frac{\weakconvexfunc(\statrv_k)}{2} \ltwos{x_k - x_k^\lambda}^2\right]
  + \frac{\lambda \stepsize_k^2}{2} \ltwos{f'(x_k; \statrv_k)}^2 \\
  & = F_\lambda(x_k) - \lambda
  \stepsize_k\left[f(x_k; \statrv_k) - f(x^\lambda_k; \statrv_k)
    - \frac{\weakconvexfunc(\statrv_k)}{2} \ltwos{x_k - x_k^\lambda}^2\right]
  + \frac{\lambda \stepsize_k^2}{2} \ltwos{f'(x_k; \statrv_k)}^2.
\end{align*}
Now, taking expectations conditional on $\mc{F}_{k-1}$, we have
\begin{align*}
  \E[F_\lambda(x_{k+1}) \mid \mc{F}_{k-1}]
  & \le F_\lambda(x_k^\lambda)
  - \lambda \stepsize_k \left[F(x_k) - F(x_k^\lambda)
    - \frac{\wb{\weakconvexfunc}}{2} \ltwos{x_k - x_k^\lambda}^2\right]
  + \frac{\lambda \stepsize_k^2}{2} \growfunc(\ltwo{x_k}),
\end{align*}
where we have used Assumption~\ref{assumption:weak-second-moment}
and the definition of $\growfunc$.
Adding and subtracting
$\frac{\lambda \stepsize_k}{2} \ltwos{x_k - x_k^\lambda}^2
= \frac{\stepsize_k}{2 \lambda} \ltwos{\nabla F_\lambda(x_k)}^2$, we have
\begin{align*}
  \lefteqn{\E[F_\lambda(x_{k+1}) \mid \mc{F}_{k-1}]} \\
  & \le F_\lambda(x_k^\lambda)
  - \lambda \stepsize_k \left[F(x_k) - F_\lambda(x_k)\right]
  + \frac{(\wb{\weakconvexfunc} - \lambda) \stepsize_k}{2 \lambda}
  \ltwo{\nabla F_\lambda(x_k)}^2
  + \frac{\lambda \stepsize_k^2}{2} \growfunc(\ltwo{x_k}).
\end{align*}
As $\sup_k \ltwo{x_k} < \infty$ by assumption,
the final term above satisfies
$\growfunc(\ltwo{x_k}) \le B < \infty$ for some (random) $B$ for all $k$.
The
martingale convergence Lemma~\ref{lemma:robbins-siegmund} then
gives that $\sum_k \stepsize_k \ltwo{\nabla F_\lambda(x_k)}^2 < \infty$
with probability 1, establishing the convergence
guarantees~\eqref{eqn:boundedness-to-stationary-convergence}.

Now for the second claim of
Proposition~\ref{proposition:convergence-from-boundedness-weakly-convex},
which relies on Assumption~\ref{assumption:weak-sard}.
We
collect a few useful results. First, we note that
$F_\lambda(\mc{X}\stationary) = F(\mc{X}\stationary)$ for all $\lambda \ge
\wb{\weakconvexfunc}$, as any $x \in \mc{X}\stationary$ is a minimizer of $y
\mapsto F(y) + \frac{\lambda}{2} \ltwo{y - x}^2$.
Second, we claim that
\begin{equation}
  \label{eqn:convergence-to-stationary-values}
  \dist(F_\lambda(x_k), F(\mc{X}\stationary)) \cas 0.
\end{equation}
For the sake of contradiction, assume that this fails, noting that we have
$F_\lambda(x_k) \cas G_\lambda$ for some $G_\lambda < \infty$.  We know
that $x \mapsto \nabla F_\lambda(x)$ is a continuous function, and so over
compacta, $\norm{\nabla F_\lambda(x)}$ achieves its minimum.  Let $B <
\infty$ be such that $\norm{x_k} < B$ for all $k$ ($B$ may be random, but
it exists). Then $\dist(F_\lambda(x_k), F(\mc{X}\stationary)) \to \delta >
0$. As $y \mapsto \dist(F_\lambda(y), F(\mc{X}\stationary))$ is
continuous, the pre-image $Y_\delta \defeq \{y \in \mc{X} \mid \norm{y}
\le B, \dist(F_\lambda(y), F(\mc{X}\stationary)) \ge \delta/2\}$ is
compact.  Then $\norm{\nabla F_\lambda(y)}$ achieves its infimum over
$Y_\delta$, which must be positive (as otherwise there would exist a
stationary point in $Y_\delta$, a contradiction). But then for $K$ large
enough that $\dist(F_\lambda(x_k), F(\mc{X}\stationary)) > \delta/2$ for
$k \ge K$, we have
\begin{equation*}
  \sum_{k \ge K} \stepsize_k \ltwo{\nabla F_\lambda(x_k)}^2
  \ge \sum_{k \ge K} \stepsize_k \inf_{y \in Y_\delta} \ltwo{\nabla
    F_\lambda(y)}^2.
\end{equation*}
The first sum is finite by the first part of the proposition, a
contradiction to the fact that $\sum_k \stepsize_k = \infty$ and $\inf_{y
  \in Y_\delta} \ltwos{\nabla F_\lambda(y)}^2 > 0$.

Now, consider the limiting value $G(\lambda) = \lim_k F_\lambda(x_k)$.
Then for any limit point $x_\infty$ of $x_k$, we have $G(\lambda) =
F_\lambda(x_\infty)$, and thus $G(\lambda) \in F(\mc{X}\stationary)$.  As
by assumption $F(\mc{X}\stationary)$ has measure zero, it must be the case
that $G(\lambda)$ is constant in $\lambda$.  That is, $x_\infty$ satisfies
\begin{equation*}
  \inf_{x \in \mc{X}} \left\{F(x) + \frac{\lambda}{2} \ltwo{x - x_\infty}^2
  \right\} = G
\end{equation*}
for some fixed $G$. We now make the following claim.
\begin{lemma}
  \label{lemma:constant-envelopes-means-min}
  Let $h$ be convex and assume that $h_\lambda(x_0)
  = \inf_x \{h(x) + \frac{\lambda}{2} \ltwo{x - x_0}^2\}$ is constant
  in $\lambda$. Then $x_0$ minimizes $h$.
\end{lemma}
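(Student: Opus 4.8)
The plan is to exploit the fact that the map $\lambda \mapsto h_\lambda(x_0)$ is nondecreasing and concave in $\lambda$: it is the pointwise infimum over $x$ of the functions $\lambda \mapsto h(x) + \frac{\lambda}{2}\ltwo{x - x_0}^2$, each of which is affine in $\lambda$ with nonnegative slope $\frac12\ltwo{x-x_0}^2$. Constancy of this concave function therefore forces its ``slopes''---which are governed by $\frac12\ltwo{x^\lambda - x_0}^2$ for $x^\lambda$ a minimizer---to vanish. Concretely, I would fix $0 < \lambda_1 < \lambda_2$ and let $x^{\lambda_2} \defeq \argmin_{x} \{h(x) + \frac{\lambda_2}{2}\ltwo{x - x_0}^2\}$, which exists and is unique since the objective is strongly convex and lower semicontinuous. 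Then plugging the particular point $x^{\lambda_2}$ into the $\lambda_1$-problem and using $\lambda_1 < \lambda_2$,
\begin{equation*}
  h_{\lambda_1}(x_0)
  \le h(x^{\lambda_2}) + \frac{\lambda_1}{2}\ltwo{x^{\lambda_2} - x_0}^2
  \le h(x^{\lambda_2}) + \frac{\lambda_2}{2}\ltwo{x^{\lambda_2} - x_0}^2
  = h_{\lambda_2}(x_0).
\end{equation*}
Since $h_{\lambda_1}(x_0) = h_{\lambda_2}(x_0)$ by hypothesis, both inequalities are equalities, and the strict inequality $\lambda_1 < \lambda_2$ forces $\ltwo{x^{\lambda_2} - x_0} = 0$; that is, $x^{\lambda_2} = x_0$ and hence $h_{\lambda_2}(x_0) = h(x_0)$.

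It remains to turn $h_{\lambda_2}(x_0) = h(x_0)$ into the statement that $x_0$ minimizes $h$. Rewriting, this says $h(x) + \frac{\lambda_2}{2}\ltwo{x - x_0}^2 \ge h(x_0)$ for all $x$. I would finish with a short convexity argument: for an arbitrary $y$ and $t \in (0,1)$, apply the inequality at $x = x_0 + t(y - x_0)$ and use $h(x_0 + t(y-x_0)) \le (1-t)h(x_0) + t\,h(y)$ to obtain $t\,h(y) + \frac{\lambda_2 t^2}{2}\ltwo{y - x_0}^2 \ge t\,h(x_0)$; dividing by $t$ and letting $t \downarrow 0$ gives $h(y) \ge h(x_0)$, so $x_0$ minimizes $h$. (Equivalently, $x^{\lambda_2} = x_0$ says $0 \in \partial\big(h + \tfrac{\lambda_2}{2}\ltwo{\cdot - x_0}^2\big)(x_0) = \partial h(x_0)$, and convexity does the rest.)

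The only points needing a word of care are bookkeeping rather than genuine obstacles. First, one should justify that the perturbed problem defining $x^{\lambda_2}$ attains its minimum: this holds because $h$ is convex and lower semicontinuous and the added quadratic makes the objective coercive, and the degenerate case $\inf_x h = -\infty$ cannot occur---if it did, then $h_\lambda(x_0) \downarrow -\infty$ as $\lambda \downarrow 0$, contradicting the fact that $h_\lambda(x_0)$ is a (finite) constant. Second, it is essential to use the strict inequality $\lambda_1 < \lambda_2$ when cancelling the $\ltwo{x^{\lambda_2} - x_0}^2$ term. The heart of the argument is the one-line sandwich displayed above, so I do not anticipate any real difficulty.
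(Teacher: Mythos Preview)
Your proof is correct and, in fact, a bit more elementary than the paper's. The paper argues by contradiction via directional derivatives: it first uses $h_\lambda(x_0)\uparrow h(x_0)$ as $\lambda\to\infty$ to identify the constant value as $h(x_0)$, then assumes some direction $d$ has $h'(x_0;d)<0$ and, by a first-order expansion with the specific choice $t=-h'(x_0;d)/\lambda$, produces $h_\lambda(x_0)\le h(x_0)-\frac{1}{2\lambda}|h'(x_0;d)|^2+o(\lambda^{-1})<h(x_0)$, contradicting constancy. Your route instead exploits the affine-in-$\lambda$ structure of the envelope directly: the sandwich $h_{\lambda_1}(x_0)\le h(x^{\lambda_2})+\tfrac{\lambda_1}{2}\ltwo{x^{\lambda_2}-x_0}^2\le h_{\lambda_2}(x_0)$ with equality forces $x^{\lambda_2}=x_0$, after which $0\in\partial h(x_0)$ is immediate. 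Your argument avoids the directional-derivative machinery and the limiting $\lambda\to\infty$ step, needing only two fixed values $\lambda_1<\lambda_2$; the paper's argument, on the other hand, makes the first-order obstruction to optimality explicit. Both are short, and your bookkeeping remarks (existence of $x^{\lambda_2}$, ruling out $\inf h=-\infty$) are appropriate.
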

\noindent
Deferring the proof of Lemma~\ref{lemma:constant-envelopes-means-min}
temporarily, note that $y \mapsto F(y) + \frac{\lambda}{2} \ltwo{y -
  x_\infty}^2$ is convex for large enough $\lambda$. But
of course $F_\lambda(x_\infty)$ is constant in $\lambda$, and thus
$x_\infty$ must minimize $F(x) + \frac{\lambda}{2} \ltwo{x - x_\infty}^2$,
so that $x_\infty$ is stationary.

Summarizing, we have shown that all subsequences of $x_k$ have a further
subsequence that converges to some $x_\infty \in \mc{X}\stationary$, and thus
$\dist(x_k, \mc{X}\stationary) \cas 0$.

\begin{proof-of-lemma}[\ref{lemma:constant-envelopes-means-min}]
  Let $h'(x_0; d) = \lim_{t \downarrow 0} \frac{h(x_0 + td) - h(x_0)}{t}$
  be the directional derivative of $h$, which exists and satisfies $h(x_0
  + td) - h(x_0) - t h'(x_0; d) = o(t)$ as $t \downarrow 0$
  (cf.~\cite[Lemma VI.2.2.1]{HiriartUrrutyLe93ab}). Moreover, $x_0$
  minimizes $h$ if and only if $h'(x_0; d) \ge 0$ for all $d \in \R^n$,
  and $\lim_{\lambda \uparrow \infty} h_\lambda(x) = h(x)$ for all $x$
  (cf.~\cite[Proposition XV.4.1.5]{HiriartUrrutyLe93ab}). Now, let $d$
  satisfy $\ltwo{d} = 1$, and assume for the sake of contradiction that
  $h'(x_0; d) < 0$. Then
  \begin{equation*}
    h(x_0 + td) + \frac{\lambda}{2} t^2 \ltwo{d}^2
    = h(x_0) + t h'(x_0; d) + \frac{\lambda}{2} t^2
    + o(t),
  \end{equation*}
  and taking $t = -\frac{1}{\lambda} h'(x_0; d)$ and letting $\lambda
  \uparrow \infty$, we have
  \begin{equation*}
    h_\lambda(x_0) \le
    h(x_0 + td) + \frac{\lambda}{2} t^2
    = h(x_0) - \frac{1}{2 \lambda} |h'(x_0; d)|^2 + o(\lambda^{-1})
    < h(x_0),
  \end{equation*}
  which contradicts that $h_\lambda(x_0)$ is constant in $\lambda$.
\end{proof-of-lemma}


\section{Proofs of fast convergence on easy problems}	

\subsection{Proof of Lemma~\ref{lemma:shared-min-progress}}
We assume without loss of generality that $f(x\opt;\statval) = 0$ for
all $x\opt \in \mc{X}\opt$, as we may replace $f$ with $f - \inf f$.
Lemma 3.3 from~\cite{AsiDu18} implies
\begin{equation*}
  \half \ltwo{x_{k+1} - x\opt}^2
  \le \half \ltwo{x_k - x\opt}^2
  + \stepsize_k \left[f_{x_k}(x\opt;\statrv_k)
    - f_{x_k}(x_{k+1}; \statrv_k)\right] - \half \ltwo{x_{k+1} - x_k}^2.
\end{equation*}
For shorthand, let $g_k = f'(x_k;\statrv_k)$ and $f_k = f(x_k;\statrv_k)$.
Now, note that
$f_{x_k}(x\opt; \statrv_k) \le f(x\opt; \statrv_k) + \frac{\weakconvexfunc(\statrv_k)}{2}\ltwo{x_k - x\opt}^2 $,
and by Condition~\ref{cond:lower-by-optimal} we have
\begin{equation*}
  f_{x_k}(x_{k+1}; \statrv_k)
  \ge \left[f_{x_k}(x_k; \statrv_k)
    + \<g_k, x_{k+1} - x_k\>\right]
  \vee \inf_{x\opt \in \mc{X}} f(x\opt; \statrv_k)
  = \hinge{f_k + \<g_k, x_{k+1} - x_k\>}.
\end{equation*}
Thus we have
\begin{align}
  \label{eqn:single-step-with-hinge}
  \half & \ltwo{x_{k+1} - x\opt}^2 \\
  & \le \frac{1 + \stepsize_k \weakconvexfunc(\statrv_k)}{2}
  \ltwo{x_k - x\opt}^2
  + \stepsize_k \left[f(x\opt;\statrv_k)
    - \hinge{f_k + \<g_k, x_{k+1} - x_k\>} \right]
  - \half \ltwo{x_{k+1} - x_k}^2. \nonumber
\end{align}

If we let $\wt{x}_{k+1}$ denote the unconstrained minimizer
\begin{equation*}
  \wt{x}_{k+1} = \argmin\left\{\hinge{f_k + \<g_k, x - x_k\>}
  + \frac{1}{2\stepsize_k} \ltwo{x - x_k}^2\right\}
  = x_k - \lambda_k g_k
  ~~ \mbox{for}~~
  \lambda_k \defeq \min\left\{\stepsize_k, \frac{f_k}{
    \ltwo{g_k}^2}\right\},
\end{equation*}
then because $x_{k+1} \in \mc{X}$ we have
\begin{equation*}
  -\stepsize_k f_{x_k}(x_{k+1};\statrv_k)
  - \half \ltwo{x_{k+1} - x_k}^2
  \le -\stepsize_k f_{x_k}(\wt{x}_{k+1}; \statrv_k)
  - \half \ltwo{\wt{x}_{k+1} - x_k}^2.
\end{equation*}
By inspection, the guarded stepsize $\lambda_k$ guarantees that
$\hinge{f_k + \<g_k, \wt{x}_{k+1} - x_k\>} = f_k - \lambda_k
\ltwo{g_k}^2$, and thus inequality~\eqref{eqn:single-step-with-hinge}
(setting $f(x\opt;\statrv_k) = 0$) yields
\begin{align*}
  \half \ltwo{x_{k+1} - x\opt}^2
  & \le \frac{1 + \stepsize_k \weakconvexfunc(\statrv_k)}{2}
  \ltwo{x_k - x\opt}^2
  - \stepsize_k f_{x_k}(\wt{x}_{k+1}; \statrv_k)
  - \half \ltwos{\wt{x}_{k+1} - x_k}^2 \\
  & = \frac{1 + \stepsize_k \weakconvexfunc(\statrv_k)}{2}
  \ltwo{x_k - x\opt}^2
  - \lambda_k f_k
  + \frac{\lambda_k^2}{2} \ltwo{g_k}^2.
\end{align*}
We have two possible cases: whether
$f_k / \ltwo{g_k}^2 \lessgtr \stepsize_k$.
In the case that $f_k / \ltwo{g_k}^2 \le \stepsize_k$,
we have $\lambda_k = f_k / \ltwo{g_k}^2$ and so
$-\lambda_k f_k + \lambda_k^2 \ltwo{g_k}^2 / 2 =
- f_k^2 / (2 \ltwo{g_k}^2)$. In the alternative
case that $f_k / \ltwo{g_k}^2 > \stepsize_k$, we have
$\lambda_k = \stepsize_k$ and
$\stepsize_k^2 \ltwo{g_k}^2 / 2 \le
\stepsize_k f_k / 2$. Combining these cases
gives
\begin{equation*}
  \half \ltwo{x_{k+1} - x\opt}^2
  \le \frac{1 + \stepsize_k \weakconvexfunc(\statrv_k)}{2}
  \ltwo{x_k - x\opt}^2
  - \half \min\left\{\stepsize_k f_k,
  \frac{f_k^2}{\ltwo{g_k}^2}\right\},
\end{equation*}
which is the desired result.

\subsection{Proof of
  Proposition~\ref{proposition:sharp-growth-convex-convergence}}

Let $D_k = \dist(x_k, \mc{X}\opt)$ for shorthand throughout this proof.
Lemma~\ref{lemma:shared-min-progress} implies that
\begin{align*}
  \E[D_{k+1}^2 \mid \mc{F}_{k-1}]
  & \le (1 + \stepsize_k \wb{\weakconvexfunc}) D_k^2
  - \min\left\{\lambda_0 \stepsize_k D_k,
  \lambda_1 D_k^2 \right\}.
\end{align*}
Let $\epsilon > 0$ be arbitrary. Then
\begin{align}
  \lefteqn{\E[D_{k+1}^2 \mid \mc{F}_{k-1}]
    \indic{\max\{D_m, D_{m+1}, \ldots, D_k\}
      \le \frac{\lambda_0}{(1 + \epsilon) \wb{\weakconvexfunc}}}}
  \nonumber \\
  & \le \max\{(1 + \stepsize_k(\wb{\weakconvexfunc} - \lambda_0 / D_k)),
  1 - \lambda_1 \} D_k^2
  \indic{\max\{D_m, D_{m+1}, \ldots, D_k\}
    \le  \frac{\lambda_0}{(1 + \epsilon) \wb{\weakconvexfunc}}}
  \nonumber \\
  & \le \max\{1 - \epsilon \wb{\weakconvexfunc}
  \stepsize_k, 1 - \lambda_1\}
  D_k^2 \indic{\max\{D_m, D_{m+1}, \ldots, D_{k-1}\}
    \le \frac{\lambda_0}{(1 + \epsilon) \wb{\weakconvexfunc}}}.
  \label{eqn:one-step-weak-contraction}
\end{align}
In particular, taking $K_0 \ge m$ to be the smallest integer such that
$k \ge K_0$ implies $\epsilon \wb{\weakconvexfunc} \stepsize_k
\le \lambda_1$,
\begin{equation*}
  K_0 =
  \floor{\left(\frac{\epsilon \wb{\weakconvexfunc} \stepsize_0}{\lambda_1}
    \right)^{1/\beta}} \vee m,
\end{equation*}
we obtain that
\begin{align*}
  \lefteqn{\E\left[D_{k+1}^2 \indic{
	\max\{D_m, \ldots, D_k\} \le (1 + \epsilon)^{-1} \lambda_0
	/ \wb{\weakconvexfunc}}\right]} \\
  & \le \E[D_m^2 \indic{D_m \le (1 + \epsilon)^{-1} \lambda_0
      / \wb{\weakconvexfunc}}]
  \exp\left(-\lambda_1 \hinge{\min\{k, K_0\} - m}
  - \epsilon \wb{\weakconvexfunc} \sum_{i = K_0 + 1}^k \stepsize_i \right) \\
  & \le \frac{\lambda_0^2}{(1 + \epsilon)^2 \wb{\weakconvexfunc}^2}
  \exp\left(-\lambda_1 \hinge{\min\{k, K_0\} - m}
  - \epsilon \wb{\weakconvexfunc} \sum_{i = K_0 + 1}^k \stepsize_i \right).
\end{align*}

Now, we give an argument analogous to our proof of linear convergence for
convex problems~\cite[Proposition~2]{AsiDu18}.
We have for any $\epsilon > 0$, $\delta > 0$ that
\begin{align*}
  \sum_{k = 1}^\infty
  \P\left(D_k \indic{\max_{m \le i < k}\{D_i\}
    \le
    \frac{\lambda_0}{(1 + \epsilon) \wb{\weakconvexfunc}}}
  \ge \delta \stepsize_k\right)
  & \le \frac{1}{\delta} \sum_{k = 1}^\infty \exp\left(
  C
  - c \wb{\weakconvexfunc} \epsilon k^{1 - \steppow}
  + \log \frac{1}{\stepsize_k}\right) < \infty.
\end{align*}
The Borel-Cantelli lemma
thus implies that for any $m \in \N$
\begin{equation}
  \label{eqn:poster-session}
  \frac{1}{\stepsize_k} D_k \indic{
    \max\{D_m, \ldots, D_{k-1}\}
    \le \frac{\lambda_0}{(1 + \epsilon) \wb{\weakconvexfunc}}}
  \cas 0.
\end{equation}

For the assertion of the proposition,
for each $m \in \N$ define the
random variables
\begin{equation*}
  V_{k,m} \defeq \frac{1}{(1 - \lambda_1)^{k+1}} D_{k+1}^2
  \cdot \indic{\max_{m \le i \le k} \{D_i\}
    \le \frac{\lambda_0}{(1 + \epsilon) \wb{\weakconvexfunc}}}.
\end{equation*}
We again trace the argument for the proof
of Proposition~2 of the paper~\cite{AsiDu18};
we have by inequality~\eqref{eqn:one-step-weak-contraction} that
\begin{align*}
  \E[V_{k,m} \mid \mc{F}_{k-1}]
  & \le \frac{D_k^2}{(1 - \lambda_1)^k}
  \max\left\{1, \frac{1 - \stepsize_k / D_k}{1 - \lambda_1}\right\}
  \indic{\max_{m \le i < k} \{D_i\} \le \frac{\lambda_0}{(1 + \epsilon)
      \wb{\weakconvexfunc}}}.
\end{align*}
But the convergence~\eqref{eqn:poster-session} guarantees that
$D_k < \stepsize_k$ when $\max_{m \le i < k} \{D_i \} \le
\frac{\lambda_0}{(1 + \epsilon) \wb{\weakconvexfunc}}$, at least for
all large enough $k$, so that
$\E[V_{k,m} \mid \mc{F}_{k-1}] \le V_{k-1,m} + E_{k-1,m}$, where $E_{k,m} = 0$
eventually (with probability $1$). The
Robbins-Siegmund lemma~\ref{lemma:robbins-siegmund} completes the proof.

\bibliography{bib}
\bibliographystyle{abbrvnat}

\end{document}